\theoremstyle{plain}
\newtheorem{theorem}{Theorem}[section] 
\newtheorem{proposition}[theorem]{Proposition}
\newtheorem{corollary}[theorem]{Corollary}
\newtheorem{lemma}[theorem]{Lemma}
\theoremstyle{remark}
\crefname{thm}{Theorem}{Theorems}
\crefname{lem}{Lemma}{Lemmas}
\crefname{prop}{Proposition}{Propositions}
\crefname{cor}{Corollary}{Corollaries}
\def\EE{\mathbb{E}}
\def\NN{\mathbb{N}}
\def\PP{\mathbb{P}}
\def\RR{\mathbb{R}}
\def\1{\mathds{1}}
\def\A{\mathcal{A}}
\def\B{\mathcal{B}}
\def\C{\mathcal{C}}
\def\G{\mathcal{G}}
\def\N{\mathcal{N}}
\newcommand{\Tr}{\mbox{$\mathrm{Tr}$}}
\newcommand{\diag}{\mbox{$\mathrm{diag}$}}
\newcommand{\Cond}{\mbox{$\mathrm{Cond}$}}
\def\grad{\nabla} 
\DeclareMathOperator*{\argmin}{argmin}
\renewcommand{\hat}{\widehat}
\renewcommand{\tilde}{\widetilde}
\renewcommand{\bar}{\overline }
\newlist{hypDiamond}{enumerate}{1} 
\setlist[hypDiamond,1]{label={$\diamond$}}
\newcommand*{\rom}[1]{\mbox{\expandafter\@slowromancap\romannumeral #1@}}
\def\I{\rom{1}}
\def\II{\rom{2}}
\def\III{\rom{3}}
\patchcmd{\@maketitle}{\LARGE \@title}{\fontsize{16}{19.2}\selectfont\@title}{}{}
\def\namedlabel#1#2{\begingroup
   \def\@currentlabel{#2}%
   \label{#1}\endgroup
}
\newsavebox\affbox
\author[1]{\textbf{Luca Castelli}}
\author[2]{\textbf{Irène Gannaz}}
\author[1]{\textbf{Clément Marteau}}
\affil[1]{Univ Lyon, Université Claude Bernard Lyon 1, CNRS UMR 5208, Institut Camille Jordan, F-69622 Villeurbanne, France\protect\\~ } 
\affil[2]{Univ. Grenoble Alpes, CNRS, Grenoble INP\footnote{Institute of Engineering Univ. Grenoble Alpes}, G-SCOP, 38000 Grenoble, France}
\titleformat{\section}{\normalfont\fontsize{10}{15}\bfseries}{\thesection.}{1em}{}
\titleformat{\subsection}{\normalfont\fontsize{10}{15}\bfseries}{\thesubsection.}{1em}{}
\titleformat{\subsubsection}{\normalfont\fontsize{10}{15}\bfseries}{\thesubsubsection.}{1em}{}
\titleformat{\author}{\normalfont\fontsize{10}{15}\bfseries}{\thesection}{1em}{}
\title{\textbf{\huge A non-asymptotic upper bound in prediction for the PLS estimator}\\}
\date{\today}
\begin{document}

\pagestyle{headings}	
\newpage
\setcounter{page}{1}
\renewcommand{\thepage}{\arabic{page}}

\setlength{\parskip}{12pt} 
\setlength{\parindent}{0pt} 
\onehalfspacing  
	
\maketitle
	
\noindent\rule{15cm}{0.5pt}
	\begin{abstract}
	We investigate the theoretical performances of the Partial Least Square (PLS) algorithm in a high dimensional context. We provide upper bounds on the risk in prediction for the statistical linear model $Y=X\beta+\varepsilon$ when considering the PLS estimator. Our bounds are non-asymptotic and are expressed in terms of the number of observations, the noise level, the properties of the design matrix, and the number of considered PLS components. In particular, we exhibit some scenarios where the variability of the PLS may explode and prove that we can get round of these situations by introducing a Ridge regularization step. These theoretical findings are illustrated by some numerical simulations. 
	\end{abstract}
\noindent\rule{15cm}{0.4pt}

\textbf{\textit{Keywords}}: \textit{Partial least squares; dimension reduction; regression; Ridge regularization}

\section{Introduction}

 We observe a $n$-sample $(X_i,Y_i)$, $i=1,\dots,n$, where the $Y_i\in\RR$ are outcome variables of interest and the $X_i\in\RR^p$ $p$-dimensional covariates. We consider a linear relationship within each couple $(X_i,Y_i)$, represented by the equation
\begin{equation}
\label{Eq:Modele lineaire}
Y=X\beta +\varepsilon,
\end{equation}
where $\varepsilon=(\varepsilon_1,\dots \varepsilon_n)^T\sim\N_n\big(0,\tau^2I_n\big)$, $X=(X_1,\dots, X_n)^T\in \RR^{n\times p}$ and $Y=(Y_1,\dots,Y_n)^T$ $\in \RR^n$. Here and below, the matrix $I_n$ is the identity matrix of size $n$, the parameter $\tau >0$ characterizes the noise level and the exponent $T$ denotes the transpose operator.

The linear model \eqref{Eq:Modele lineaire} has been widely investigated, both from practical and theoretical point of view. In particular, the high dimensional case, namely when $p$ is allowed to be (much) larger than $n$, has attracted a lot of attention. To manage the estimation of $\beta$, or the corresponding prediction $X\beta$, several approaches have been proposed. We can mention, among others, the Lasso algorithm introduced in \textcite{Tibshirani} or the elastic net method discussed in \textcite{Elastic2005}, both being based on a penalization of the least square (LS) problem. Another way to regularize the problem is to introduce a dimension reduction step. For instance, a Principal Component Analysis (PCA) performed on the design matrix $X$ will allow to reduce the number of explanatory variables: such a principle give rise to the Principal Component Regression (PCR). For a comprehensive introduction to this domain and to the aforementioned approaches, we refer to \textcite{Giraud} or \textcite{ESLI}.

This paper deals with the Partial Least Square (PLS) Algorithm (see for instance \textcite{Hoskuldsson}). The main difference with PCR relies in the fact that the dimension reduction step is not only driven by the design matrix $X$ but also by the response vector $Y$. Although this method and its variants have been widely used in an application purpose (in genetics \parencite{LeCao}, social science see \parencite{sawatsky2015partial}, in medicine  \parencite{yang2017application} as a short sample for possible references) and in particular in chemometrics (\textcite{SWOLD2001} or \textcite{AlSouki} among others), the non-linearity of the PLS algorithm makes its statistical analysis difficult. The aim of this paper is to provide a sharp description of the theoretical performances of this method in terms of associated prediction. In particular, denoting by $\hat\beta_{PLS}$ the PLS estimator of $\beta$, we provide a non-asymptotic bound for the prediction risk 
\begin{equation}\label{eq:predict}
\frac{1}{n} \| X\hat \beta_{PLS} - X\beta\|^2,\end{equation}
under a minimal set of assumptions. This bound extends a previous asymptotic analysis conducted in \textcite{Cook2019}. Considering a non-asymptotic framework allows to exhibit several scenarios for which the estimator provides or not relevant performances. In particular, we highlight some specific regimes where the signal to noise ratio is not large enough to ensure a control of the prediction risk. We prove that we can solve this problem by introducing a Ridge regularization step. These investigations are illustrated by numerical simulations. 

The paper is organized as follows. We provide a description of the PLS algorithm in Section \ref{s:PLS}. A prediction bound for the classical PLS estimator is provided and discussed in in Section \ref{s:main1}. Its regularized counterpart is investigated in Section \ref{s:main2}. Proof and technical results are gathered in Sections \ref{s:technical}, \ref{s:proof} and  \ref{s:proof_ridge}.

All along this contribution, we will use the following notations and conventions. The design matrix $X$ is considered as deterministic. The associated Gram matrix is written $\Sigma=\frac{1}{n} X^TX$.  We denote by $\hat \sigma=\frac{1}{n}X^TY$ the empirical covariance between $X$ and the response vector $Y$. The so-called population version of this last quantity is written $\sigma=\mathbb{E}(\hat\sigma)$ where $\mathbb{E}$ denotes the expectation w.r.t.$\ \varepsilon$. Given a matrix $A \in \mathbb{R}^{p\times s}$, $[A]:=\mathrm{span}(A)$ denotes the subspace of $\mathbb{R}^p$ generated by the columns of $A$. If $A \in \mathbb{R}^{s\times s}$ is a positive definite matrix, the highest and the lowest eigenvalues will be denoted respectively by $\rho(A)$ and $\rho_{\min}(A)$, its trace by $\Tr(A)$, while its condition number writes $\Cond(A)$. The diagonal matrix $\diag(A_{11},\dots, A_{ss})$ extracted from $A$ will be written $\diag(A)$. The $\ell^2$ norm is written $\|.\|$.

\section{The PLS estimator}
\label{s:PLS}
\subsection{The PLS algorithm}

Considering the linear model \eqref{Eq:Modele lineaire} in a high dimensional context, namely when $p$ is allowed to be much larger than $n$, creates mathematical issues since the classical least square estimator
\begin{equation}
\hat \beta_{OLS} = \argmin_{\beta\in \mathbb{R}^p} \| Y - X\beta\|^2,
\label{eq:OLS}
\end{equation}
is no more defined. The spectrum of the matrix $\Sigma$ indeed contains the eigenvalue $0$ with a strictly positive multiplicity. 

To solve this problem, several alternative methods have been proposed over the years. A possible way is to penalize the objective function in \eqref{eq:OLS}. For instance, introducing a $\ell^2$ (resp. $\ell^1$) penalty leads to the Ridge (resp. Lasso) estimator, while mixing both of them gives rise to the elastic-net estimator (see \textcite{ESLI}). As an alternative, one can consider dimension reduction methods, searching for the solution of the least square problem in a given subspace $H \subset \mathbb{R}^p$. More formally, we consider the estimator $\hat\beta_H$ defined as
$$ \hat \beta_H = \argmin_{\beta \in H} \| Y- X\beta\|^2.$$ 
For instance, the subspace $H$ can be based on the PCA decomposition of $X$ and defined as the subspace spanned by the first $K$ eigenvectors of $\Sigma$. Such a construction leads to the so-called Principal Component Regression (PCR). The choice of $K$ often reduces to a bias / variance trade-off. 

For the PCR, the subspace $H$ is only constructed from the design matrix $X$. The Partial Least Square (PLS) approach provides an alternative construction using both $X$ and the response vector $Y$. The PLS method is an iterative algorithm. For the first $K$ iterations with $K \in \lbrace 1,\dots, p\rbrace$, the idea is to look for the components which are the most correlated with the vector $Y$. In particular, for each $k\in \lbrace 1,\dots, K\rbrace$, we solve 
$$ \textbf{w}_k = \argmin_{w \in \mathbb{R}^p} \left[ - \frac{1}{n} \langle Y,\textbf{X}^{(k)}w \rangle \right],$$
where $\textbf{X}^{(k)}$ is a deflated version of $X$ defined iteratively as $\textbf{X}^{(k+1)}\textbf{=}\textbf{X}^{(k)}-\textbf{P}_{[\textbf{t}_k]}(\textbf{X}^{(k)})$ where $\textbf{t}_k=\textbf{X}^{(k)} \textbf{w}_k$ and $\textbf{P}_{[\textbf{t}_k]}$ denotes the orthogonal projection operator over $[\textbf{t}_k]$. The PLS construction is formalized in \cref{Algo:PLS} below.   

\begin{algorithm}[!ht]
\caption{Construction of the PLS components}\label{Algo:PLS}
Input {X}, Y and K
\begin{algorithmic}
\State $\textbf{X}_1\textbf{=}{X}$
\For{k\textbf{=}1,\dots, K}
        \State $\textbf{w}_k\textbf{=}\textbf{X}^{(k)^T} Y/ \| \textbf{X}^{(k)^T} Y\|_2$\quad (loadings computation)
        \State $\textbf{t}_k\textbf{=}\textbf{X}^{(k)}\textbf{w}_k$ \quad (component construction)
        \State $\textbf{X}^{(k+1)}\textbf{=}\textbf{X}^{(k)}\ \textbf{-}\ \textbf{P}_{[\textbf{t}_k]}(\textbf{X}^{(k)})$ \quad(deflation step)
\EndFor
\end{algorithmic}
\end{algorithm}

The vectors $(\textbf{w}_k)_{k=1..p}$ correspond to the PLS loadings while the PLS components are the vectors $(\textbf{t}_k)_{k=1..p}$. Given a number of components $K \in \lbrace 1,\dots, p \rbrace$, the associated PLS estimator $\hat\beta_{K}$ is then defined as
$$\hat\beta_{K} \in \mathrm{arg} \min_{\beta\in [W]} \| Y - X\beta \|^2 \quad \mathrm{with} \quad [W] = \mathrm{span}(\textbf{w}_1,\dots,\textbf{w}_K). $$
The PLS has been widely used in the last decades and can be considered as a cornerstone in applied statistics. It is obviously not possible to provide an exhaustive list of references on the subject. We mention, among others, \textcite{Aparicio}, \textcite{FrankFriedman1993}, \textcite{LeCao}, \textcite{Durif}, \textcite{AlSouki}, \textcite{yang2017application}, \textcite{abdel2014comparison} or \textcite{lee2011sparse}. However, we stress that this contribution has not an application purpose. We propose in the following sections to investigate the theoretical performances of this algorithm in terms of the prediction error \eqref{eq:predict}.

\subsection{Krylov representation and regularization}
The iterative form of \cref{Algo:PLS} makes the statistical analysis of the PLS method difficult. Nevertheless, \textcite{Helland} demonstrated that $[W]=\hat{ \G}$, where $\hat\G$ denotes the Krylov space defined as
$$\hat\G:= [\hat G] \quad \mathrm{with} \quad \hat G =( \hat\sigma, \Sigma\hat\sigma, \dots, \Sigma^{K-1} \hat\sigma).$$
This perspective is better suited to evaluate the theoretical performances of $\hat\beta_{PLS}$. It provides an explicit formula of the $K$ directions of the subspace spanned by the weights without a reference to their iterative aspect. In particular, the prediction associated to the PLS algorithm writes $X\hat\beta_K = P_{[XW]}Y = P_{[X\hat G]}Y$. Moreover, the PLS estimator satisfies
\begin{equation}\label{Eq:PLSestimator}
    \hat\beta_{K}= \argmin_{\beta\in \hat\G} \| Y - X\beta\|^2 = \hat G \hat\Theta^{-1} \hat G^T\hat\sigma \quad \mathrm{where} \quad \hat\Theta = \hat G^T \Sigma \hat G = (\hat\sigma^T \Sigma^{i+j-1} \hat\sigma )_{i,j=1..K},
\end{equation}
provided $\hat\Theta \in\mathbb{R}^{K\times K}$ is full rank. In expression \eqref{Eq:PLSestimator} above, each term is explicit and can be computed from the data.

Following \textcite{Cook2019}, the formulation displayed in \eqref{Eq:PLSestimator} will be a starting point for our analysis. First, we introduce the so-called population version of respectively $\hat G$ and $\hat \Theta$ defined respectively as
$$ G=\Bigl(\sigma, \Sigma\sigma, \dots \Sigma^{K-1}\sigma\Bigr)\in\RR^{p\times K} \quad \mathrm{and} \quad \Theta= G^T\Sigma G= (\sigma^T \Sigma^{i+j-1} \sigma )_{i,j=1..K}\in\RR^{K\times K}.$$
We will in particular focus our attention on the term $\bar{\beta}$ defined as 
\begin{equation}
    \bar{\beta}=G\Theta^{-1}G^T\sigma = G \Lambda \quad \mathrm{with} \quad \Lambda = \Theta^{-1} G^T \sigma,
    \label{Eq:barbeta}
\end{equation}
provided $\Theta$ has full rank. We can remark that $X\bar\beta$ allows to determine the best approximation of $X\beta$ over the image of $[XG]$ on the Krylov space by the design matrix $X$, namely $X\bar\beta = P_{[XG]}X\beta$.

The matrix $\Theta$ will play an important role in our analysis displayed below. The non-singularity of the matrix $\Theta$ implies that $G$ is full rank. The determinant of $\Theta$ represents the volume of the parallelotope formed by the Krylov components. In particular, the components are linearly independent if and only if the parallelotope has non-zero n-dimensional volume. In the following, we introduce 
\begin{equation} 
D=\diag(\Theta) \quad \mathrm{and} \quad R=D^{-\frac{1}{2}}\Theta D^{-\frac{1}{2}}.
\label{eq:Rmatrix}
\end{equation}
The matrix $R$ is the normalized version of $\Theta$ which can be interpreted as a correlation matrix between the Krylov components. If the components are linearly independent, the inversion of $\Theta$ is equivalent to the inversion of $R$. Since the estimator $\hat \beta_{PLS}$ in \eqref{Eq:PLSestimator} involves an estimated version of $\Theta$, it appears that the performances will deteriorate when the smallest eigenvalue $\rho_{\min}(R)$ of  $R$ will be too small in a sense which is made precise in Section \ref{s:main1}. Moreover the estimation of $\Theta$ by the random matrix $\hat\Theta$ and its inversion can create instability in the prediction process when the signal to noise ratio is too low (see \ref{ass:signal} in the next section).

To get round of this problem, we will introduce a Ridge regularization step in the PLS estimator. In particular, we will consider in Section \ref{s:main2} the estimator $\hat\beta_{K,\mathbf{\alpha}}$ defined as
\begin{equation}
\hat\beta_{K,\mathbf{\alpha}} = \hat G (\hat\Theta+\Delta_{\alpha})^{-1} \hat G^T\hat\sigma \quad \mbox{for any} \quad \Delta_{\alpha}=\mathrm{diag}(\alpha_1,...,\alpha_K) \in \mathbb{R}^K.
\label{eq:betaridge}
\end{equation}
We prove that with an appropriate choice of $\mathbf{\alpha}$, the bounds are similar compared to the case where the signal to noise ratio is large enough.

The non-linearity of the PLS estimator \eqref{Eq:PLSestimator} may explain that few investigations have been conducted regarding its theoretical performances. In the single component case, we can mention the seminal contribution proposed in \textcite{Cook}, where - up to our knowledge - bounds in terms of prediction error where proposed for the first time in an asymptotic context. Extensions of theses results have been proposed in e.g., \textcite{BCFM_22} with less restrictive conditions on the parameter $\beta$, asymptotic normality for $\hat\beta_1$ and confidence intervals, or in \textcite{Cast} where a non-asymptotic study was conducted and a sparse version of $\hat\beta_1$ has been considered.  In the general case (namely when $K\in \lbrace 1,\dots, p\rbrace$), we refer to \textcite{Cook2019} for asymptotic investigations. This paper can be considered as a generalization to the non-asymptotic case. Considering a non-asymptotic setting allows in particular to exhibit some additional specific scenarios for which prediction may not be pertinent. For instance, difficulties regarding the inversion of $\Theta$ may be hidden in an asymptotic context. This may help to describe the advantages and limitations of this method.  Non asymptotic bounds in prediction are proposed for both $\hat\beta_{K}$ (Section \ref{s:main1}) and its regularized version $\hat\beta_{K,\mathbf{\alpha}}$ (Section \ref{s:main2}).

\section{Theoretical results}
\label{s:main1}
Our first contribution is an upper bound on the quadratic loss in prediction for the PLS estimator with $K$ components. This bound is derived explicitly, subject to certain assumptions regarding the Krylov components. With the Krylov space point of view, we take into account the energy of each component in order to guarantee a non-asymptotic control.

\subsection{Non-asymptotic analysis}

The PLS estimator requires the inversion of the Gram matrix $\hat{\Theta}$ (defined in \eqref{Eq:PLSestimator}). The invertibility of this matrix ensures that $\hat{G}$ is full rank, that is, the Krylov components are linearly independent. To this end, we will make assumptions on the space spanned by $G$ via assumptions on the matrix$~\Theta$. As the inversion of $\Theta$ is equivalent to the inversion of $R$, we express this hypothesis on the correlation matrix $R$ introduced in \eqref{eq:Rmatrix}.

\begin{description}[font=\bf]
\item[Assumption A.1.]\namedlabel{ass:inverse}{{Assumption A.1}} \textit{The correlation matrix $R$  verifies $\rho_{\min}(R)>0$}.
\end{description}

Additionally, ensuring an accurate estimate of $\Theta^{-1}$ is essential. 
Recall that the diagonal elements of $\Theta$, which represent the norm of the Krylov components, are given by 
$$\sigma^T\Sigma^{2i-1}\sigma=\frac{1}{n}\|X\Sigma^{i-1}\sigma\|^2, \quad \forall i=1,\dots,K.$$
We consider the PLS regression with $K$ components if all the $K$ norms above are large enough. That is, higher than a value which will be defined later on. Intuitively if a norm is almost zero, then so is the Gram determinant of $\Theta$ despite the fact the matrix $G$ is full rank. The justification for this assertion is based on the classic Hadamard inequality which bounds the volume of the parallelotope by the products of the norms of the Krylov components. Making the assumption that these quantities are above a certain level of fixed inertia is then natural for achieving a good estimation of the matrix and obtaining efficient bounds for $K$ components. These levels of inertia are related to the variance terms of the estimators $\hat{\sigma}^T\Sigma^{2i-1}\hat{\sigma}$ of the Krylov components norms in the next assumption (see also \cref{Cor:Majoration3Termes} in Appendix \ref{s:technical}).

\begin{description}[font=\bf]
\item[Assumption A.2.]\namedlabel{ass:signal}{{Assumption A.2}} \textit{Let $\delta\in[0,1]$. For all $i\in\{1,...,K\}$, }
\begin{equation}
\sigma^T\Sigma^{2i-1}\sigma  \geq t_{\delta,R}\,\frac{\tau^2}{n} K\rho(\Sigma)^{i}\Tr(\Sigma^{i}) 
\quad \mathrm{with} \quad t_{\delta,R} \geq  128\frac{\ln(6K/{\delta})}{\rho_{\min}(R)}.
\label{eq:ass21}
\end{equation}
\end{description}

As observed by \textcite{Cook2019}, PLS estimation can be done even with a non invertible matrix $\Sigma$. Our conditions deal with the components of the Krylov space. First, \ref{ass:inverse} say that the components are linearly independent. It means the dimension $K$ is chosen sufficiently small, so that there is no redundant information. This assumption is not restrictive. \ref{ass:signal} is more restrictive. It ensures that the signal-to-noise ratio corresponding of each component is high enough. To highlight the interpretation of our condition \ref{ass:signal}, suppose similarly to \textcite{Cook2019} that the Gram matrix $\Sigma$ decomposes as
\begin{equation}\label{eq:decompS}\Sigma= H\Sigma_H H^T+H_0\Sigma_{H_0} H_0^T,\end{equation}
with $\bar{\beta}=G(G^T\Sigma G)^{-1}G^T\sigma = H(H^T\Sigma_H H)^{-1}H^T\sigma$. 
One can easily show that a sufficient condition to have \ref{ass:signal} is that $\max\Big(1,\big(t_{\delta,R}K\frac{p}{n}\frac{\tau^2}{\beta^T\Sigma\beta}\big)^{1/2}\Big)\frac{\rho(\Sigma)}{\rho_{\min}(\Sigma_H)}\leq 1$. That is, the minimal inertia of a component in the Krylov space should not be negligible beyond the maximal inertia of $\Sigma$. Observe that \textcite{Cook2019} assume that $\beta^T\Sigma\beta$ is bounded and $\frac{\Tr(\Sigma)}{n\Tr(\Sigma_H)}$ goes to 0. The two conditions are not directly related but they both read as a sufficient part of inertia of $\Sigma_H$.

\ref{ass:signal} guarantees that the matrix $\hat\Theta$ is invertible, as displayed in \cref{lem:R} (see Appendix \ref{s:technical}). It can be considered as a signal to noise ratio condition that ensures that enough signal is available in the considered theoretical  components to use the PLS algorithm on the couple $(X,Y)$. This assumption can not be checked in practice from real data since it is strongly related to the covariance $\sigma$. This assumption makes sense given the general approach adopted for this problem without any additional assumption on the matrix $\Sigma$. For the single component case in \citet{Cast}, this assumption is in line with the discussion made about the norm of the first component with the "high signal case" corresponding to 
$$\sigma^T\Sigma\sigma\ge t_{\delta}\frac{\tau^2}{n}\rho(\Sigma)\Tr(\Sigma).$$

In Section \ref{s:main2} we will introduce an alternative procedure with a Ridge regularization on matrix $\Theta$. We will prove that this approach allows to remove \ref{ass:signal}, up to an additional bias term in the prediction error.

Now, we have all the ingredients to present our first main result which provides a non-asymptotic bound on the prediction error of the PLS estimator. 

\begin{theorem}\label[thm]{Th:1} 
Let $\delta \in (0,1)$. Suppose that \ref{ass:inverse} and \ref{ass:signal} hold. Then, with a probability larger than $1-\delta$, 
\begin{multline*}
    \frac{1}{n}\|X\hat{\beta}_{K}-X\beta\|^2
    \le\frac{2}{n}\underset{v\in[G]}{\inf}\|X(\beta-v)\|^2\\
+D_{\delta,R}\frac{\tau^2}{n}\max\bigg(\Cond(D)\|\Lambda\|^2\sum_{i=1}^{K}\Tr(\Sigma^{2i}),\sqrt{\Cond(D)\|\Lambda\|^2\sum_{i=1}^{K}\Tr(\Sigma^{i})^2}\bigg),
\end{multline*}
for some constant $D_{\delta,R}$ depending only from $\delta$ and $R$.
\end{theorem}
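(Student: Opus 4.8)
The plan is to separate a deterministic approximation error from a stochastic estimation error, and to treat the latter by a perturbation analysis of the explicit formula \eqref{Eq:PLSestimator} around its population counterpart \eqref{Eq:barbeta}. First I would introduce the population target $\bar{\beta}=G\Lambda$ and split, via $\|a+b\|^2\le 2\|a\|^2+2\|b\|^2$,
\[
\frac1n\|X\hat\beta_{K}-X\beta\|^2\le \frac2n\|X\bar{\beta}-X\beta\|^2+\frac2n\|X\hat\beta_{K}-X\bar{\beta}\|^2 .
\]
The first term is purely deterministic: since $X\bar{\beta}=P_{[XG]}X\beta$ is the orthogonal projection of $X\beta$ onto $[XG]$ and $\{Xv:v\in[G]\}=[XG]$, we have $\|X\bar{\beta}-X\beta\|^2=\inf_{v\in[G]}\|X(\beta-v)\|^2$, which is exactly the approximation term (with its factor $2$) in the statement. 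Everything then reduces to controlling the stochastic term $\frac2n\|X\hat\beta_{K}-X\bar{\beta}\|^2$ on an event of probability at least $1-\delta$.

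Second, I would expand $\hat\beta_{K}-\bar{\beta}=\hat G\hat\Lambda-G\Lambda$, with $\hat\Lambda=\hat\Theta^{-1}\hat G^T\hat\sigma$, around the population quantities using the three elementary perturbations $\hat G-G$, $\hat\Theta^{-1}-\Theta^{-1}$ and $\hat\sigma-\sigma=\frac1n X^T\varepsilon$. The key algebraic remark is that the $i$-th column of $\hat G-G$ is $\Sigma^{i-1}(\hat\sigma-\sigma)=\frac1n\Sigma^{i-1}X^T\varepsilon$, so that after multiplication by $X$ every contribution reduces to quadratic and bilinear forms of the Gaussian vector $\varepsilon$: namely $\frac1n\|X\Sigma^{i-1}(\hat\sigma-\sigma)\|^2$, the cross terms $\sigma^T\Sigma^{j}(\hat\sigma-\sigma)$, and the entrywise deviations $\hat\sigma^T\Sigma^{i+j-1}\hat\sigma-\sigma^T\Sigma^{i+j-1}\sigma$ making up $\hat\Theta-\Theta$. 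A direct computation gives $\EE\|X\Sigma^{i-1}(\hat\sigma-\sigma)\|^2=\tau^2\Tr(\Sigma^{2i})$, which already foreshadows the factor $\sum_{i=1}^{K}\Tr(\Sigma^{2i})$.

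Third, the heart of the argument is the uniform control of $\hat\Theta^{-1}$, which I expect to be the main obstacle. Because $\Theta$ may be severely ill-conditioned, I would not invert it directly but work with the normalized matrix $R=D^{-1/2}\Theta D^{-1/2}$ of \eqref{eq:Rmatrix}, separating the scale of the components (the diagonal $D$, i.e. the squared norms $\sigma^T\Sigma^{2i-1}\sigma$) from their correlation structure. Writing $\hat\Theta^{-1}=\hat D^{-1/2}\hat R^{-1}\hat D^{-1/2}$ with $\hat D=\diag(\hat\Theta)$ and $\hat R=\hat D^{-1/2}\hat\Theta\hat D^{-1/2}$, \ref{ass:signal} together with \cref{Cor:Majoration3Termes} forces $\hat D$ to stay within a constant multiplicative factor of $D$ and $\hat R$ to stay close to $R$; combined with \ref{ass:inverse} and \cref{lem:R}, this yields on the good event both the invertibility of $\hat\Theta$ and a bound on $\|\hat\Theta^{-1}\|$ of order $\rho_{\min}(R)^{-1}\|D^{-1}\|$. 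This is precisely where $\Cond(D)$ and the $R$-dependence of the constant $D_{\delta,R}$ (through $\rho_{\min}(R)$ and $t_{\delta,R}$) enter. The delicate point is to keep this control sharp, i.e. to lose only $\Cond(D)$ and $\rho_{\min}(R)^{-1}$ and no spurious powers of $K$ or $\rho(\Sigma)$: a crude bound on $\|\hat\Theta^{-1}-\Theta^{-1}\|$ would degrade the trace factors in the final variance term.

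Finally, I would assemble the pieces. Each summand of the expansion is a product of a matrix factor controlled in the previous step (by $\Cond(D)$, $\rho_{\min}(R)$ and $\|\Lambda\|$, the latter arising from the term $XE_G\Lambda$ through Cauchy--Schwarz across the $K$ columns) and a noise form controlled by \cref{Cor:Majoration3Termes}. The two summands inside the $\max$ then appear naturally from the Bernstein-type deviation bounds for these forms: a variance regime producing $\|\Lambda\|^2\sum_{i=1}^{K}\Tr(\Sigma^{2i})$, and a Frobenius regime producing $\sqrt{\|\Lambda\|^2\sum_{i=1}^{K}\Tr(\Sigma^{i})^2}$, each weighted by $\Cond(D)$. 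A union bound over the $i,j\in\{1,\dots,K\}$ terms costs the factor $\ln(6K/\delta)$ already absorbed into $t_{\delta,R}$, delivering the claimed bound with probability at least $1-\delta$.
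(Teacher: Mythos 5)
Your plan is correct and is essentially the paper's own proof: the same bias--variance split through $\bar\beta$, the same three-term perturbation expansion of $\hat\beta_K-\bar\beta$ (in $\hat G-G$, $\hat\Theta^{-1}-\Theta^{-1}$, $\hat G^T\hat\sigma-G^T\sigma$), control of $\hat\Theta^{-1}$ via the normalized correlation matrix under \ref{ass:inverse} and \ref{ass:signal} (\cref{lem:R}), and assembly via the deviation bounds of \cref{Cor:Majoration3Termes}. Two minor deviations worth noting: the paper normalizes $\hat\Theta$ by the \emph{population} diagonal $D$ rather than by $\hat D=\diag(\hat\Theta)$ (see \eqref{eq:hatR}), which spares the extra step of proving $\hat D\asymp D$; and the two terms inside the $\max$ do not arise as two regimes of a single Bernstein-type bound, but from the $\Lambda$-sandwiched terms (\cref{lem:termI}, \cref{lem:termII}) versus the pure-noise term of \cref{lem:termIII}, which is linear in $\|\bar\Lambda\|\le\sqrt{\Cond(D)}\,\rho(R)\,\|\Lambda\|$ and hence yields the square-root term.
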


An explicit expression of the constant $D_{\delta,R}$ is given in the proof presented in Section \ref{s:proof} (see equation \eqref{eq:constantD}). We stress that the result displayed in \cref{Th:1} has been simplified for the ease of exposition. A more precise result has actually been proven (see in particular Section \ref{s:Bonus}).

The bound on the prediction error displayed in \cref{Th:1} relies on deviation result on non-centered weighted $\chi^2$ distribution with matrix norm inequalities. The analysis of this first result is discussed in the next subsection.

\subsection{Discussion} \label{sec:discuss}

The bound displayed in \cref{Th:1} is composed of two different terms which describe the classical bias-variance trade-off. The first term 
$$ \frac{1}{n}\underset{v\in[G]}{\inf}\|X(\beta-v)\|^2,$$
corresponds to the bias. It measures the distance between the true signal $X\beta$ and the most accurate prediction in the Krylov subspace $[G]$. This term depends on $K$ through the dimension of $\G$. In particular, using a large number of PLS components (i.e. a large value of $K$) will allow to provide a good approximation (understood as an approximation associated to a small bias). On the other hand, using few PLS components may lead to the situation where the Krylov space $[G]$ cannot provide a good approximation of the target $X\beta$. The second term appearing in the r.h.s. of the bound measures the variability of the estimator and can be considered as some kind of variance term. It essentially depends on four main quantities: the smallest eigenvalue of $R$ which measures the correlation between the Krylov components, $\Cond(D)$ which measures the difference of norms between the Krylov components, the trace of a power of the Gram matrix $\Sigma$ and the norm of the Krylov components $ \Lambda$ introduced in \eqref{Eq:barbeta}. The sum of traces of powers of $\Sigma$ and the quantity $\Cond(D)$ increase with $K$. An optimal value for this parameter should provide an equilibrium inside this bound. Note that constructing a data-driven choice for $K$ is beyond the scope of this paper.  

This result perfectly matches with a previous bound obtained in \textcite{Cast} in the specific case where $K=1$. In such a setting, the so-called variance term can be related to
$$ \|\Lambda\|^2 \Tr(\Sigma^2) = \frac{\Tr(\Sigma^2)}{\lambda^2} \quad \mathrm{with} \quad \lambda = \frac{\sigma^T\Sigma\sigma}{\sigma^T\sigma}.$$
This last quantity can be seen as the inverse of a relative inertia in this specific case. We refer to the aforementioned reference for an extended discussion on the subject. 

Following \textcite{Cook2019}, using decomposition \eqref{eq:decompS}, we can express the bound of \cref{Th:1} with the spectra of $R$, of $\Sigma$ and of $\Sigma_H$. Indeed, $\|\Lambda\|^2\leq \frac{\Cond(D)}{\rho_{\min}(R)^2}\sum_{i=1}^K \frac{1}{\rho_{\min}(\Sigma_H)^{2i}}$ and $\Cond(D)$ can be expressed with the spectrum of $\Sigma_H$, provided \eqref{eq:decompS} holds.  The resulting bound  differs from the rate given in \textcite{Cook2019}. This is mainly due to the facts that we consider a fixed design and a non asymptotic framework. 

\cref{Th:1} provides different bounds compared to those displayed in \textcite{Cook2019} where the performances of the PLS estimator are partially described in terms of the trace of $R$ and of the shape of the spectrum of $\Sigma$. Although we start with the same risk decomposition, we provide a non-asymptotic investigation: we do note require $n$ (or $p$) going toward infinity. Moreover, we do not use any assumption on the structure of $\G$ (except that it has a dimension $K$). We do not suppose for instance that it exactly handles $\beta$, contrarily to \textcite{Cook2019} where $\beta=\bar{\beta}$ defined in \eqref{Eq:barbeta}.

Last but not least, we consider fixed covariates $X$ (and hence fixed Gram matrix $\Sigma$) while \textcite{Cook2019}'s setting deals with random covariates. It enables to highlight the influence of the Krylov components. As illustrated by \ref{ass:signal}, this may create some issues when these quantities are close to the standard deviation of their estimates. To overcome this problem, a regularization step may be used. This will be considered in the next section.

\section{Estimation with Ridge PLS estimator }
\label{s:main2}
In order to avoid \ref{ass:signal} that ensures a control on the error driven by the estimation of $\Theta$ and its inversion,  we have introduced a variant of the PLS estimator that involves a Ridge penalization as displayed in \eqref{eq:betaridge}.

The Ridge estimator in model \eqref{Eq:Modele lineaire} was first introduced by \textcite{hoerl1970ridge}.  \textcite{theobald1974,farebrother1976} or more recently \textcite{dobriban2018} showed its efficiency in prediction. Here, we consider a Ridge approach with different penalties for the components. We refer to \textcite{van2021fast} and references therein for such an approach in regression models. More generally, a review of regularization approaches for covariance matrices, not specifically for regression models, including Ridge approach with multiple penalties, can be found in \textcite{engel2017overview}.

In some sense, we force the invertibility of the $\hat{\Theta}$ by summing it with a diagonal matrix $\Delta_\alpha=\mathrm{Diag}(\alpha_1,...,\alpha_K)$. Such a regularization avoids in particular to call upon \ref{ass:signal} to obtain a control on $\rho_{\min}(\hat\Theta)$ and $\rho(\hat\Theta)$ (see \cref{lem:invertR} for more details). \cref{Th:2} below provides a specific choice for the regularization parameters $\alpha$ and describes the performances of the estimator $\hat\beta_{K,\alpha}$ introduced in \eqref{eq:betaridge}. The associated proof is postponed to Section \ref{s:proof_ridge}.

\begin{theorem}\label[thm]{Th:2} 
Let $\delta \in (0,1)$. Suppose that \ref{ass:inverse} holds and set
\begin{equation}
\alpha_i = c_{\delta}K\frac{\tau^2}{n}\rho(\Sigma)^{i}\Tr(\Sigma^{i}) \quad \forall i\in \lbrace 1,\dots, K\rbrace \quad \mathrm{with} \quad c_{\delta} = 16C_\delta,
\label{eq:alphai}
\end{equation}
where $C_\delta$ is made precise in \cref{Cor:Majoration3Termes}. Then, with a probability larger than $1-\delta$, 
\begin{multline*}
    {\frac{1}{n}\|X\hat{\beta}_{K,\alpha}-X\beta\|^2}
     \le\frac{2}{n}\underset{v\in[G]}{\inf}\|X(\beta-v)\|^2\\
    +D_{\delta,R}'\frac{\tau^2}{n} \max\bigg(\Cond(D)\|\Lambda\|^2\,K\sum_{i=1}^{K}\rho(\Sigma^{i})\Tr(\Sigma^i),\sqrt{\Cond(D)\|\Lambda\|^2\sum_{i=1}^{K}\Tr(\Sigma^{i})^2}\bigg).
\end{multline*}
where 
\begin{equation}\label{eq:g2}
    D_{\delta,R}' = c_{\delta}' \Cond(R)^{4},
\end{equation}
and $c_{\delta}'$ is a positive constant depending on $\delta$.
\end{theorem}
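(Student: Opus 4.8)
The plan is to mirror the bias--variance argument behind \cref{Th:1}, the only structural change being that the invertibility of the Gram matrix, previously guaranteed by \ref{ass:signal}, is now produced by the regularization $\Delta_\alpha$ through \cref{lem:invertR}. First I would use $\|a+b\|^2\le 2\|a\|^2+2\|b\|^2$ together with the identity $X\bar\beta=P_{[XG]}X\beta$ recalled after \eqref{Eq:barbeta} to write
\[
\frac1n\|X\hat\beta_{K,\alpha}-X\beta\|^2\le \frac2n\|X\bar\beta-X\beta\|^2+\frac2n\|X\hat\beta_{K,\alpha}-X\bar\beta\|^2 ,
\]
so that the first summand is exactly the announced approximation bias $\frac2n\inf_{v\in[G]}\|X(\beta-v)\|^2$ and all the remaining work bears on the fluctuation term.

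Next I would introduce the regularized population estimator $\bar\beta_\alpha=G(\Theta+\Delta_\alpha)^{-1}G^T\sigma$ as a pivot and split the fluctuation as $\frac2n\|X\hat\beta_{K,\alpha}-X\bar\beta\|^2\le \frac4n\|X\hat\beta_{K,\alpha}-X\bar\beta_\alpha\|^2+\frac4n\|X\bar\beta_\alpha-X\bar\beta\|^2$. The last, purely deterministic, term is the regularization bias. Using the resolvent identity $(\Theta+\Delta_\alpha)^{-1}-\Theta^{-1}=-(\Theta+\Delta_\alpha)^{-1}\Delta_\alpha\Theta^{-1}$ and $\Lambda=\Theta^{-1}G^T\sigma$ from \eqref{Eq:barbeta}, together with $\frac1nG^TX^TXG=\Theta$ and $\Theta,\Delta_\alpha\preceq\Theta+\Delta_\alpha$, I would bound
\[
\frac1n\|X(\bar\beta_\alpha-\bar\beta)\|^2=\Lambda^T\Delta_\alpha(\Theta+\Delta_\alpha)^{-1}\Theta(\Theta+\Delta_\alpha)^{-1}\Delta_\alpha\Lambda\le \|\Lambda\|^2\max_i\alpha_i .
\]
By the calibration \eqref{eq:alphai} this is at most a constant times $\|\Lambda\|^2K\frac{\tau^2}n\sum_i\rho(\Sigma)^i\Tr(\Sigma^i)$, i.e.\ exactly the first entry of the stated $\max$; this is the reason why the variance term of \cref{Th:2} carries $K\sum_i\rho(\Sigma^i)\Tr(\Sigma^i)$ rather than the $\sum_i\Tr(\Sigma^{2i})$ of \cref{Th:1}.

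For the genuinely stochastic piece $\frac4n\|X\hat\beta_{K,\alpha}-X\bar\beta_\alpha\|^2$ I would reuse the perturbation machinery of \cref{Th:1}. The three sources of randomness, $\hat\sigma-\sigma=\frac1nX^T\varepsilon$, $\hat\Theta-\Theta$ and $\hat G-G$, are non-central quadratic or bilinear forms in the Gaussian noise, and on an event of probability at least $1-\delta$ they are controlled by the weighted non-central $\chi^2$ deviation bounds collected in \cref{Cor:Majoration3Termes}; by the choice \eqref{eq:alphai} each such deviation is dominated by the matching $\alpha_i$. The essential point, and what removes \ref{ass:signal}, is that \cref{lem:invertR} provides on the same event a lower bound on $\rho_{\min}(\hat\Theta+\Delta_\alpha)$ and a control of $\Cond(\hat\Theta+\Delta_\alpha)$ in terms of $\Cond(R)$, keeping $(\hat\Theta+\Delta_\alpha)^{-1}$ uniformly bounded regardless of the signal level. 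Telescoping $X\hat\beta_{K,\alpha}-X\bar\beta_\alpha$ over the perturbations in $\hat\sigma$, $\hat\Theta$ and $\hat G$, passing from $(\hat\Theta+\Delta_\alpha)^{-1}$ to $(\Theta+\Delta_\alpha)^{-1}$ by the resolvent identity, and collecting the matrix-norm inequalities then yields a bound of the form $D'_{\delta,R}\frac{\tau^2}n\max(\cdots)$ with the dependence on $R$ accumulating to $\Cond(R)^4$ as in \eqref{eq:g2}.

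I expect the main obstacle to be precisely this last step: controlling the regularized inverse and the propagation of the three perturbation terms uniformly, without the stabilizing effect of \ref{ass:signal}. The delicate bookkeeping is to check that, with $\alpha_i$ calibrated to the variance scale, $\hat\Theta+\Delta_\alpha$ remains well-conditioned with high probability and that each perturbation contributes at most the variance order, while tracking the exact powers of $\Cond(R)$ and $\Cond(D)$ and keeping the regularization bias of the same order as the stochastic fluctuation.
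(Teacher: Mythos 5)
Your proposal follows the paper's own proof essentially step for step: the same decomposition through the pivot $\beta_\alpha = G\Theta_\alpha^{-1}G^T\sigma$ as in \eqref{Eq:BiaisDecomp}, the same control of the regularization bias by a quantity of order $\max_i \alpha_i\,\|\Lambda\|^2$ (which, as you correctly note, is what forces the factor $K\sum_i\rho(\Sigma)^i\Tr(\Sigma^i)$ in place of the $\sum_i\Tr(\Sigma^{2i})$ of \cref{Th:1}), and the same plan for the stochastic term via the three-way perturbation telescope, \cref{lem:invertR} and \cref{Cor:Majoration3Termes}. The only deviation is cosmetic and harmless: you bound the regularization bias directly by operator monotonicity ($\Theta\preceq\Theta_\alpha$ and $\Delta_\alpha\preceq\Theta_\alpha$), which avoids the factors $\rho_{\min}(R)^{-1}+\rho_{\min}(R)^{-2}$ that the paper picks up in \eqref{eq:BiasTerm} by passing through $R_\alpha$.
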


\cref{Th:2} provides a result similar to the bound displayed in \cref{Th:1}. The choice of the $\alpha_i$ is related to the variance of the diagonal terms of the matrix $\hat\Theta$. In some sense, the regularization allows to counterbalance the effect of the noise that may deteriorate the rank. 

The introduction of this regularization term allows to remove \ref{ass:signal} from our analysis. We recall that this assumption is related to a sort of signal-to-noise ratio that should be large enough to guarantee good performances for the PLS estimator (see the previous section for an extended discussion). Since this ratio is not known a priori, our approach allows to secure the prediction. Nevertheless, we stress that the theoretical calibration of the $\alpha_i$ involves unknown constants such as $\tau^2$. From a practical point of view, a typical approach to get round of this problem would be to use a data-driven calibration (as, e.g., a cross-validation procedure).

To conclude this discussion, we point out that our regularization is strongly related to the Krylov representation of the PLS estimator. In particular, it is related to the following optimization problem.
\begin{proposition}
	 We have,
 	$$\hat{\beta}_{K,\alpha}=\hat{G}\cdot\underset{u\in\RR^{K}}{\mathrm{argmin}}\|Y-X\hat{G}u\|^2+u^T\Delta_{\alpha}u.$$
\end{proposition}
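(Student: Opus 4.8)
The plan is to read the right-hand side as an ordinary ridge (Tikhonov) regression in the $K$-dimensional Krylov coordinates and to solve the associated normal equations explicitly. First I would abbreviate $M=X\hat G\in\RR^{n\times K}$ and expand the penalized criterion as
$$ f(u)=\|Y-Mu\|^2+u^T\Delta_\alpha u = Y^TY-2u^TM^TY+u^T\bigl(M^TM+\Delta_\alpha\bigr)u.$$
Since the $\alpha_i$ chosen in \eqref{eq:alphai} are strictly positive, $\Delta_\alpha\succ 0$, hence $M^TM+\Delta_\alpha\succ 0$ as well; the map $f$ is therefore strictly convex and admits a \emph{unique} minimizer, irrespective of whether $M^TM$ (equivalently $\hat\Theta$) is singular. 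This is precisely what the regularization buys, and it guarantees that the $\mathrm{argmin}$ appearing in the statement is well defined even when \ref{ass:signal} is dropped.

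Next I would use the first-order optimality condition $\grad f(u)=0$, i.e. $\bigl(M^TM+\Delta_\alpha\bigr)u=M^TY$, which has the unique solution $u^\star=\bigl(M^TM+\Delta_\alpha\bigr)^{-1}M^TY$. It then remains to re-express the raw Gram products through the empirical quantities used to define $\hat\beta_{K,\alpha}$. Invoking $\Sigma=\frac1n X^TX$ and $\hat\sigma=\frac1n X^TY$, one has $\frac1n M^TM=\hat G^T\Sigma\hat G=\hat\Theta$ and $\frac1n M^TY=\hat G^T\hat\sigma$, so that the normal equations read $(\hat\Theta+\Delta_\alpha)u=\hat G^T\hat\sigma$. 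Consequently $u^\star=(\hat\Theta+\Delta_\alpha)^{-1}\hat G^T\hat\sigma$, and pre-multiplying by $\hat G$ yields
$$\hat G\,u^\star=\hat G\,(\hat\Theta+\Delta_\alpha)^{-1}\hat G^T\hat\sigma=\hat\beta_{K,\alpha},$$
which is exactly the claimed identity \eqref{eq:betaridge}.

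The only genuinely delicate point — the step I would treat as the main obstacle — is the bookkeeping of the $1/n$ normalization. The objects $\hat\Theta$ and $\hat\sigma$ each carry a factor $1/n$ relative to $X^TX$ and $X^TY$, so the least-squares term in the objective must be understood with the normalization inherited from $\Sigma=\frac1n X^TX$; with this convention the factors of $n$ produced by $M^TM$ and $M^TY$ cancel consistently and one recovers exactly the resolvent $(\hat\Theta+\Delta_\alpha)^{-1}$ rather than a rescaled version. Once this normalization is fixed, every remaining computation is routine linear algebra, and the identification with the definition in \eqref{eq:betaridge} is immediate.
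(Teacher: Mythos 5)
Your proof is correct and follows essentially the same route as the paper's: convexity of the quadratic objective in the Krylov coordinates, first-order conditions, and identification of the normal equations $(\hat{\Theta}+\Delta_{\alpha})u=\hat{G}^T\hat{\sigma}$ with the definition \eqref{eq:betaridge}. The $1/n$ issue you flag is genuine rather than mere bookkeeping: the paper's own proof minimizes $g(u)=\frac{1}{n}\|Y-X\hat{G}u\|^2+u^T\Delta_{\alpha}u$ (the stated proposition omits this factor, under which the literal argmin would yield $(\hat{\Theta}+\Delta_{\alpha}/n)^{-1}\hat{G}^T\hat{\sigma}$ instead), so your resolution --- reading the least-squares term with the $\frac{1}{n}$ normalization --- is precisely the convention under which the claimed identity holds.
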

\begin{proof}
	The function $g(u):=\frac{1}{n}\|Y-X\hat{G}u\|^2+u^T\Delta_{\alpha}u$ is convex and differentiable. The minimum satisfies the equality $\grad g(u)=\hat{G}^T\Sigma\hat{G}u+\Delta_{\alpha}u=\hat{G}^T\hat{\sigma}.$ It yields $\hat{\Theta}_{\alpha}u=\hat{G}^T\hat{\sigma}$. We deduce $$\hat{\beta}_{K,\alpha}=\hat{G}\hat{\Theta}_{\alpha}^{-1}\hat{G}^{T}\hat{\sigma}.$$
\end{proof}

Note that the computation of the estimator can be done either by the optimization problem, or using the explicit formulation. It only involves $K\times K$ matrices, since the reduction of dimension has been done through $\hat G$.

The term $u^T\Delta_{\alpha}u$ in this optimization problem is a $\ell^{2}$ penalization weighted by the $\alpha_i$. It operates on the Krylov coordinates of the estimator and not on the estimator itself. In particular, replacing $u^T\Delta_{\alpha}u$ by $ u^T\hat G^T \Delta_{\alpha} \hat G u $  will leads to the classical Ridge estimator (still restricted on $\hat G$) but will not allow to control the minimal eigenvalue of $\hat R$. 

The following section provides numerical simulation in a toy setting. In particular, it allows to prove that the unstability of the PLS is not only a mathematical artefact related to the Krylov representation and that the regularization proposed in this paper allows to improve the performances of the PLS estimator.

\section{Simulation study}\label{s:main3}

In this section we illustrate the properties of the Ridge PLS estimator. In a first time we define $\beta=\bar{\beta}$ as a linear combination of two normalized eigenvectors of the covariance matrix $\Sigma$. This guarantees that the bias term of \cref{Th:2} ($\frac{1}{n}\|X(\beta-\bar{\beta})\|^2$) is equal to zero allowing us to focus on the variance term. In particular, we study the effect of the signal-to-noise ratio (corresponding to \ref{ass:signal}) on the standard PLS estimator, and the effect of the Ridge regularization.
In a second time, we illustrate the bias variance tradeoff thanks to a parameter representing the distance between $\beta$ and the theoretical Krylov subspace.

We generate $N=2000$ samples of size $n=200$ as follows. We consider the case with $p=5$ with an underlying space $\G$ of dimension 2. It does not correspond to a high-dimensional setting but this framework allows to highlight more easily the behavior of the estimators with respect to the eigen structure of the Gram matrix $\Sigma$. For each simulation, we generate a design matrix $X\in \mathbb{R}^{n\times p}$ from a Gaussian centered distribution such that $\Sigma=\diag(\lambda_1,\lambda_2,\lambda_3,\lambda_4,\lambda_5).$ 
We denote $v_{i}$ the eigenvector of $\Sigma$ associated with the eigenvalue $\lambda_i$, for $i=1,\dots,5$. 
For a given $\beta$, the response $Y$ is generated according to \eqref{Eq:Modele lineaire} with $\tau^2=1$. Covariates $X$ are fixed among the $N$ samples while the noise $\varepsilon$ varies. Different scenarios will be considered, using different definitions of $\beta$.

\subsection{Effect of the regularization}\label{subsec:Regularization}

First, we consider a case without bias. We introduce a parameter $\eta\in \mathbb{R}^{+}$ in the definition of $\beta$ which corresponds to a signal-to-noise ratio. 

\paragraph{Scenario 1.}
We compute $\beta=\eta\cdot\big(v_{1}+v_{2}\big)$, with  $\eta>0$. We consider the following values of eigenvalues:
\begin{itemize}[label={},topsep=0pt]
    \item \textbf{Scenario 1a.} $\lambda_1=6.1$, $\lambda_2=6$, $\lambda_3=\lambda_4=\lambda_5=0.5$,
    \item \textbf{Scenario 1b.} $\lambda_1=0.9$, $\lambda_2=0.3$, $\lambda_3=\lambda_4=\lambda_5=0.2$.
\end{itemize}
\paragraph{Scenario 2.}
We compute $\beta=\eta\cdot\big(v_{4}+v_{5}\big)$, with  $\eta >0$. We consider the following values of eigenvalues:
\begin{itemize}[label={},topsep=0pt]
\item \textbf{Scenario 2a.} $\lambda_1=3$, $\lambda_2=2$,  $\lambda_3=2$, $\lambda_4=2$, $\lambda_5=1$.
    \item \textbf{Scenario 2b.} $\lambda_1=4$, $\lambda_2=2$, $\lambda_3=2$, $\lambda_4=2$, $\lambda_5=1$,
\end{itemize}

The configuration of Scenario 1 is such that the Krylov subspace is carried by the two main eigenvalues of the matrix $\Sigma$. Scenario 2 corresponds to a case where the Krylov components are carried by small eigenvalues of $\Sigma$.

In Scenario 1, when $\beta=\eta\cdot\big(v_{1}+v_{2}\big)$, the theoretical covariance $\sigma$ satisfies
\begin{align*}
    \sigma=&\ \eta \cdot \big(\lambda_{1}v_{1}+\lambda_{2}v_{2}\big),\\
     \sigma^T\Sigma^{2i-1}\sigma=&\ \eta^2 \cdot \big(\lambda_{1}^{2i+1}+\lambda_{2}^{2i+1}\big), \quad i=1,2.
\end{align*}
Calculating $\Theta$ gives 
$$\Theta=\eta^2\cdot\begin{pmatrix}
\lambda_1^3+\lambda^3_2 & \lambda_1^4+\lambda_2^4\\
\lambda_1^4+\lambda_2^4 & \lambda_1^{5}+\lambda_2^{5}
\end{pmatrix}.
$$
We compute $\Lambda$ as a function of $\lambda_1$ and $\lambda_2$, 
\begin{equation}
\Lambda=\left(\frac{\lambda_1+\lambda_2}{\lambda_1\lambda_2},-\frac{1}{\lambda_1\lambda_2}\right).\label{Eq:LambdaK2}
\end{equation}
Note that the Krylov coordinates $\Lambda$ are independents of $\eta$. The parameter $\eta$ preserves $\Lambda$ while modifying the determinant of $\Theta$. It is clear that for a given $\delta$, \ref{ass:signal} is not satisfied for low values of $\eta$, and is satisfied for high values.

Similar equations can be displayed in Scenario 2, with a change in the indexes.

Recall that $K=2$ in this section. Our aim is to compare the numerical performances of the PLS estimator 
\begin{equation}
\hat{\beta}_{K}=\hat{G}\hat{\Theta}^{-1}\hat{G}^T\hat{\sigma},\label{Eq:SimusPLSEstimator}
\end{equation}
and its regularized version
\begin{equation}
\hat{\beta}_{\alpha}=\hat{G}\hat{\Theta}_{\alpha}^{-1}\hat{G}^T\hat{\sigma}.
\label{Eq:SimusRidgeEstimator}
\end{equation}
To emphasize the effect of the inversion of $\hat\Theta$ on the estimation process, we also include in the analysis the pseudo-estimator $\hat\beta^{or}$ defined as
\begin{equation}
\hat{\beta}^{or}=\hat{G}\Theta^{-1}G^T\sigma.
\label{Eq:SimusAxisEstimator}    
\end{equation}

The pseudo-estimator $\hat{\beta}^{or}$ correspond to the specific case where the Krylov coordinates $\Lambda$ are assumed to be known and can be considered as an oracle. It is linear in the direction of the subspace $\hat{\G}$. The quadratic risk associated to $\hat{\beta}^{or}$ does not depend of the parameter $\eta$. Indeed, its quadratic risk is equal to $\Lambda^T(\hat{G}-G)^T\Sigma(\hat{G}-G)\Lambda$. Equation \eqref{Eq:LambdaK2} shows that $\lambda$ does not depend on $\eta$. The $j\textsuperscript{th}$ column of $\hat{G}-G$ is 
$\Sigma^{i-1}\hat{\sigma}-\Sigma^{i-1}\sigma=\Sigma^{i-1}\frac{X^T\varepsilon}{n},$
which does not depend on $\eta$ either.
This proves that the risk of $\hat{\beta}^{or}$ is constant as a function of $\eta$.

The parameter $\alpha$ in the estimator $\hat{\beta}_{\alpha}$ is of the form $$\big(\alpha_1,\alpha_2\big):=\bigg(C_1K\frac{\tau^2}{n}\rho(\Sigma)\Tr(\Sigma),C_{2}K\frac{\tau^2}{n}\rho(\Sigma)^{2}\Tr(\Sigma^{2})\bigg),$$
with $C_1$ and $C_{2}$ detailed depending of the simulations. 
They were set respectively to $C_1=0.08$ and $C_2=0.05$ in Scenario 1a., and to $C_1=C_2=0.02$ in Scenario 1b. and and respectively to $C_1=0.002$ and $C_2=0.0005$ in Scenario 2a and Scenario 2b. Note that these constants were not modified depending on $\eta$.

Finally, we study the performances of the estimators by the evaluation of $$\mathrm{MSE}_{\eta,j}=\frac{1}{n\times N}\sum_{i=1}^{N}\|X(\beta_{\eta}-\hat{\beta}_{i,j})\|^2,$$
where $\hat{\beta}_{i,j}$ is the estimator according to the \(i\)\textsuperscript{th} sample from $Y=X\beta+\varepsilon$ with $j$ denoting the choice of the estimator. The index $j=1,2,3$ are respectively the PLS estimator \eqref{Eq:SimusPLSEstimator}, the oracle estimator \eqref{Eq:SimusAxisEstimator} and the Ridge estimator \eqref{Eq:SimusRidgeEstimator}.

\begin{figure}[!ht]
    \begin{subfigure}[b]{0.425\textwidth}
    \includegraphics[scale=0.44]{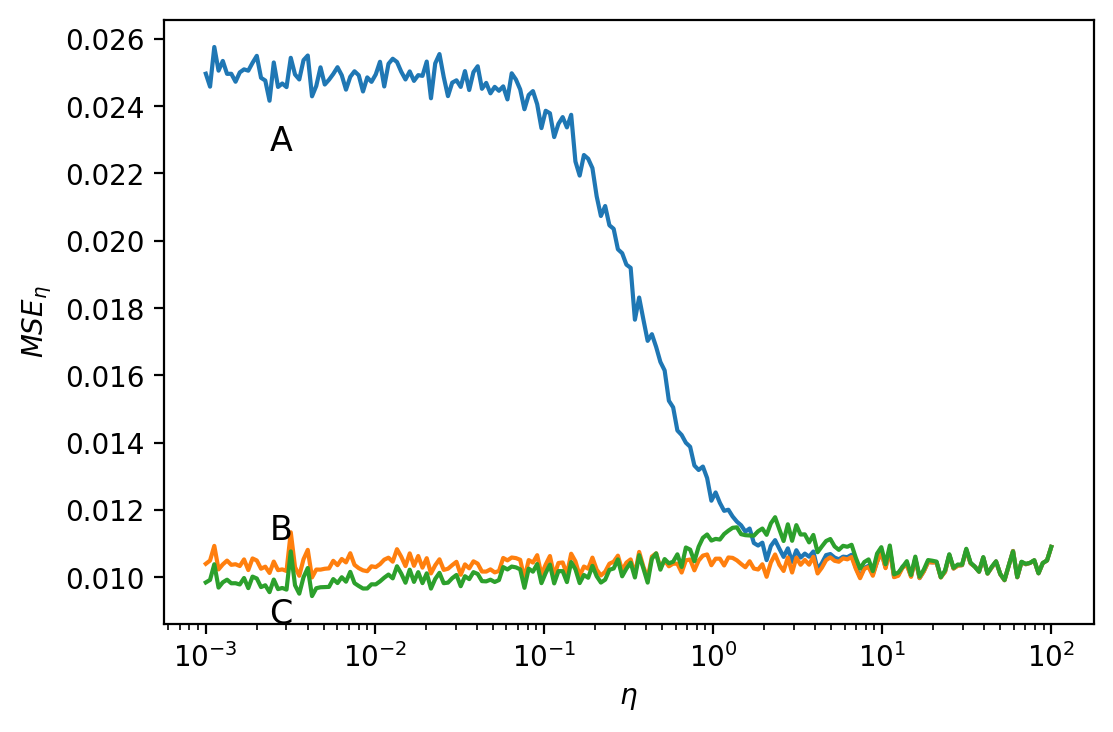}
    \captionsetup{labelformat=simple, labelsep=period}
    \caption{Scenario 1a}
    \label{Risque1}
    \end{subfigure}
    \hspace{0.1\textwidth}
    \begin{subfigure}[b]{0.425\textwidth}
    \includegraphics[scale=0.44]{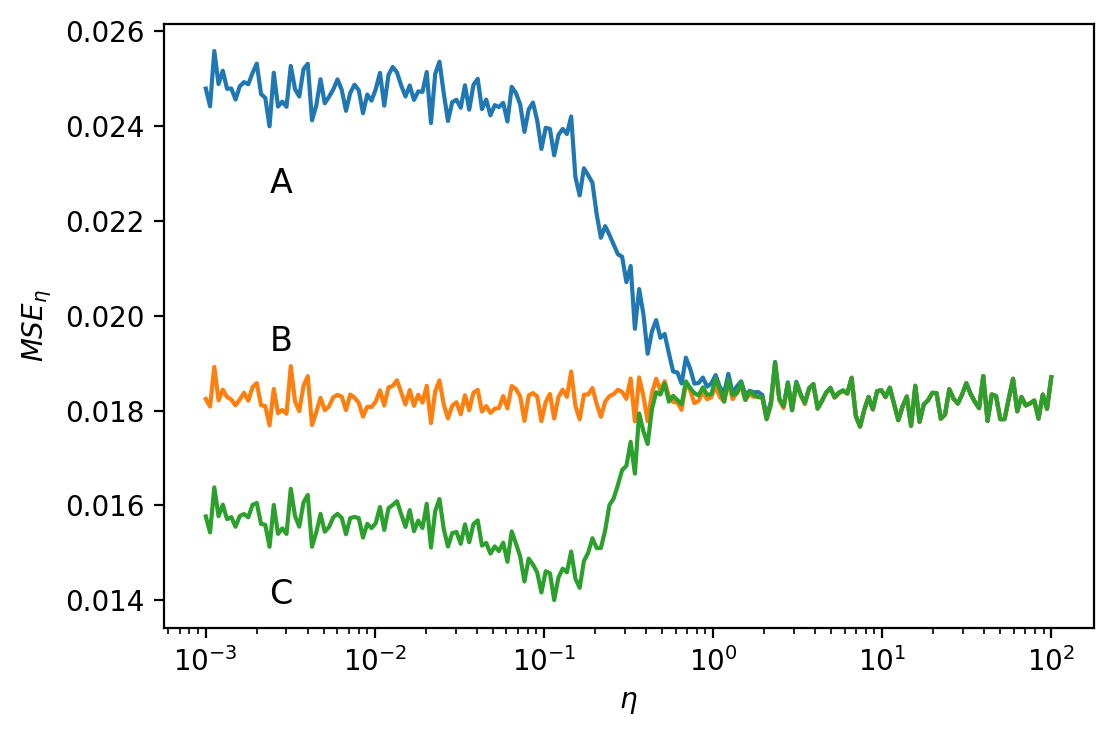}
    \captionsetup{labelformat=simple, labelsep=period}
    \caption{Scenario 1b}
    \label{Risque2}
    \end{subfigure}
    \caption{Quadratic risk $\mathrm{MSE}_{\eta}$ with respect to $\eta$ for Scenario 1. Curves A, B C give the quadratic risk respectively for the PLS estimator \eqref{Eq:SimusPLSEstimator}, the oracle estimator \eqref{Eq:SimusAxisEstimator} and the Ridge estimator \eqref{Eq:SimusRidgeEstimator}.}
    \label{RisqueSimu}
\end{figure}
   
\begin{figure}[!ht]
    \begin{subfigure}[b]{0.425\textwidth}
    \includegraphics[scale=0.44]{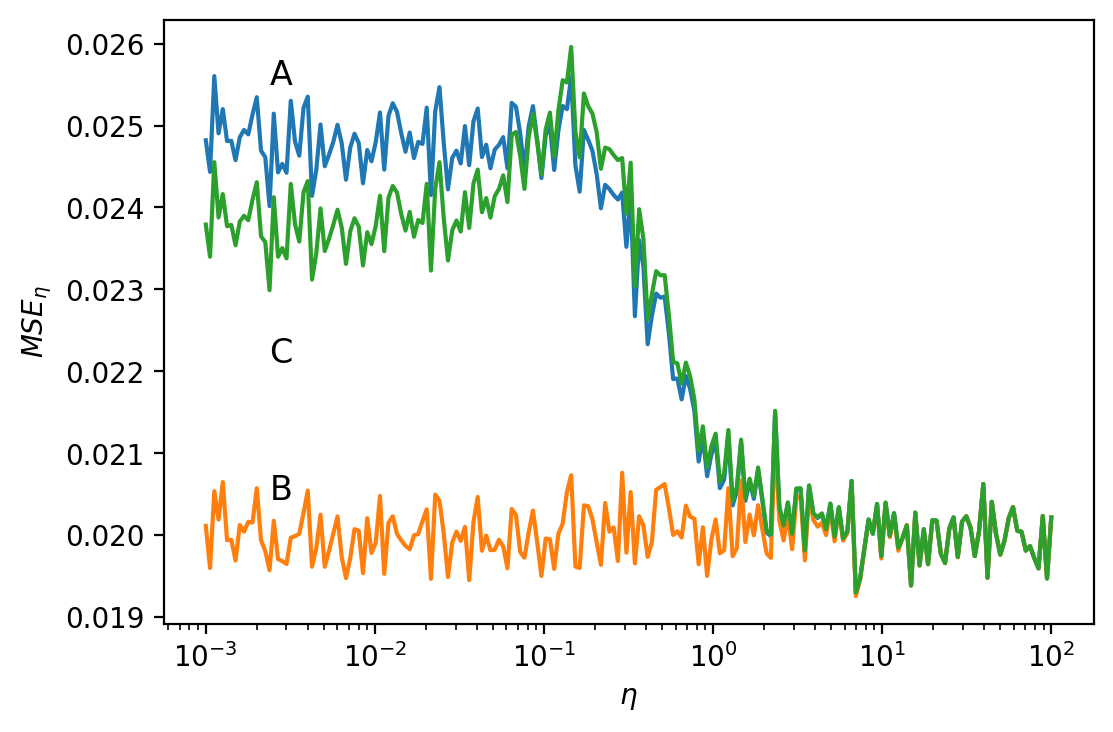}
    \captionsetup{labelformat=simple, labelsep=period}
    \caption{Scenario 2a}
    \label{Risque31}
    \end{subfigure}
    \hspace{0.1\textwidth}
    \begin{subfigure}[b]{0.425\textwidth}
    \includegraphics[scale=0.44]{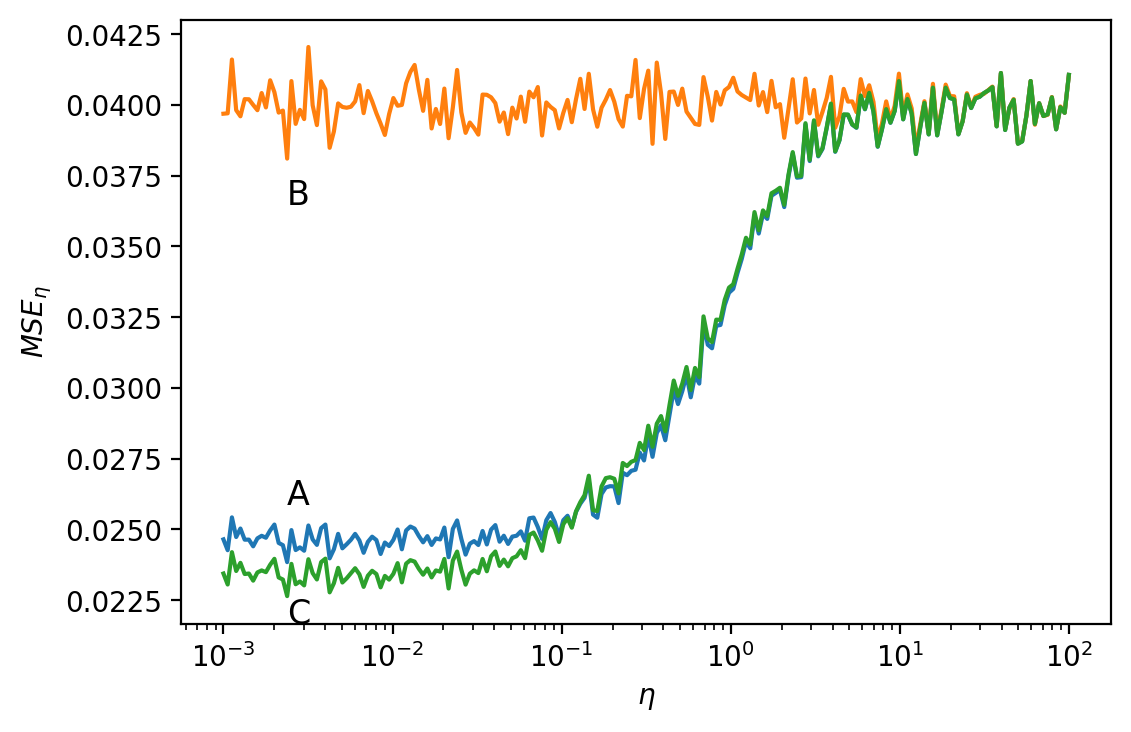}
    \captionsetup{labelformat=simple, labelsep=period}
    \caption{Scenario 2b}
    \label{Risque32}
    \end{subfigure}
    \caption{Quadratic risk $\mathrm{MSE}_{\eta,.}$ with respect to $\eta$ for Scenario 2. Curves A, B C give the quadratic risk respectively for the PLS estimator \eqref{Eq:SimusPLSEstimator}, the oracle estimator \eqref{Eq:SimusAxisEstimator} and the Ridge estimator \eqref{Eq:SimusRidgeEstimator}.}
    \label{RisqueSimu2}
\end{figure}

\subsubsection*{Influence of level-to-noise ratio}

\cref{RisqueSimu} and \cref{RisqueSimu2} display the different quadratic risk associated with each estimator according to $\eta$ on a logarithmic scale, respectively in Scenario 1 and in Scenario 2.

Scenario 1 illustrates that, when the signal-to-noise ratio parameter $\eta$ is low, the quality of the PLS estimator deteriorates. In these settings, the benefits of Ridge regularization is noticeable. In particular when \ref{ass:signal} is not satisfied (for small $\eta$). 

The oracle estimator \eqref{Eq:SimusAxisEstimator} corresponds to an estimator where the PLS axis $\Lambda$ are known and only the coordinates of $\beta$ on the axis are estimated. As the prediction error is constant (up to Monte-Carlo error), it shows that the quality of estimation mainly depends on the quality of the estimated axis. In particular, the degradation of the PLS for low $\eta$ is based on the estimation of $\Lambda$, mainly through the error on $\hat\Theta^{-1}$. The Ridge regularization improves this estimation.

Scenario 1 corresponds to cases where $\beta$ belongs to a Krylov space generated by the two highest eigenvectors of the Gram matrix $\Sigma$.
While Scenario 2 corresponds to cases generated by the two lowest eigenvectors. As illustrated on \cref{RisqueSimu2}, the behaviour of the estimators does not only rely on the rank of the eigenvectors.

Scenario 2a behaves similarly than Scenario 1.
In Scenario 2b, we can observe from \cref{Risque32} that the quadratic risk increases when $\eta$ increases. This last setting is, hence, very different from the others. 
Scenario 2 illustrates two very different behaviors with equal coordinates $\Lambda$. The main difference lies in the spectral radius $\rho(\Sigma)$, i.e. the spectrum of the matrix $\Sigma$. The risk of the pseudo estimator (\ref{Eq:SimusAxisEstimator}) is sensitive to the spectrum as shown in \cref{RisqueSimu2}. Indeed, the only difference between Scenario 2a and Scenario 2b is that the relative inertia explained by the Krylov space differs. To illustrate this inertia ratio between spectrum and Krylov coordinates, we then propose to illustrate in Section \ref{subsec:Tradeoff}, with a fixed spectrum for $\Sigma$, the behaviour of the PLS estimator when the coordinates of $\Lambda$ vary. 

When the inertia ratio is low, in Scenario 2b, the quality of estimation deteriorates (the oracle estimator has a mean quadratic risk of 0.04, compared to 0.02 and lower in other settings). Surprisingly, in this case, the PLS estimator outperforms the oracle estimator, and the quality is equivalent to the Ridge estimator. Such behavior does not occur in settings like Scenario 1, with the Krylov space carried by the main eigenvectors. It only occurs when the ratio of the highest eigenvalue of $\Sigma$ and the lowest eigenvalue in the Krylov space is high ($\rho(\Sigma)/\rho_{\min}(\Sigma_H)$ in Section \ref{sec:discuss}). In this case, the error of projection is high, as shown by the behavior of the oracle estimator.

\subsubsection*{Levels of penalization}

In examples above, the constants $C_1$ and $C_2$ have been appropriately chosen to illustrate the benefits of the Ridge estimator \eqref{Eq:SimusRidgeEstimator}. We propose to highlight the extreme behavior associated with these parameter choices in Scenario 1a. First, the Ridge parameters are set to a low value, that is $C_1=C_2=0.002$, and then they are set higher, $C_1=C_2=0.2$.
The choice of these constants is related to a bias variance tradeoff, as illustrated below.

\begin{figure}[!ht]
    \begin{subfigure}[b]{0.425\textwidth}
    \includegraphics[scale=0.44]{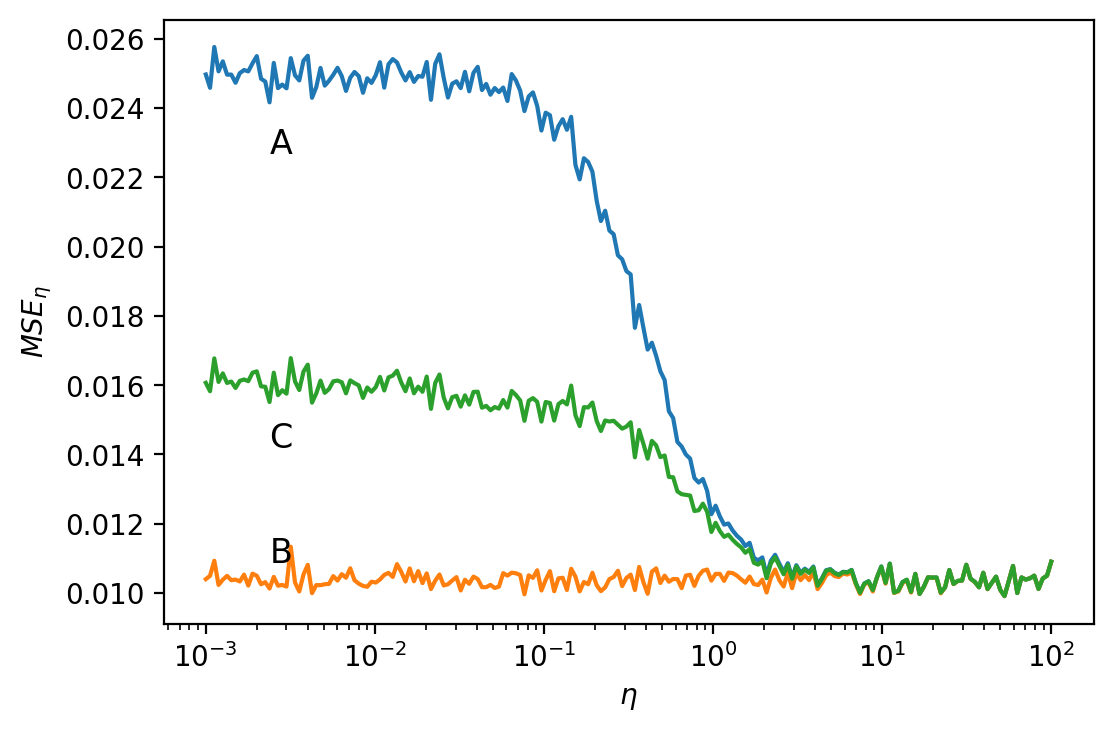}
    \captionsetup{labelformat=simple, labelsep=period}
    \caption{Scenario 1a - Low Ridge constants $C_1=C_2=0.004$.}
    \label{Biais1}
    \end{subfigure}
    \hspace{0.1\textwidth}
    \begin{subfigure}[b]{0.425\textwidth}
    \includegraphics[scale=0.44]{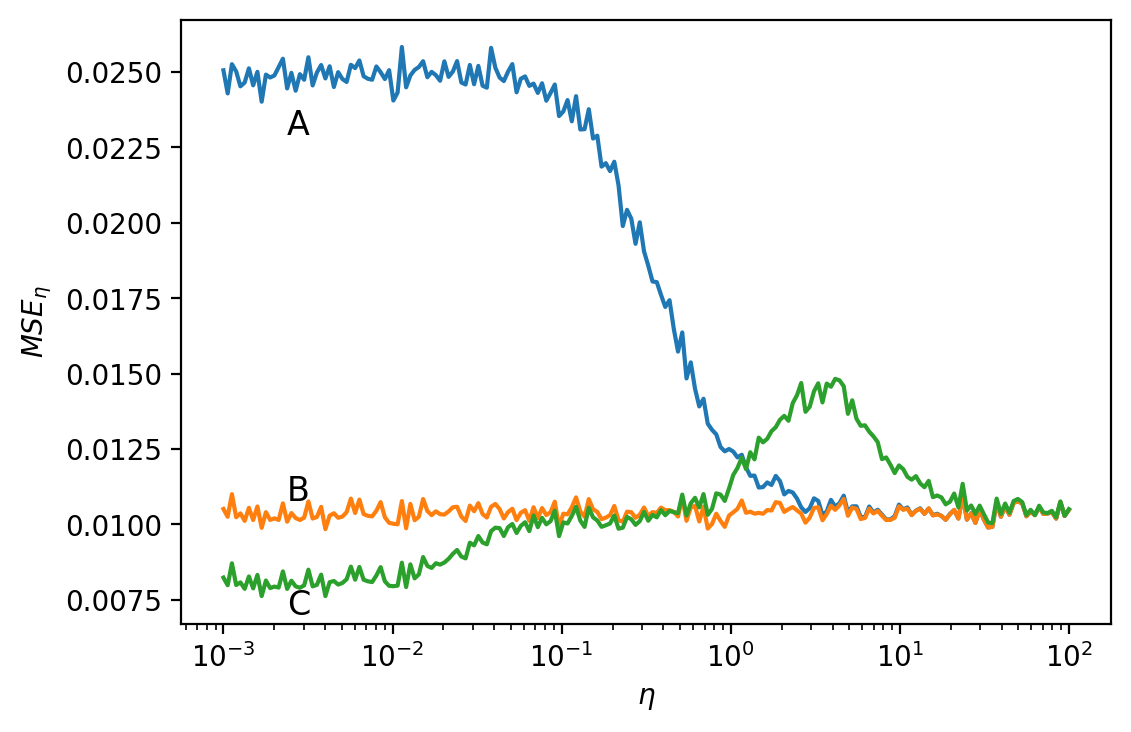}
    \captionsetup{labelformat=simple, labelsep=period}
    \caption{Scenario 1a - High Ridge constants $C_1=C_2=0.4$.}
    \label{Biais2}
    \end{subfigure}
    \caption{Quadratic risk $\mathrm{MSE}_{\eta,.}$ with respect to $\eta$ for Scenario 1a. Curves A, B C give the quadratic risk respectively for the PLS estimator \eqref{Eq:SimusPLSEstimator}, the oracle estimator \eqref{Eq:SimusAxisEstimator} and the Ridge estimator \eqref{Eq:SimusRidgeEstimator} with different choices of C1 and C2.}
    \label{BiaisSimu}
\end{figure}

On \cref{Biais1}, the Ridge regularization is low, in order to be closer to the PLS estimator. In this case, the bias induced by $\alpha_1$ and $\alpha_2$ is virtually absent, but the variance is greater and the Ridge regularization has a larger risk for small $\eta$. On \cref{Biais2}, in the opposite case where the parameters are large, the variance of the Ridge regularization is lower for small $\eta$. The bias induced by the parameters is increasing and noticeable for high $\eta.$ It can mainly be seen on the graph for $\eta$ between 1 and 10.

\subsection{Bias variance tradeoff}\label{subsec:Tradeoff}

We introduce a parameter $\nu\in[0,1]$ to represent how far $\beta$ is from given Krylov subspaces.

\paragraph{Scenario 3.}
We compute $\beta=\nu \cdot v_{1} + v_2 + (1-\nu)\cdot v_{5}$, with $\nu \in [0,1]$.  We consider $\lambda_1=3$, $\lambda_2=2$, $\lambda_3=0.06$, $\lambda_4=0.05$ and $\lambda_5=0.04$. 

The two extreme cases, $\nu=0$ and $\nu=1$, correspond to the situations where the Krylov subspace has dimension 2. The parameter $\nu$ introduces a bias from subspace $\G=\mathrm{Vect}(v_{2},v_{5})$ to $\G=\mathrm{Vect}(v_{1},v_{2})$.
The main difference between these two cases are the eigenvalues associated to each eigenvector.

We are interested at the mean square error $\mathrm{MSE}_{\nu}$ defined as
$$\mathrm{MSE}_{\nu}=\frac{1}{n \times N}\sum_{i=1}^{N}\|X(\beta_{\nu}-\hat{\beta}_{2,i})\|^2,$$
where $\hat{\beta}_{2,i}$ is the PLS estimator with 2 components according to the \(i\)\textsuperscript{th} sample. We decompose this risk into a bias term and a variance term. The bias term is $\frac{1}{n}\|X(\beta-\bar{\beta})\|^2$ with $\bar{\beta}$ defined in \eqref{Eq:barbeta}. It represents the distance between $X\beta$ and the prediction in the Krylov subspace.

\begin{figure}[ht]
    \centering
    \includegraphics[scale=0.6]{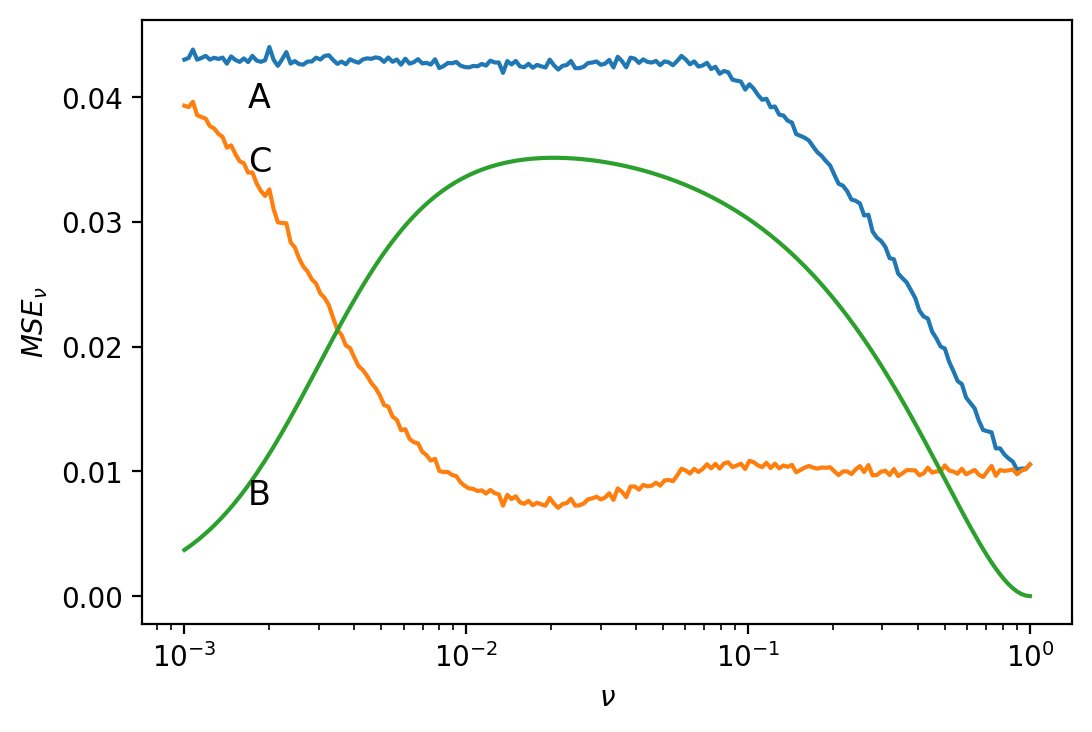}
    \caption{Bias variance tradeoff in Scenario 3. Curve A gives the quadratic risk $\mathrm{MSE}_\nu$ with respect to the parameter $\nu$. Curve B is the bias term and curve C is the difference between the risk (Curve A) and the bias term (Curve B).}
    \label{Tradeoff}
\end{figure}

\cref{Tradeoff} shows the bias variance tradeoff corresponding to \cref{Th:1}. Indeed, when $\beta$ belongs to a space of dimension 2, that is, when $\nu=0$ or $\nu=1$, the bias is minimal. The closer $\nu$ is from 0.5, the higher the distance between $\beta$ and a space of dimension 2, and the higher the bias. 

The evolution of the variance is illustrated by curve C in \cref{Tradeoff}. Our simulation shows that it decreases by changing the structure of the Krylov space. The results are similar than the ones from the previous section (Scenario 1 versus Scenario 2b), showing a smaller variance when the eigenvalues corresponding to the eigenvectors used in the construction of $\beta$ are high.

\section{Conclusion}

Our results establish non asymptotic bounds of the prediction of the PLS estimator.  Considering a non asymptotic framework, and non random covariates, allows to highlight that the procedure is efficient under a signal-to-noise condition, that is, when the PLS components are relevant enough. Moreover, our work put in evidence the influence of the Gram matrix of the covariates $\Sigma$. We adopt the Krylov space viewpoint which is a suitable framework to investigate theoretical performance. This approach enables us to apply deviation results and set prediction bounds.

To overcome the condition of sufficient signal-to-noise ratios, we propose a Ridge regularization. Based on the Krylov representation of the PLS estimator, this approach provides a similar bound than the classical PLS regression, assuming only that the Krylov components are linearly independent. This method allows us to get rid of the matrix R with the parameter $\alpha$ depending only of the dimension, the noise and the Gram matrix $\Sigma$.

Finally, a simulation study illustrates that the assumption of a sufficient signal-to-noise ratio to ensure the quality of the PLS approach makes sense. It also shows that the Ridge regularization succeeds to overcome this assumption. The importance of the eigen structure of the Gram matrix $\Sigma$ is also highlighted.

\appendix
\section{Preliminary technical results}
\label{s:technical}

\subsection[Main distribution]{Distribution properties of $\hat{\sigma}$}
This section is dedicated to some specific technical results that will be used all along the proofs.
We first state the moments and the distribution of the main quantities appearing in the construction of the PLS estimator. 

\begin{lemma}\label{lem:variances}
We have for any $i\in \NN,$
$$ \hat\sigma \sim\N_p\Bigl(\sigma,\frac{\tau^2}{n}\Sigma\Bigr) \quad \mathrm{and} \quad \Sigma^{i}\hat\sigma \sim\N_p\Bigl(\Sigma^{i}\sigma,\frac{\tau^2}{n}\Sigma^{2i+1}\Bigr).$$
In particular
$$  \EE[\hat{\sigma}^T\hat{\sigma}] =\sigma^T\sigma+\frac{\tau^2}{n}\Tr(\Sigma),  \quad \quad  \EE[\hat{\sigma}^T\Sigma^{i}\hat{\sigma}] =\sigma^T\Sigma^{i}\sigma+\frac{\tau^2}{n}\Tr(\Sigma^{i+1}),$$
and 
$$ \EE[(\hat{\sigma}-\sigma)^T\Sigma^{i}(\hat{\sigma}-\sigma)] =\frac{\tau^2}{n}\Tr(\Sigma^{i+1}).$$
\end{lemma}

The results of this lemma are a direct consequence of the definition of $\hat\sigma$ and of the fact that $\varepsilon \sim \mathcal{N}(0,\tau^2 I_n)$. The proof is thus omitted.

\subsection[Deviation inequalities]{Deviation inequalities}
\begin{proposition}\label[prop]{Lemme:ExtensionLaurent}
Let $U\sim\N_D(m,tA)$ with $D\in \mathbb{N}$, $m \in \mathbb{R}^D$, $t\in \RR_+$ and $A\in \mathbb{R}^{D\times D}$ a symmetric positive matrix. Define, for any $s\in \mathbb{N}$, 
$$\Xi_s=t^2\Tr(A^{2(s+1)}) +2t\rho(A^{s+1})\| A^{\frac{s}{2}}m\|^2,$$ 
Then, for all $s\in \mathbb{N}$ and $x\geq 0$,
\begin{align*}
    i) & \quad \PP\bigg(U^TA^sU-\EE[U^TA^sU]\ge 2\sqrt{\Xi_s x}+2t\rho(A)^{s+1}x\bigg) \le e^{-x},\\
    ii) &  \quad \PP\bigg(U^TA^sU-\EE[U^TA^sU]\le -2\sqrt{\Xi_s x}\bigg) \le e^{-x}.
\end{align*}
\end{proposition}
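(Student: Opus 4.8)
The plan is to diagonalize $A$, reduce the quadratic form to a sum of independent one-dimensional contributions, and then run a Bernstein/Laurent--Massart style Chernoff argument separately for the two tails. First I would write $A=P\Lambda P^T$ with $\Lambda=\diag(\lambda_1,\dots,\lambda_D)$, $\lambda_i\ge0$, and $P$ orthogonal, and represent $U=m+\sqrt{t}\,A^{1/2}Z$ with $Z\sim\N_D(0,I_D)$. Setting $Y=P^TZ\sim\N_D(0,I_D)$ and $\mu=P^Tm$, expanding $U^TA^sU$ and subtracting its mean gives the decomposition
$$W:=U^TA^sU-\EE[U^TA^sU]=\sum_{i=1}^D\Big(t\lambda_i^{s+1}(Y_i^2-1)+2\sqrt{t}\,\mu_i\lambda_i^{s+1/2}Y_i\Big),$$
a sum of independent terms. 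Writing $a_i=t\lambda_i^{s+1}$ and $b_i=2\sqrt{t}\,\mu_i\lambda_i^{s+1/2}$, I record for later that $\sum_i a_i^2=t^2\Tr(A^{2(s+1)})$ and $\sum_i b_i^2=4t\,m^TA^{2s+1}m$.

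Second, I would compute each log-Laplace transform from the standard Gaussian identity $\EE[e^{uY^2+vY}]=(1-2u)^{-1/2}\exp\big(v^2/(2(1-2u))\big)$, valid for $u<1/2$. For the upper tail this yields $\log\EE[e^{\theta a_i(Y_i^2-1)+\theta b_iY_i}]=-\theta a_i-\tfrac12\log(1-2\theta a_i)+\theta^2b_i^2/(2(1-2\theta a_i))$. Using the elementary bound $-\theta a_i-\tfrac12\log(1-2\theta a_i)\le(\theta a_i)^2/(1-2\theta a_i)$ and $1-2\theta a_i\ge1-2\theta a_\star$ with $a_\star:=t\rho(A)^{s+1}$, summing over $i$ gives
$$\log\EE[e^{\theta W}]\le\frac{\theta^2}{1-2\theta a_\star}\Big(\sum_i a_i^2+\tfrac12\sum_i b_i^2\Big),\qquad 0\le\theta<\tfrac{1}{2a_\star}.$$
The crucial step is to identify the bracket with $\Xi_s$: since $\lambda_i^{2s+1}\le\rho(A)^{s+1}\lambda_i^{s}$ I obtain $\tfrac12\sum_i b_i^2=2t\,m^TA^{2s+1}m\le2t\rho(A)^{s+1}m^TA^sm=2t\rho(A^{s+1})\|A^{s/2}m\|^2$, whence $\sum_i a_i^2+\tfrac12\sum_i b_i^2\le\Xi_s$. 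Thus the log-MGF is controlled by the Bernstein form $v\theta^2/(2(1-c\theta))$ with $v=2\Xi_s$ and $c=2a_\star$.

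Third, I would invoke the standard Bernstein--Chernoff lemma: whenever $\log\EE[e^{\theta W}]\le v\theta^2/(2(1-c\theta))$ on $[0,1/c)$, one has $\PP(W\ge\sqrt{2vx}+cx)\le e^{-x}$. With $v=2\Xi_s$ and $c=2t\rho(A)^{s+1}$ this is precisely claim $(i)$, since $\sqrt{2vx}=2\sqrt{\Xi_s x}$. For the lower tail $(ii)$ I repeat the Laplace computation with the sign reversed, bounding $\EE[e^{-\theta W}]$ for $\theta>0$: here $2\theta a_i-\tfrac12\log(1+2\theta a_i)\le(\theta a_i)^2$ and $1+2\theta a_i\ge1$ remove the $(1-c\theta)$ denominator entirely, producing the purely sub-Gaussian bound $\log\EE[e^{-\theta W}]\le\theta^2\Xi_s$; optimizing over $\theta$ then gives $\PP(W\le-2\sqrt{\Xi_s x})\le e^{-x}$.

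The main obstacle is the bookkeeping in the second step: correctly carrying the non-central contribution $\sum_i b_i^2$ and majorizing it by $2t\rho(A^{s+1})\|A^{s/2}m\|^2$ so that the variance proxy matches $\Xi_s$ exactly, while applying the elementary logarithmic inequalities on the correct parameter ranges and combining them into the clean Bernstein form with the right constants. Once $\Xi_s$ is correctly recovered, the remaining Chernoff optimization is routine.
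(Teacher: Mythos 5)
Your proof is correct, but it follows a different route from the paper: the paper's own proof of this proposition is a two-line citation (it invokes Lemma~2 of Laurent and Massart, and refers to Proposition~2 of Cast for the details of the noncentral extension), whereas you rederive the deviation inequality from first principles. Concretely, you diagonalize $A$, reduce $U^TA^sU-\EE[U^TA^sU]$ to the independent sum $\sum_i\bigl(a_i(Y_i^2-1)+b_iY_i\bigr)$ with $a_i=t\lambda_i^{s+1}$ and $b_i=2\sqrt t\,\mu_i\lambda_i^{s+1/2}$, compute the exact log-Laplace transform, and run a Bernstein--Chernoff argument. This is essentially the argument that lies behind the cited lemma, so the two approaches agree in spirit; what yours buys is self-containedness and transparency. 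In particular it makes explicit that the natural variance proxy is $t^2\Tr(A^{2(s+1)})+2t\,m^TA^{2s+1}m$, and that the only inequality separating this from the $\Xi_s$ of the statement is $m^TA^{2s+1}m\le\rho(A^{s+1})\|A^{s/2}m\|^2$; the paper's approach buys brevity and delegates the computation to the literature.

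Two small repairs are needed in your write-up. First, in the lower-tail step the inequality should read $\theta a_i-\tfrac12\log(1+2\theta a_i)\le(\theta a_i)^2$, not $2\theta a_i-\tfrac12\log(1+2\theta a_i)\le(\theta a_i)^2$ (the latter is false near $0$); the corrected form is exactly what your argument needs, and it holds because $h(u)=u^2-u+\tfrac12\log(1+2u)$ satisfies $h(0)=0$ and $h'(u)=4u^2/(1+2u)\ge0$. Second, the elementary upper-tail bound $-u-\tfrac12\log(1-2u)\le u^2/(1-2u)$ on $[0,1/2)$ is true but not a textbook identity, so it deserves a one-line justification (for instance by comparing the power series $\sum_{k\ge2}(2u)^k/(2k)$ and $\sum_{k\ge2}2^{k-2}u^k$ term by term). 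With these two points addressed, the proof is complete and the constants match the statement exactly.
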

\begin{proof}
The result follows from an application of Lemma 2 from \textcite{Laurent}. For more details see Proposition 2 in \textcite{Cast}.
\end{proof}
Before stating additional results, we introduce, for any $x\in \mathbb{R}^+$ and $i\in \{0,...,2K-1\}$, the following quantities:
\begin{align}
\label{eqn:T2}	 \mathbf{T}_{1,i}(x)&=g(x)\frac{\tau^2}{n}\Tr(\Sigma^{i+1})+2\sqrt{2}\sqrt{\frac{\tau^2}{n}}\rho(\Sigma)^{\frac{i+1}{2}}\sqrt{x}\|\Sigma^{\frac{i}{2}}\sigma\|,\\
\label{eqn:T3}	\mathbf{T}_{2,i}(x)& =g(x)\frac{\tau^2}{n}\Tr(\Sigma^{i+1}),
\end{align}
with 
\begin{equation}
\label{eqn:C}
g(x)=1+2x+2\sqrt{x}.
\end{equation}

The following proposition will be the main element on which our proof is based. It provides deviation results on the main quantities of interest.

\begin{proposition}\label{Prop:Majoration3Termes}
For any $0<\delta<1$, let $(\mathcal{A}_{i,\delta})_{i=0}^{2K-1}, (\mathcal{B}_{i,\delta})_{i=0}^{2K-1}$ the events respectively defined as
\begin{align*}
\mathcal{A}_{i,\delta} &= \left\lbrace	 \left|  \hat{\sigma}^T\Sigma^{i}\hat{\sigma} - \sigma^T\Sigma^{i}\sigma \right| \le \mathbf{T}_{1,i}(x_{\delta})\right\rbrace,\\
\mathrm{and} \quad \mathcal{B}_{i,\delta} &= \left\lbrace		(\hat{\sigma}-\sigma)^T\Sigma^{i}(\hat{\sigma}-\sigma)\le \mathbf{T}_{2,i}(x_{\delta}) \right\rbrace,
\end{align*}
with $x_{\delta}=\ln(6K/\delta)$. Then, 
$$ \mathbb{P}(\mathcal{A}_\delta) \geq 1-\delta \quad \mathrm{where} \quad \mathcal{A}_\delta:= \bigcap_{i=0}^{2K-1} \A_{i,\delta}\cap \B_{i,\delta}.$$ 
\end{proposition}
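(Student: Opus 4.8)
The plan is to derive each event $\mathcal{A}_{i,\delta}$ and $\mathcal{B}_{i,\delta}$ as the consequence of one or two applications of the deviation inequality in \cref{Lemme:ExtensionLaurent}, and then to close with a union bound over the $6K$ resulting elementary events. Throughout, the expectations are the ones computed in \cref{lem:variances}.

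First I would treat the events $\mathcal{A}_{i,\delta}$. I apply \cref{Lemme:ExtensionLaurent} to $U=\hat\sigma\sim\N_p(\sigma,\frac{\tau^2}{n}\Sigma)$, with $t=\frac{\tau^2}{n}$, $A=\Sigma$, $m=\sigma$ and $s=i$. Writing $\hat\sigma^T\Sigma^i\hat\sigma-\sigma^T\Sigma^i\sigma=(\hat\sigma^T\Sigma^i\hat\sigma-\EE[\hat\sigma^T\Sigma^i\hat\sigma])+\frac{\tau^2}{n}\Tr(\Sigma^{i+1})$, the one-sided bounds $i)$ and $ii)$ control the centered term from above and below, each with failure probability at most $e^{-x}$. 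The key manipulation is to split $\Xi_i=\frac{\tau^4}{n^2}\Tr(\Sigma^{2(i+1)})+2\frac{\tau^2}{n}\rho(\Sigma)^{i+1}\|\Sigma^{i/2}\sigma\|^2$ by subadditivity of the square root, giving $2\sqrt{\Xi_i x}\le 2\frac{\tau^2}{n}\sqrt{\Tr(\Sigma^{2(i+1)})x}+2\sqrt{2}\sqrt{\frac{\tau^2}{n}}\,\rho(\Sigma)^{(i+1)/2}\sqrt{x}\,\|\Sigma^{i/2}\sigma\|$. The second summand is already the last term of $\mathbf{T}_{1,i}(x)$ in \eqref{eqn:T2}.

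The remaining terms should be absorbed into the factor $g(x)\frac{\tau^2}{n}\Tr(\Sigma^{i+1})$ using the two elementary spectral inequalities $\Tr(\Sigma^{2(i+1)})\le\Tr(\Sigma^{i+1})^2$ and $\rho(\Sigma)^{i+1}\le\Tr(\Sigma^{i+1})$, both valid since $\Sigma$ is positive semidefinite. They yield $2\frac{\tau^2}{n}\sqrt{\Tr(\Sigma^{2(i+1)})x}\le 2\sqrt{x}\,\frac{\tau^2}{n}\Tr(\Sigma^{i+1})$ and $2\frac{\tau^2}{n}\rho(\Sigma)^{i+1}x\le 2x\,\frac{\tau^2}{n}\Tr(\Sigma^{i+1})$; adding the bias $\frac{\tau^2}{n}\Tr(\Sigma^{i+1})$ reconstructs exactly the factor $g(x)=1+2\sqrt{x}+2x$ of \eqref{eqn:C}, so the upper deviation gives $\hat\sigma^T\Sigma^i\hat\sigma-\sigma^T\Sigma^i\sigma\le\mathbf{T}_{1,i}(x)$. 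For the lower deviation, the nonnegative bias only helps, so $\hat\sigma^T\Sigma^i\hat\sigma-\sigma^T\Sigma^i\sigma\ge-2\sqrt{\Xi_i x}\ge-\mathbf{T}_{1,i}(x)$ using the same two bounds together with $g(x)\ge 2\sqrt{x}$. On the intersection of the two deviation events we obtain $\mathcal{A}_{i,\delta}$, at a probability cost of $2e^{-x}$.

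The events $\mathcal{B}_{i,\delta}$ are handled the same way but are simpler: I apply \cref{Lemme:ExtensionLaurent} to the centered vector $U=\hat\sigma-\sigma\sim\N_p(0,\frac{\tau^2}{n}\Sigma)$, so that $m=0$ kills the second summand of $\Xi_i$ and only the bound $i)$ is needed. The same two spectral inequalities collapse $\frac{\tau^2}{n}\Tr(\Sigma^{i+1})+2\sqrt{\Xi_i x}+2\frac{\tau^2}{n}\rho(\Sigma)^{i+1}x$ into $g(x)\frac{\tau^2}{n}\Tr(\Sigma^{i+1})=\mathbf{T}_{2,i}(x)$, giving $\mathcal{B}_{i,\delta}$ with failure probability $e^{-x}$. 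Finally, counting $2$ elementary events per index for the $\mathcal{A}_{i,\delta}$ and $1$ per index for the $\mathcal{B}_{i,\delta}$, over $i\in\{0,\dots,2K-1\}$, produces $2K(2+1)=6K$ events; the choice $x=x_\delta=\ln(6K/\delta)$ makes each fail with probability $\delta/(6K)$, and a union bound yields $\PP(\mathcal{A}_\delta^c)\le\delta$, i.e. $\PP(\mathcal{A}_\delta)\ge 1-\delta$. The only real care needed is the bookkeeping matching the generic deviation terms to the precise definitions \eqref{eqn:T2}--\eqref{eqn:T3}, and landing on exactly $6K$ events so that the constant $\ln(6K/\delta)$ is the correct one.
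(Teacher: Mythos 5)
Your proof is correct and follows essentially the same route as the paper's: the same application of \cref{Lemme:ExtensionLaurent} to $\hat\sigma$ (both one-sided bounds) and to $\hat\sigma-\sigma$ (upper bound only), the same spectral inequalities $\sqrt{\Tr(\Sigma^{2(i+1)})}\le\Tr(\Sigma^{i+1})$ and $\rho(\Sigma)^{i+1}\le\Tr(\Sigma^{i+1})$ to reconstruct $g(x)$, the same absorption of the bias $\frac{\tau^2}{n}\Tr(\Sigma^{i+1})$ from \cref{lem:variances}, and the same $6K$-event union bound. Your treatment of the lower deviation (noting the bias only helps and $g(x)\ge 2\sqrt{x}$) is made explicit where the paper leaves it implicit, but the argument is identical in substance.
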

\begin{proof}
First, applying item i) and ii) of \cref{Lemme:ExtensionLaurent} on the variable $\hat\sigma$ with $s=i$, $t=\frac{\tau^2}{n}$, $m=\sigma$ and $A=\Sigma$, we get, for all $x\in \mathbb{R}_+$
$$ \mathbb{P}( | \hat\sigma^T \Sigma^i \hat\sigma - \mathbb{E}[\hat\sigma^T \Sigma^i \hat\sigma] | \geq B_{i,x}) \leq e^{-x},$$
where 
\begin{align*}
B_{i,x} 
& =  2\sqrt{x}\sqrt{\left( \frac{\tau^2}{n}\right)^2 \Tr(\Sigma^{2(i+1)}) + 2\frac{\tau^2}{n} \rho(\Sigma^{i+1}) \| \Sigma^{i/2}\sigma\|^2} + 2 \frac{\tau^2}{n} \rho(\Sigma)^{i+1} x\\
& \leq  2\sqrt{x} \frac{\tau^2}{n} \sqrt{\Tr(\Sigma^{2(i+1)})} + 2x\frac{\tau^2}{n} \rho(\Sigma)^{i+1} + 2\sqrt{2x} \sqrt{\frac{\tau^2}{n}} \sqrt{\rho(\Sigma^{i+1})} \sqrt{\sigma^T\Sigma^i \sigma}\\
& \leq (2\sqrt{x}+2x) \frac{\tau^2}{n} \Tr(\Sigma^{i+1}) + 2\sqrt{2x} \sqrt{\frac{\tau^2}{n}}\rho(\Sigma)^{\frac{i+1}{2}} \| \Sigma^{i/2}\sigma\|.
\end{align*}
\cref{lem:variances} then allow to obtain 
$$\PP(\A_{i,\delta}^{C})\le 2 \frac{\delta}{6K}.$$
Using again i) of \cref{Lemme:ExtensionLaurent} on the variable $\hat{\sigma}-\sigma$ with $s=i$, $t=\frac{\tau^2}{n}$, $m=0$ and $A=\Sigma$, we get $\PP(\B_{i,\delta}^{C})\le \frac{\delta}{6K}$.
Using the union bound we have $\PP(\A^{C})\le 2K (\frac{\delta}{3K})+ 2K (\frac{\delta}{6K})\le \delta.$
\end{proof}

We can re-formulate \cref{Prop:Majoration3Termes} above as follows. 
\begin{corollary}\label{Cor:Majoration3Termes}
Let $0<\delta<1$. Denote $C_\delta=\max(g(x_\delta),\,2\sqrt{2 x_\delta})$. Then, on the set $\mathcal A_\delta$, for all $i \in \lbrace 0,\dots,p \rbrace$, 
\begin{align}
\label{eq12} \left|  \hat{\sigma}^T\Sigma^{i}\hat{\sigma} - \sigma^T\Sigma^{i}\sigma \right| & \le C_\delta\left(\frac{\tau^2}{n}\Tr(\Sigma^{i+1})+\sqrt{\frac{\tau^2}{n}}\rho(\Sigma)^{\frac{i+1}{2}}\|\Sigma^{\frac{i}{2}}\sigma\|\right),\\
\label{eq13} \mathrm{and} \quad (\hat{\sigma}-\sigma)^T\Sigma^{i}(\hat{\sigma}-\sigma) & \le C_{\delta}\frac{\tau^2}{n}\Tr(\Sigma^{i+1}).
\end{align}
\end{corollary}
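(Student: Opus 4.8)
The plan is to recognize that Corollary \ref{Cor:Majoration3Termes} is a purely algebraic repackaging of the high-probability bounds already established in Proposition \ref{Prop:Majoration3Termes}. Once we restrict to the event $\mathcal{A}_\delta=\bigcap_{i}(\mathcal{A}_{i,\delta}\cap\mathcal{B}_{i,\delta})$, both inequalities hold deterministically, so no further probabilistic argument is needed: it only remains to replace the somewhat cumbersome quantities $\mathbf{T}_{1,i}(x_\delta)$ and $\mathbf{T}_{2,i}(x_\delta)$ by cleaner upper bounds expressed through the single constant $C_\delta=\max\big(g(x_\delta),\,2\sqrt{2x_\delta}\big)$.

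First I would place myself on $\mathcal{A}_\delta$. By the very definition of the events $\mathcal{A}_{i,\delta}$ and $\mathcal{B}_{i,\delta}$ appearing in \cref{Prop:Majoration3Termes}, we then have, for each index $i$,
\[
\big|\hat{\sigma}^T\Sigma^{i}\hat{\sigma}-\sigma^T\Sigma^{i}\sigma\big|\le \mathbf{T}_{1,i}(x_\delta),\qquad (\hat{\sigma}-\sigma)^T\Sigma^{i}(\hat{\sigma}-\sigma)\le \mathbf{T}_{2,i}(x_\delta),
\]
so the whole task reduces to dominating the two right-hand sides. For \eqref{eq13}, recalling from \eqref{eqn:T3} that $\mathbf{T}_{2,i}(x)=g(x)\frac{\tau^2}{n}\Tr(\Sigma^{i+1})$ and using $g(x_\delta)\le C_\delta$ (immediate since $C_\delta$ is a maximum that includes $g(x_\delta)$), we get at once $\mathbf{T}_{2,i}(x_\delta)\le C_\delta\frac{\tau^2}{n}\Tr(\Sigma^{i+1})$, which is exactly \eqref{eq13}.

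For \eqref{eq12}, I would treat the two summands of $\mathbf{T}_{1,i}(x_\delta)$ in \eqref{eqn:T2} separately. The first summand $g(x_\delta)\frac{\tau^2}{n}\Tr(\Sigma^{i+1})$ is bounded using $g(x_\delta)\le C_\delta$ as above; the second summand carries the prefactor $2\sqrt{2}\sqrt{x_\delta}=2\sqrt{2x_\delta}\le C_\delta$, again directly from the definition of $C_\delta$. Factoring $C_\delta$ out of the resulting sum then produces precisely the bracketed expression $\frac{\tau^2}{n}\Tr(\Sigma^{i+1})+\sqrt{\frac{\tau^2}{n}}\rho(\Sigma)^{\frac{i+1}{2}}\|\Sigma^{\frac{i}{2}}\sigma\|$ of \eqref{eq12}.

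There is essentially no hard step here; the only point requiring care is the bookkeeping with the definition of $C_\delta$, which is engineered exactly so as to simultaneously dominate the polynomial prefactor $g(x_\delta)$ of the trace terms and the square-root prefactor $2\sqrt{2x_\delta}$ of the cross term. One should also keep in mind the admissible range of indices: \cref{Prop:Majoration3Termes} supplies these bounds on $\mathcal{A}_\delta$ for $i\in\{0,\dots,2K-1\}$, which is the range effectively invoked in the subsequent proofs.
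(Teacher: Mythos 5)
Your proposal is correct and is exactly the argument the paper has in mind: the paper omits the proof as ``a direct consequence of \cref{Prop:Majoration3Termes}'', and your proof spells out that direct consequence by restricting to $\mathcal{A}_\delta$ and dominating $g(x_\delta)$ and $2\sqrt{2x_\delta}$ by $C_\delta$ in $\mathbf{T}_{1,i}(x_\delta)$ and $\mathbf{T}_{2,i}(x_\delta)$. Your closing remark about the index range is also well taken, since the events in \cref{Prop:Majoration3Termes} are only defined for $i\in\{0,\dots,2K-1\}$ while the corollary is stated for $i\in\{0,\dots,p\}$ — a slight imprecision in the paper's statement, not in your argument.
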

The proof is a direct consequence of \cref{Prop:Majoration3Termes} and is thus omitted.

\subsection{Inversion of the estimated correlation matrix}
We first state the inversion of the matrix
\begin{equation}
\hat{R}:=D^{-\frac{1}{2}}\hat{\Theta}D^{-\frac{1}{2}}
\label{eq:hatR}
\end{equation}
with high probability. This matrix will play a central role in the proof displayed in Section \ref{s:proof}. Observe that we consider here the matrix $D$ and not its estimation.
\begin{lemma}\label{lem:R}
Suppose \ref{ass:inverse} and \ref{ass:signal} hold. Then, on the event $\mathcal{A}_\delta$ defined in  \cref{Prop:Majoration3Termes}, we have
\begin{align*}
    \rho_{\min}(\hat{R})\ge \frac{\rho_{\min}(R)}{2} \quad \mathrm{and} \quad \rho(\hat{R}-R)\le\rho(R).
\end{align*}
\end{lemma}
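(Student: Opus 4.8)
The plan is to exploit the factorization of $\hat R$ as a Gram matrix and reduce both inequalities to a single operator-norm estimate on the ``noise part''. Writing $\xi = \hat\sigma - \sigma$, \cref{lem:variances} gives $\xi \sim \N_p(0,\frac{\tau^2}{n}\Sigma)$, and $\hat G = G + E$ where $E$ is the matrix with columns $\Sigma^{i-1}\xi$, $i=1,\dots,K$. Introducing the $p\times K$ matrices $\tilde U = \Sigma^{1/2} G D^{-1/2}$ and $\tilde V = \Sigma^{1/2} E D^{-1/2}$, a direct computation gives $\tilde U^T\tilde U = D^{-1/2}\Theta D^{-1/2} = R$ and $(\tilde U+\tilde V)^T(\tilde U+\tilde V) = D^{-1/2}\hat\Theta D^{-1/2} = \hat R$. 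In particular $\|\tilde U\|_{\mathrm{op}}^2 = \rho(R)$, $\sigma_{\min}(\tilde U)^2 = \rho_{\min}(R)$, and everything reduces to bounding $\|\tilde V\|_{\mathrm{op}}$.

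The key estimate is $\|\tilde V\|_{\mathrm{op}}^2 \le C_\delta/t_{\delta,R}$. Since $\tilde V^T\tilde V$ is positive semidefinite,
\[
\|\tilde V\|_{\mathrm{op}}^2 = \rho(\tilde V^T\tilde V) \le \Tr(\tilde V^T\tilde V) = \sum_{i=1}^K \frac{\xi^T\Sigma^{2i-1}\xi}{\sigma^T\Sigma^{2i-1}\sigma}.
\]
On $\mathcal A_\delta$, the numerator is controlled by \eqref{eq13} of \cref{Cor:Majoration3Termes}, namely $\xi^T\Sigma^{2i-1}\xi \le C_\delta\frac{\tau^2}{n}\Tr(\Sigma^{2i})$; the elementary bound $\Tr(\Sigma^{2i}) \le \rho(\Sigma)^i\Tr(\Sigma^i)$ together with \ref{ass:signal} on the denominator then makes each summand at most $\frac{C_\delta}{t_{\delta,R}K}$, so the sum is at most $C_\delta/t_{\delta,R}$. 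Here the factor $K$ in \ref{ass:signal} is exactly what is needed so that summing $K$ diagonal terms leaves no residual $K$. Finally, since $x_\delta = \ln(6K/\delta) > 1$ one checks $C_\delta \le 5 x_\delta$, whence $C_\delta/t_{\delta,R} \le \frac{5}{128}\rho_{\min}(R)$.

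It remains to conclude. For the lower bound, for any unit vector $w$ the triangle inequality gives $\|(\tilde U+\tilde V)w\| \ge \sigma_{\min}(\tilde U) - \|\tilde V\|_{\mathrm{op}} = \sqrt{\rho_{\min}(R)} - \|\tilde V\|_{\mathrm{op}}$; since $\|\tilde V\|_{\mathrm{op}} \le \sqrt{5/128}\,\sqrt{\rho_{\min}(R)} \le (1-\tfrac1{\sqrt2})\sqrt{\rho_{\min}(R)}$, squaring and minimizing over $w$ yields $\rho_{\min}(\hat R)\ge \rho_{\min}(R)/2$. For the perturbation bound, $\hat R - R = \tilde U^T\tilde V + \tilde V^T\tilde U + \tilde V^T\tilde V$, so by submultiplicativity $\rho(\hat R - R) \le 2\|\tilde U\|_{\mathrm{op}}\|\tilde V\|_{\mathrm{op}} + \|\tilde V\|_{\mathrm{op}}^2 = 2\sqrt{\rho(R)}\,\|\tilde V\|_{\mathrm{op}} + \|\tilde V\|_{\mathrm{op}}^2$, and using $\|\tilde V\|_{\mathrm{op}}^2 \le \frac{5}{128}\rho_{\min}(R) \le \frac{5}{128}\rho(R)$ the right-hand side is at most $\big(2\sqrt{5/128}+5/128\big)\rho(R) < \rho(R)$.

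The main obstacle is the cross term $\tilde U^T\tilde V$, coming from the signal--noise interactions $\sigma^T\Sigma^{i+j-1}\xi$. A naive entrywise (Gershgorin or Frobenius) bound on $\hat R - R$ accumulates $K$ comparable off-diagonal contributions and produces a spurious factor $\sqrt K$ that \ref{ass:signal} cannot absorb; the factorization $\hat R = (\tilde U+\tilde V)^T(\tilde U+\tilde V)$ is precisely what replaces this by the submultiplicative estimate $\|\tilde U^T\tilde V\|_{\mathrm{op}} \le \|\tilde U\|_{\mathrm{op}}\|\tilde V\|_{\mathrm{op}}$, confining all the delicate analysis to the single scalar $\|\tilde V\|_{\mathrm{op}}$.
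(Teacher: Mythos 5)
Your proof is correct, but it packages the perturbation argument differently from the paper, and the comparison is instructive. Both proofs hinge on exactly the same scalar: the paper bounds $\rho\bigl(D^{-1/2}(\hat G-G)^T\Sigma(\hat G-G)D^{-1/2}\bigr)=\|\tilde V\|_{\mathrm{op}}^2$ by $C_\delta/t_{\delta,R}$, just as you do, but it gets there via the entrywise inequality $\rho(A)\le K\max_{i,j}|A_{ij}|$ combined with \cref{Cor:Majoration3Termes} and the Cauchy--Schwarz bound $\Tr(\Sigma^{i+j})\le\sqrt{\Tr(\Sigma^{2i})}\sqrt{\Tr(\Sigma^{2j})}$, whereas you bound the spectral radius by the trace, which needs only the $K$ diagonal terms and the elementary inequality $\Tr(\Sigma^{2i})\le\rho(\Sigma)^i\Tr(\Sigma^i)$ --- slightly leaner, since off-diagonal noise terms never appear. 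The real divergence is downstream: the paper works with quadratic forms, expanding $x^T\hat R x$ into signal, noise and cross terms and absorbing the cross term by $2ab\le a^2+b^2$ (at the cost of the factors $\tfrac34$ and $4$, and of the threshold $t_{\delta,R}\ge 16C_\delta/\rho_{\min}(R)$); you instead exploit the Gram factorizations $R=\tilde U^T\tilde U$, $\hat R=(\tilde U+\tilde V)^T(\tilde U+\tilde V)$ and conclude by singular-value perturbation, $\sigma_{\min}(\tilde U+\tilde V)\ge\sigma_{\min}(\tilde U)-\|\tilde V\|_{\mathrm{op}}$, and submultiplicativity for the cross term $\tilde U^T\tilde V$. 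Your numerical checks are sound: $C_\delta\le 5x_\delta$ (valid since $x_\delta=\ln(6K/\delta)>1$) gives $\|\tilde V\|_{\mathrm{op}}^2\le\tfrac{5}{128}\rho_{\min}(R)$, and indeed $\sqrt{5/128}\le 1-1/\sqrt2$ and $2\sqrt{5/128}+5/128<1$. A minor bonus of your route: the second conclusion controls the full spectral norm of $\hat R-R$ (both signs of the spectrum), while the paper's quadratic-form argument only bounds $x^T(\hat R-R)x$ from above, i.e.\ the largest eigenvalue; both suffice for how the lemma is used. One caveat: your closing remark about a ``spurious $\sqrt K$'' from entrywise bounds does not apply to the paper's actual proof, since its AM--GM step removes the signal--noise cross terms before any entrywise bound is invoked; the entrywise bound is applied only to the purely quadratic noise matrix, where the factor $K$ is exactly absorbed by \ref{ass:signal}, just as in your trace argument.
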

\begin{proof}
Let $x\in \RR^{K}$ such that $x^Tx=1$. Then 
\begin{align*}
	x^T\hat{R}x&=x^TD^{-\frac{1}{2}}\hat{G}^T\Sigma\hat{G}D^{-\frac{1}{2}}x\\
	&=x^TD^{-\frac{1}{2}}(\hat{G}-G)^T\Sigma(\hat{G}-G)D^{-\frac{1}{2}}x+x^TRx+2x^TD^{-\frac{1}{2}}(\hat{G}-G)^{T}\Sigma G D^{-\frac{1}{2}}x\\
	&\ge x^TRx-2|x^TD^{-\frac{1}{2}}(\hat{G}-G)\Sigma GD^{-\frac{1}{2}}x|.
\end{align*}
Applying inequality $2ab\le a^2+b^2$ with well chosen $a,b$,
\begin{align}
\nonumber	x^T\hat{R}x&\ge x^TRx-\frac{1}{4}x^TRx-4x^TD^{-\frac{1}{2}}(\hat{G}-G)^T\Sigma(\hat{G}-G)D^{-\frac{1}{2}}x\\
\label{eq:ineqRhat}	&\ge \frac{3}{4}\rho_{\min}(R)-4\rho(D^{-\frac{1}{2}}(\hat{G}-G)^T\Sigma(\hat{G}-G)D^{-\frac{1}{2}}).
\end{align}
We now seek for an upper bound of $\rho(D^{-\frac{1}{2}}(\hat{G}-G)^T\Sigma(\hat{G}-G)D^{-\frac{1}{2}})$. We use the classic inequality 
\begin{equation}
\rho(A)\le K\underset{1\le i,j\le K}{\max}|A_{ij}|,
\label{eq:inequA}
\end{equation}
for any positive semi-definite matrix $A \in \mathbb{R}^{K\times K}$. In our setting, this writes as 
$$\rho(D^{-\frac{1}{2}}(\hat{G}-G)^T\Sigma(\hat{G}-G)D^{-\frac{1}{2}})\le K \underset{1\le i,j\le K}{\max}\frac{(\hat{\sigma}-\sigma)^T\Sigma^{i+j-1}(\hat{\sigma}-\sigma)}{\sqrt{\sigma^T\Sigma^{2i-1}\sigma}\sqrt{\sigma^T\Sigma^{2j-1}\sigma}}.$$
Applying successively \cref{Cor:Majoration3Termes} and Cauchy-Schwarz inequality, we obtain that, on the set $\mathcal{A}_\delta$,
\begin{align*}
    \rho(D^{-\frac{1}{2}}(\hat{G}-G)^T\Sigma(\hat{G}-G)D^{-\frac{1}{2}})&\le \underset{1\le i,j\le K}{\max}\frac{C_{\delta} K\frac{\tau^2}{n}\Tr(\Sigma^{i+j})}{\sqrt{\sigma^T\Sigma^{2i-1}\sigma}\sqrt{\sigma^T\Sigma^{2j-1}\sigma}}\\
    &\le C_{\delta}\underset{1\le i,j\le K}{\max}\frac{\sqrt{K\frac{\tau^2}{n}\Tr(\Sigma^{2i})}}{\sqrt{\sigma^T\Sigma^{2i-1}\sigma}}\frac{\sqrt{K\frac{\tau^2}{n}\Tr(\Sigma^{2j})}}{\sqrt{\sigma^T\Sigma^{2j-1}\sigma}}\\
    &\le \frac{C_{\delta}}{t_{\delta,R}},
\end{align*}
where the constant $t_{\delta,R}$ is defined in \ref{ass:signal}.
Hence, \eqref{eq:ineqRhat} becomes $$ 
x^T\hat{R}x\geq \frac{3}{4}\rho_{\min}(R)-4 \frac{C_{\delta}}{t_{\delta,R}}.$$
Since $t_{\delta,R}\geq  \frac{16\,C_\delta}{\rho_{\min}(R)}$, it follows that $\rho_{\min}(\hat{R})\ge \frac{\rho_{\min}(R)}{2}$.

Let us now prove the second inequality of \cref{lem:R}. We have
\begin{align*}
    x^T(\hat{R}-R)x&=x^TD^{-\frac{1}{2}}(\hat{\Theta}-\Theta)D^{-\frac{1}{2}}x\\
	&=x^TD^{-\frac{1}{2}}(\hat{G}^T\Sigma\hat{G}-G^T\Sigma G)D^{-\frac{1}{2}}x\\
	&=x^TD^{-\frac{1}{2}}\big((\hat{G}-G)^T\Sigma(\hat{G}-G)+2G^T\Sigma(\hat{G}-G)\big)D^{-\frac{1}{2}}x\\
	&\le x^TD^{-\frac{1}{2}}(\hat{G}-G)^T\Sigma(\hat{G}-G)D^{-\frac{1}{2}}x+2|x^TD^{-\frac{1}{2}}G^T\Sigma(\hat{G}-G)D^{-\frac{1}{2}}x|.
	\end{align*}
Using again inequality $2ab\leq a^2+b^2$ for any real $a,b$,
\begin{align*}
    x^T(\hat{R}-R)x	
	&\le \rho(D^{-\frac{1}{2}}(\hat{G}-G)^T\Sigma(\hat{G}-G)D^{-\frac{1}{2}})+\frac{1}{2}x^TRx + 2x^TD^{-\frac{1}{2}}(\hat{G}-G)^T\Sigma(\hat{G}-G)D^{-\frac{1}{2}}x\\
	&\le 3\rho(D^{-\frac{1}{2}}(\hat{G}-G)^T\Sigma(\hat{G}-G)D^{-\frac{1}{2}})+\frac{1}{2}\rho(R).
\end{align*}
Hence, $ x^T(\hat{R}-R)x\le 3\frac{C_{\delta}}{t_{\delta,R}}+\frac{1}{2}\rho(R)$ on the set $\mathcal{A}_\delta$. We deduce that $\rho(\hat{R}-R)\le\rho(R)$ since $t_{\delta,R}\geq  \frac{16\,C_\delta}{\rho_{\min}(R)}$.
\end{proof}

\subsection{Upper bound on three different terms}
Recall that $\Lambda=\Theta^{-1}G^T\sigma$.

\begin{lemma} \label{lem:termI}
On the event $\mathcal{A}_\delta$ defined in  \cref{Prop:Majoration3Termes}, 
$$ \mathrm{I}:=\Lambda^T(G-\hat{G})^T\Sigma(G-\hat{G})\Lambda \le C_{\delta}\frac{\tau^2}{n}\left(\sum_{i=1}^{K}|\Lambda_i|\sqrt{\Tr(\Sigma^{2i})}\right)^2.$$
\end{lemma}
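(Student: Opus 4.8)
The plan is to reduce $\mathrm{I}$ to the squared norm of a single vector in $\RR^p$ and then estimate that norm termwise via the deviation bound already established. First I would read off the columns of the two Krylov matrices: the $i$-th column of $G$ is $\Sigma^{i-1}\sigma$ and that of $\hat G$ is $\Sigma^{i-1}\hat\sigma$, so the $i$-th column of $G-\hat G$ is $\Sigma^{i-1}(\sigma-\hat\sigma)$. Setting $e:=\hat\sigma-\sigma$, this gives
$$(G-\hat G)\Lambda=-\sum_{i=1}^{K}\Lambda_i\,\Sigma^{i-1}e,$$
and therefore $\mathrm{I}=\Lambda^T(G-\hat G)^T\Sigma(G-\hat G)\Lambda=\|\Sigma^{1/2}(G-\hat G)\Lambda\|^2$, using that $\Sigma$ is positive semidefinite so that $\Sigma^{1/2}$ is well defined and commutes with the powers $\Sigma^{i-1}$.

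Next I would apply the triangle inequality to the vector $\Sigma^{1/2}(G-\hat G)\Lambda=-\sum_{i=1}^{K}\Lambda_i\,\Sigma^{i-1/2}e$, which yields
$$\big\|\Sigma^{1/2}(G-\hat G)\Lambda\big\|\le\sum_{i=1}^{K}|\Lambda_i|\,\big\|\Sigma^{i-1/2}e\big\|.$$
The relevant identity here is $\|\Sigma^{i-1/2}e\|^2=e^T\Sigma^{2i-1}e=(\hat\sigma-\sigma)^T\Sigma^{2i-1}(\hat\sigma-\sigma)$, which is precisely the quadratic form controlled by \cref{Cor:Majoration3Termes}.

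The one substantive step is then to invoke the deviation bound \eqref{eq13} on the event $\mathcal{A}_\delta$ with exponent $2i-1$; as $i$ ranges over $\{1,\dots,K\}$ these exponents are $1,3,\dots,2K-1$, all within the admissible range of \cref{Prop:Majoration3Termes}. This gives $\|\Sigma^{i-1/2}e\|^2\le C_\delta\frac{\tau^2}{n}\Tr(\Sigma^{2i})$, hence $\|\Sigma^{i-1/2}e\|\le\sqrt{C_\delta}\sqrt{\tau^2/n}\sqrt{\Tr(\Sigma^{2i})}$. Substituting into the triangle inequality, factoring the common constant $\sqrt{C_\delta}\sqrt{\tau^2/n}$ out of the sum, and squaring produces exactly
$$\mathrm{I}\le C_\delta\frac{\tau^2}{n}\bigg(\sum_{i=1}^{K}|\Lambda_i|\sqrt{\Tr(\Sigma^{2i})}\bigg)^2.$$
I do not anticipate any real obstacle here: the only points requiring care are matching the Krylov column index $i$ to the shifted exponent $2i-1$ in the corollary, and observing that the same constant $C_\delta$ bounds every summand so that it can be pulled out before squaring.
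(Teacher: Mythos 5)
Your proof is correct, and it reaches the paper's bound by a genuinely different mechanism. The paper expands $\mathrm{I}$ into the double sum $\sum_{i,j=1}^{K}\Lambda_i\Lambda_j(\hat{\sigma}-\sigma)^T\Sigma^{i+j-1}(\hat{\sigma}-\sigma)$, applies the deviation bound \eqref{eq13} to \emph{every} cross exponent $i+j-1\in\{1,\dots,2K-1\}$, and then factorizes the resulting double sum through the trace inequality $\Tr(\Sigma^{i+j})\le\sqrt{\Tr(\Sigma^{2i})}\sqrt{\Tr(\Sigma^{2j})}$. You instead keep $\mathrm{I}$ as the squared norm $\|\Sigma^{1/2}(G-\hat{G})\Lambda\|^2$, dispose of all cross terms deterministically via the triangle inequality applied to the realized vectors $\Sigma^{i-1/2}(\hat{\sigma}-\sigma)$, and invoke \eqref{eq13} only for the $K$ diagonal quadratic forms with odd exponents $2i-1$; your index bookkeeping is right, since these exponents stay within the range covered by the events $\mathcal{B}_{i,\delta}$ of \cref{Prop:Majoration3Termes}. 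The two arguments are Cauchy--Schwarz applied at different levels (vectors in $\mathbb{R}^p$ for you, traces of powers of $\Sigma$ for the paper) and yield the identical constant. What your version buys is that the probabilistic input is confined to $K$ quadratic forms rather than all $2K-1$ exponents, so in principle it could run on a smaller event with a lighter union bound and a slightly smaller $C_\delta$; in the context of the paper this gains nothing, however, because the full event $\mathcal{A}_\delta$ is needed anyway for the terms $\mathrm{II}$, $\mathrm{III}$, $T_{11}$ and $T_{12}$, whose proofs do require the mixed exponents $i+j-1$.
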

\begin{proof}
First, we can remark that  
$$\I=\sum_{i,j=1}^{K}\Lambda_i\Lambda_j(\hat{\sigma}-\sigma)\Sigma^{i+j-1}(\hat{\sigma}-\sigma).$$
Then, \cref{Cor:Majoration3Termes} states that, on the event $\mathcal{A}_\delta$,
$$\I\le C_{\delta}\frac{\tau^2}{n}\sum_{i,j=1}^{K}|\Lambda_i| |\Lambda_j|\Tr(\Sigma^{i+j}).$$ Using Cauchy-Schwarz inequality, we obtain
\begin{align*}
	\I 	&\le C_{\delta}\frac{\tau^2}{n}\sum_{i,j=1}^{K}|\Lambda_i| |\Lambda_j| \sqrt{\Tr(\Sigma^{2i})}\sqrt{\Tr(\Sigma^{2j})}
	= C_{\delta}\frac{\tau^2}{n}\left(\sum_{i=1}^{K}|\Lambda_i|\sqrt{\Tr(\Sigma^{2i})}\right)^2.
\end{align*}
\end{proof}

\begin{lemma} \label{lem:termII}
Suppose \ref{ass:signal} is satisfied.
On the event $\mathcal{A}_\delta$ defined in \cref{Prop:Majoration3Termes}, 
$$\mathrm{II}:=\Lambda^T(\hat{\Theta}-\Theta)D^{-1}(\hat{\Theta}-\Theta)\Lambda \le 2\frac{\tau^2}{n}\frac{C_{\delta}^2}{t_{\delta,R}}\left(\sum_{j=1}^{K}\sqrt{\Tr(\Sigma^{2j})}\lvert\Lambda_j\rvert\right)^2+2\frac{\tau^2}{n}C_{\delta}^2\rho(R)\|\tilde{\Lambda}\|^2\sum_{j=1}^{K}\frac{\rho(\Sigma)^{2j}}{\sigma^T\Sigma^{2j-1}\sigma},$$
where $\tilde{\Lambda}=D^{\frac{1}{2}}\Lambda=(\sqrt{\sigma^T \Sigma^{2l-1}\sigma}\times \Lambda_l)_{l=1..K}$.
\end{lemma}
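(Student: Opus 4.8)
The plan is to bound the quadratic form $\mathrm{II}=\Lambda^T(\hat\Theta-\Theta)D^{-1}(\hat\Theta-\Theta)\Lambda$ by first expanding the deviation $\hat\Theta-\Theta$ into the sum of a ``second order'' term and a ``cross'' term, exactly as was done for $\hat R-R$ in the proof of \cref{lem:R}. Writing $\hat G=G+(\hat G-G)$, the entries of $\hat\Theta-\Theta=\hat G^T\Sigma\hat G-G^T\Sigma G$ decompose as
\begin{equation*}
\hat\Theta-\Theta=(\hat G-G)^T\Sigma(\hat G-G)+2\,\mathrm{Sym}\!\left(G^T\Sigma(\hat G-G)\right),
\end{equation*}
where $\mathrm{Sym}(M)=\tfrac12(M+M^T)$. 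The idea is to set $u:=(\hat\Theta-\Theta)\Lambda$ and note $\mathrm{II}=u^TD^{-1}u$, so it suffices to control each entry $u_j=\sum_{i}(\hat\Theta-\Theta)_{ij}\Lambda_i$ and then weight by $D^{-1}_{jj}=1/(\sigma^T\Sigma^{2j-1}\sigma)$. Because $(a+b)^2\le 2a^2+2b^2$, the factor $2$ in front of the two summands in the statement signals that I split $u=u^{(1)}+u^{(2)}$ (second order part and cross part) and bound $u^TD^{-1}u\le 2\,(u^{(1)})^TD^{-1}u^{(1)}+2\,(u^{(2)})^TD^{-1}u^{(2)}$.

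First I would handle the second order contribution. Here each relevant entry is of the form $\sum_i\Lambda_i(\hat\sigma-\sigma)^T\Sigma^{i+j-1}(\hat\sigma-\sigma)$, which \cref{Cor:Majoration3Termes} bounds on $\mathcal A_\delta$ by $C_\delta\frac{\tau^2}{n}\sum_i|\Lambda_i|\Tr(\Sigma^{i+j})$. After weighting by $D^{-1}$ and applying Cauchy--Schwarz in the same way as in \cref{lem:termI}, together with the inertia lower bound from \ref{ass:signal} (which produces the factor $1/t_{\delta,R}$), this term should collapse to the first summand $2\frac{\tau^2}{n}\frac{C_\delta^2}{t_{\delta,R}}\big(\sum_{j}\sqrt{\Tr(\Sigma^{2j})}|\Lambda_j|\big)^2$. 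The bookkeeping is that one power of $C_\delta\tau^2/n$ comes from the deviation estimate and the second $C_\delta/t_{\delta,R}$ factor comes from dividing $\sqrt{\Tr(\Sigma^{2i})K\tau^2/n}$ by $\sqrt{\sigma^T\Sigma^{2i-1}\sigma}$ via \ref{ass:signal}.

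Next I would treat the cross term $2\,G^T\Sigma(\hat G-G)$. Its $(i,j)$ entry involves $\sigma^T\Sigma^{i+j-1}(\hat\sigma-\sigma)$, which by Cauchy--Schwarz in the $\Sigma^{i+j-1}$ inner product is at most $\sqrt{\sigma^T\Sigma^{i+j-1}\sigma}\,\sqrt{(\hat\sigma-\sigma)^T\Sigma^{i+j-1}(\hat\sigma-\sigma)}$, and the second factor is controlled by \cref{Cor:Majoration3Termes}. Re-grouping in terms of $\tilde\Lambda=D^{1/2}\Lambda$ (so that $\Lambda_j=\tilde\Lambda_j/\sqrt{\sigma^T\Sigma^{2j-1}\sigma}$) and using $\rho(\Sigma^{i+j-1})\le\rho(\Sigma)^{i+j-1}$ should match this against $\rho(R)\|\tilde\Lambda\|^2\sum_j\frac{\rho(\Sigma)^{2j}}{\sigma^T\Sigma^{2j-1}\sigma}$, where the appearance of $\rho(R)$ (equivalently an operator norm of $\Theta$ in the $D$-normalization) absorbs the coupling between indices. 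The main obstacle I anticipate is precisely this cross term: keeping the correct powers of $\rho(\Sigma)$ and correctly passing between the $\Lambda$ and $\tilde\Lambda$ normalizations while extracting the $\rho(R)$ factor requires care, since a naive Cauchy--Schwarz loses the spectral structure that lets $\rho(R)$ rather than a cruder bound appear. Once both pieces are in hand, summing them with the $2$'s from the elementary inequality yields the stated bound.
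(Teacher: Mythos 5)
Your handling of the second-order part is correct: bounding $\sum_i\Lambda_i(\hat\sigma-\sigma)^T\Sigma^{i+j-1}(\hat\sigma-\sigma)$ with \eqref{eq13} of \cref{Cor:Majoration3Termes}, using $\Tr(\Sigma^{i+j})\le\sqrt{\Tr(\Sigma^{2i})}\sqrt{\Tr(\Sigma^{2j})}$, and then invoking \ref{ass:signal} to convert $\frac{\tau^2}{n}\Tr(\Sigma^{2j})/(\sigma^T\Sigma^{2j-1}\sigma)$ into $1/(K t_{\delta,R})$ does produce the first summand, exactly as in the paper. The genuine gap is in your cross term. You propose the deterministic Cauchy--Schwarz bound $|\sigma^T\Sigma^{i+j-1}(\hat\sigma-\sigma)|\le\sqrt{\sigma^T\Sigma^{i+j-1}\sigma}\,\sqrt{(\hat\sigma-\sigma)^T\Sigma^{i+j-1}(\hat\sigma-\sigma)}$ followed by \eqref{eq13}, which yields the factor $\sqrt{C_\delta\frac{\tau^2}{n}\Tr(\Sigma^{i+j})}$. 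But the lemma requires $\rho(\Sigma)^{(i+j)/2}$ in that slot, not $\sqrt{\Tr(\Sigma^{i+j})}$. The scalar $\sigma^T\Sigma^{i+j-1}(\hat\sigma-\sigma)$ is a one-dimensional Gaussian whose standard deviation is $\sqrt{\frac{\tau^2}{n}\sigma^T\Sigma^{2(i+j)-1}\sigma}\le\sqrt{\frac{\tau^2}{n}}\,\rho(\Sigma)^{\frac{i+j}{2}}\sqrt{\sigma^T\Sigma^{i+j-1}\sigma}$; your Cauchy--Schwarz step throws this one-dimensionality away and replaces it by a full quadratic form of the noise, whose size on $\mathcal{A}_\delta$ is unavoidably of order the trace. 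Since $\Tr(\Sigma^{i+j})$ exceeds $\rho(\Sigma)^{i+j}$ by up to the effective rank of $\Sigma$, carrying your bound through the remaining steps gives at best $2\frac{\tau^2}{n}C_\delta\,\rho(R)\|\tilde\Lambda\|^2\sum_{j}\Tr(\Sigma^{2j})/(\sigma^T\Sigma^{2j-1}\sigma)$, which is strictly weaker than the stated second summand: the lemma as stated is not proved by this route.

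The paper avoids the algebraic decomposition of $\hat\Theta-\Theta$ altogether. It bounds each entry $|\hat\Theta_{kj}-\Theta_{kj}|=|\hat\sigma^T\Sigma^{k+j-1}\hat\sigma-\sigma^T\Sigma^{k+j-1}\sigma|$ directly by \eqref{eq12} of \cref{Cor:Majoration3Termes}, whose two terms are precisely a trace (noise$\times$noise) part and a signal$\times$noise part $C_\delta\sqrt{\frac{\tau^2}{n}}\rho(\Sigma)^{\frac{k+j}{2}}\sqrt{\sigma^T\Sigma^{k+j-1}\sigma}$ already carrying the correct power of $\rho(\Sigma)$; the split via $(a+b)^2\le 2a^2+2b^2$ is performed only afterwards. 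Then, for the signal$\times$noise part, the $\rho(R)$ factor -- the step you flagged as the main obstacle but did not actually carry out -- comes from writing $\rho(\Sigma)^{\frac{j+k}{2}}\sqrt{\sigma^T\Sigma^{j+k-1}\sigma}\,|\Lambda_j|=\rho(\Sigma)^{\frac{j+k}{2}}\frac{\sqrt{\sigma^T\Sigma^{j+k-1}\sigma}}{\sqrt{\sigma^T\Sigma^{2j-1}\sigma}}\,\tilde\Lambda_j$, applying Cauchy--Schwarz over $j$, and recognizing the resulting double sum as $v^TRv\le\rho(R)\|v\|^2$ with $v_j=\rho(\Sigma)^j/\sqrt{\sigma^T\Sigma^{2j-1}\sigma}$. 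If you insist on your decomposition, note that on $\mathcal{A}_\delta$ the only control of the cross term that retains $\rho(\Sigma)^{(k+j)/2}$ is the triangle inequality $2|\sigma^T\Sigma^{k+j-1}(\hat\sigma-\sigma)|\le|\hat\sigma^T\Sigma^{k+j-1}\hat\sigma-\sigma^T\Sigma^{k+j-1}\sigma|+(\hat\sigma-\sigma)^T\Sigma^{k+j-1}(\hat\sigma-\sigma)$ combined with \eqref{eq12}--\eqref{eq13}, which simply reconstitutes the paper's direct argument.
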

\begin{proof}
First note that
\begin{align*}
\II & =\sum_{k=1}^{K}\frac{1}{\sigma^T\Sigma^{2k-1}\sigma}\left(\sum_{j=1}^{K}(\hat{\Theta}_{kj}-\Theta_{kj})\Lambda_{j}\right)^2\\ 
& \leq \sum_{k=1}^{K}\frac{1}{\sigma^T\Sigma^{2k-1}\sigma}\left(\sum_{j=1}^{K}|\hat\sigma^T \Sigma^{k+j-1}\hat\sigma - \sigma^T \Sigma^{k+j-1}\sigma|\times |\Lambda_{j}|\right)^2.
\end{align*}
With \cref{Cor:Majoration3Termes}, on the event $\mathcal{A}_\delta$,
$$ \II \le\sum_{k=1}^{K}\frac{1}{\sigma^T\Sigma^{2k-1}\sigma}\left(\sum_{j=1}^{K}\left(C_{\delta}\frac{\tau^2}{n}\Tr(\Sigma^{j+k})+C_{\delta}\sqrt{\frac{\tau^2}{n}}\rho(\Sigma)^{\frac{j+k}{2}}\sqrt{\sigma^T\Sigma^{j+k-1}\sigma}\right)|\Lambda_{j}|\right)^2.$$
Then,
\begin{align*}
    \II 
    &\le2\sum_{k=1}^{K}\frac{1}{\sigma^T\Sigma^{2k-1}\sigma}\left(C_{\delta}\sum_{j=1}^{K}\frac{\tau^2}{n}\Tr(\Sigma^{j+k})|\Lambda_j|\right)^2\\
    & \quad +2\sum_{k=1}^{K}\frac{1}{\sigma^T\Sigma^{2k-1}\sigma}\left(C_{\delta}\sum_{j=1}^{K}\sqrt{\frac{\tau^2}{n}}\rho(\Sigma)^{\frac{j+k}{2}}\sqrt{\sigma^T\Sigma^{j+k-1}\sigma}|\Lambda_j|\right)^2\\
    &\le 2C_{\delta}^{2}\sum_{k=1}^{K}\frac{\frac{\tau^2}{n}\Tr(\Sigma^{2k})}{\sigma^T\Sigma^{2k-1}\sigma}\left(\sum_{j=1}^{K}\sqrt{\frac{\tau^2}{n}}\sqrt{\Tr(\Sigma^{2j})}|\Lambda_j|\right)^2\\
    &\quad +2\frac{\tau^2}{n}C_{\delta}^{2}\sum_{k=1}^{K}\frac{1}{\sigma^T\Sigma^{2k-1}\sigma}\left(\sum_{j=1}^{K}\rho(\Sigma)^{\frac{j+k}{2}}\frac{\sqrt{\sigma^T\Sigma^{j+k-1}\sigma}}{\sqrt{\sigma^T\Sigma^{2j-1}\sigma}}\cdot\sqrt{\sigma^T\Sigma^{2j-1}\sigma}|\Lambda_j| \right)^2.
\end{align*}
Using \ref{ass:signal} for the first term and Cauchy-Schwarz inequality for the second term, we get
\begin{align*}
   \II &\le 2C_{\delta}^2\sum_{k=1}^{K}\frac{1}{K t_{\delta,R}}\frac{\tau^2}{n}\left(\sum_{j=1}^{K}\sqrt{\Tr(\Sigma^{2j})}\Lambda_j\right)^2 \\
    & \quad +2\frac{\tau^2}{n}C_{\delta}^{2}\sum_{k=1}^{K}\frac{1}{\sigma^T\Sigma^{2k-1}\sigma}\sum_{j=1}^{K}\rho(\Sigma)^{j+k}\frac{\sigma^T\Sigma^{j+k-1}\sigma}{\sigma^T\Sigma^{2j-1}\sigma}\sum_{l=1}^{K}\sigma^T\Sigma^{2l-1}\sigma\,\Lambda_l^2\\
    &\le 2\frac{C_{\delta}^2}{t_{\delta,R}}\frac{\tau^2}{n}\left(\sum_{j=1}^{K}\sqrt{\Tr(\Sigma^{2j})}|\Lambda_j|\right)^2 \\
    & \quad+2\frac{\tau^2}{n}C_{\delta}^{2}\sum_{j,k=1}^{K}\frac{\rho(\Sigma)^{j}}{\sqrt{\sigma^T\Sigma^{2j-1}\sigma}}\frac{\sigma^T\Sigma^{j+k-1}\sigma}{\sqrt{\sigma^T\Sigma^{2k-1}\sigma}\sqrt{\sigma^T\Sigma^{2j-1}\sigma}}\frac{\rho(\Sigma)^{k}}{\sqrt{\sigma^T\Sigma^{2k-1}\sigma}}\|\tilde{\Lambda}\|^2.
\end{align*}
Let $v\in\RR^K$ the vector defined as $v_j=\frac{\rho(\Sigma)^j}{\sqrt{\sigma^T\Sigma^{2j-1}\sigma}}$ for all $j\in\lbrace 1,\dots, K \rbrace$. By definition of the matrix $R$ (see \eqref{eq:Rmatrix}), the second term on the right-hand side writes as
$$2\frac{\tau^2}{n}C_{\delta}^{2}v^TRv\|\tilde{\Lambda}\|^2\\
    \le 2\frac{\tau^2}{n}C_{\delta}^2\rho(R)\|v\|^2\|\tilde{\Lambda}\|^2.$$ \cref{lem:termII} follows.
\end{proof}

Denote \begin{equation}\label{eq:defLbar}
    \bar\Lambda=D^{-1}G^T\sigma = \left( \frac{\sigma^T\Sigma^{i-1}\sigma}{\sigma^T\Sigma^{2i-1}\sigma}\right)_{i=1\dots K},
\end{equation} the norm of the marginal projection. That is, the projection of $Y$ on the $i^\text{th}$ vector of the Krylov space, with a normalized vector, without projecting on other dimensions. If $R$ is ill-conditioned, $\Lambda$ and $\bar\Lambda$ differ much, and one cannot retrieve the information on $\Lambda$ from $\bar \Lambda$.

\begin{lemma} \label{lem:termIII}
Suppose that \ref{ass:inverse} and \ref{ass:signal} hold. Then, on the event $\mathcal{A}_\delta$ defined in  \cref{Prop:Majoration3Termes}, 
\begin{align*}
\mathrm{III} &:=(\hat{G}^T\hat{\sigma}-G^T\sigma)^T\Theta^{-1}(\hat{G}^T\hat{\sigma}-G^T\sigma) \\ 
& \le 2\frac{C_{\delta}^2}{\rho_{\min}(R)}\frac{1}{K\,t_{\delta,R}}\frac{\tau^2}{n}\sum_{i=1}^{K}\Tr(\Sigma^{i})\bar\Lambda_i +2\frac{C_{\delta}^2}{\rho_{\min}(R)}\frac{\tau^2}{n}\sum_{i=1}^{K}\rho(\Sigma)^{i}\bar{\Lambda_{i}}.
\end{align*}
\end{lemma}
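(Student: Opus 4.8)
The plan is to view $\mathrm{III}$ as a quadratic form in the vector $w:=\hat{G}^T\hat{\sigma}-G^T\sigma\in\RR^K$, whose $i$-th coordinate is exactly $w_i=\hat{\sigma}^T\Sigma^{i-1}\hat{\sigma}-\sigma^T\Sigma^{i-1}\sigma$, a quantity already controlled by \cref{Cor:Majoration3Termes}. Writing $\Theta=D^{\frac12}RD^{\frac12}$ so that $\Theta^{-1}=D^{-\frac12}R^{-1}D^{-\frac12}$, and using \ref{ass:inverse} to bound $\rho(R^{-1})=\rho_{\min}(R)^{-1}$, I would first reduce the claim to a diagonal estimate:
$$\mathrm{III}=(D^{-\frac12}w)^TR^{-1}(D^{-\frac12}w)\le\frac{1}{\rho_{\min}(R)}\sum_{i=1}^K\frac{w_i^2}{\sigma^T\Sigma^{2i-1}\sigma},$$
since the $i$-th diagonal entry of $D$ is $\Theta_{ii}=\sigma^T\Sigma^{2i-1}\sigma$. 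This isolates the dependence on $R$ in the prefactor $\rho_{\min}(R)^{-1}$ and leaves a sum of scalar terms to estimate.

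Next I would control each $w_i$ on $\mathcal{A}_\delta$ by applying \cref{Cor:Majoration3Termes} with the index shifted to $i-1$, which gives
$$|w_i|\le C_\delta\Big(\tfrac{\tau^2}{n}\Tr(\Sigma^{i})+\sqrt{\tfrac{\tau^2}{n}}\,\rho(\Sigma)^{i/2}\sqrt{\sigma^T\Sigma^{i-1}\sigma}\Big),$$
using $\|\Sigma^{(i-1)/2}\sigma\|=\sqrt{\sigma^T\Sigma^{i-1}\sigma}$. Squaring with $(a+b)^2\le 2a^2+2b^2$ and dividing by $\sigma^T\Sigma^{2i-1}\sigma$ splits $w_i^2/(\sigma^T\Sigma^{2i-1}\sigma)$ into two pieces. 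The second piece is immediately $2C_\delta^2\frac{\tau^2}{n}\rho(\Sigma)^i\,\bar\Lambda_i$, by the very definition \eqref{eq:defLbar} of $\bar\Lambda_i=\sigma^T\Sigma^{i-1}\sigma/\sigma^T\Sigma^{2i-1}\sigma$, and matches the second sum in the statement.

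The remaining piece, $2C_\delta^2\frac{\tau^4}{n^2}\Tr(\Sigma^i)^2/(\sigma^T\Sigma^{2i-1}\sigma)$, is where the real work lies and is the step I expect to be the main obstacle, since its shape (a squared trace) does not visibly resemble the target $\Tr(\Sigma^i)\bar\Lambda_i$. I would factor out one copy of $\frac{\tau^2}{n}\Tr(\Sigma^i)$ and absorb it through \ref{ass:signal} in the form $\frac{\tau^2}{n}\Tr(\Sigma^i)\le \sigma^T\Sigma^{2i-1}\sigma/(K\,t_{\delta,R}\,\rho(\Sigma)^i)$, which cancels one factor $\sigma^T\Sigma^{2i-1}\sigma$ and produces the constant $1/(K\,t_{\delta,R})$ together with a leftover $\frac{\tau^2}{n}\Tr(\Sigma^i)/\rho(\Sigma)^i$. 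The final link is the Loewner comparison $\Sigma^{2i-1}\preceq\rho(\Sigma)^i\Sigma^{i-1}$ (itself a consequence of $\Sigma^i\preceq\rho(\Sigma)^iI$), which yields $1/\rho(\Sigma)^i\le\bar\Lambda_i$ and hence bounds this piece by $2C_\delta^2\frac{1}{K\,t_{\delta,R}}\frac{\tau^2}{n}\Tr(\Sigma^i)\bar\Lambda_i$. Summing over $i\in\{1,\dots,K\}$ and restoring the $\rho_{\min}(R)^{-1}$ prefactor then produces exactly the two sums claimed in \cref{lem:termIII}.
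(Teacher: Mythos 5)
Your proposal is correct and follows essentially the same route as the paper's proof: the reduction $\Theta^{-1}=D^{-\frac{1}{2}}R^{-1}D^{-\frac{1}{2}}$ with the $\rho_{\min}(R)^{-1}$ prefactor, the coordinate-wise application of \cref{Cor:Majoration3Termes} at index $i-1$, the splitting $(a+b)^2\le 2a^2+2b^2$, the absorption of one factor $\frac{\tau^2}{n}\Tr(\Sigma^i)$ via \ref{ass:signal}, and your Loewner comparison $\Sigma^{2i-1}\preceq\rho(\Sigma)^i\Sigma^{i-1}$, which is precisely the paper's inequality \eqref{eq:ratioS}. The step you flagged as the main obstacle is handled in the paper exactly as you handled it, so there is nothing to add.
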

\begin{proof}
First, \cref{Cor:Majoration3Termes} gives
\begin{align*}
 	\III	&=
 	(\hat{\sigma}^T\hat{G}-\sigma^TG)^TD^{-\frac{1}{2}}R^{-1}D^{-\frac{1}{2}}(\hat{G}^T\hat{\sigma}-G^T\sigma)\\
 	&\le\rho_{\min}(R)^{-1}(\hat{G}^T\hat{\sigma}-G^T\sigma)^TD^{-1}(\hat{G}^T\hat{\sigma}-G^T\sigma)\\
 	&\le\rho_{\min}(R)^{-1}\sum_{i=1}^{K}\frac{(\hat{\sigma}^T\Sigma^{i-1}\hat{\sigma}-\sigma^T\Sigma^{i-1}\sigma)^2}{\sigma^T\Sigma^{2i-1}\sigma}\\
 & \le\rho_{\min}(R)^{-1}C_\delta^{2}\sum_{i=1}^{K}\frac{\bigg(\frac{\tau^2}{n}\Tr(\Sigma^{i})
 	+\sqrt{\frac{\tau^2}{n}}\rho(\Sigma)^{\frac{i}{2}}\sqrt{\sigma^T\Sigma^{i-1}\sigma}\bigg)^2}{\sigma^T\Sigma^{2i-1}\sigma}.
 \end{align*}
Then, with \ref{ass:signal},
\begin{align*}
 	\III	
 	&\le \rho_{\min}(R)^{-1}C_\delta^{2}\,2\sum_{i=1}^{K}\Bigg(\left(\frac{\tau^2}{n}\right)^2\frac{\Tr(\Sigma^i)^2}{\sigma^T\Sigma^{2i-1}\sigma}+\frac{\tau^2}{n}\frac{\rho(\Sigma)^i\sigma^T\Sigma^{i-1}\sigma}{\sigma^T\Sigma^{2i-1}\sigma}\Bigg)\\
	&\le \rho_{\min}(R)^{-1}C_\delta^{2}\,2\sum_{i=1}^{K}\Bigg(\frac{\tau^2}{n}\Tr(\Sigma^i)\frac{\frac{\tau^2}{n}\Tr(\Sigma^{i})}{Kt_{\delta,R}\frac{\tau^2}{n}\rho(\Sigma)^{i}\Tr(\Sigma^{i})}+\frac{\tau^2}{n}\frac{\rho(\Sigma)^i\sigma^T\Sigma^{i-1}\sigma}{\sigma^T\Sigma^{2i-1}\sigma}\Bigg).
\end{align*}
Using the inequality 
\begin{equation}\label{eq:ratioS} \frac{1}{\rho(\Sigma)^i}\leq \frac{\sigma^T\Sigma^{i-1}\sigma}{\sigma^T\Sigma^{2i-1}\sigma} \quad \forall i \in \lbrace 1,\dots, p \rbrace,\end{equation}
we obtain
$$
 	\III	\le \rho_{\min}(R)^{-1}C_\delta^{2}\,2\frac{\tau^2}{n}\sum_{i=1}^{K}\Bigl(\Tr(\Sigma^i)\frac{1 }{K\,t_{\delta,R}}+\rho(\Sigma)^{i}\Bigr)\frac{\sigma^T\Sigma^{i-1}\sigma}{\sigma^T\Sigma^{2i-1}\sigma}.$$
\cref{lem:termIII} follows with the definition of $\bar{\Lambda}$ in \eqref{eq:defLbar}.	
\end{proof}

\section[Proof of Theorem 3.1]{Proof of \cref{Th:1}}
\label{s:proof}

First, we can remark that
\begin{align}
\frac{1}{n} \| X(\hat\beta_K - \beta)\|^2
& \leq \frac{2}{n} \| X(\bar\beta - \beta)\|^2 + \frac{2}{n} \| X(\hat \beta_K - \bar\beta)\|^2, \nonumber \\
& =  \frac{2}{n} \inf_{v\in [G]} \| X(\beta-v)\|^2 +\frac{2}{n} \|X(\hat\beta_K-\bar\beta)\|^2,
\label{eq:beginproof}
\end{align}
where $\bar\beta$ has been introduced in \eqref{Eq:barbeta}. Then, we use the following decomposition:
\begin{align*}
	\hat\beta_K - \bar{\beta} 
	& = \hat{G}\hat{\Theta}^{-1}\hat{G}^T\hat{\sigma}-G\Theta^{-1}G^T\sigma \\
	&=(\hat{G}-G)\hat{\Theta}^{-1}\hat{G}^T\hat{\sigma}+G(\hat{\Theta}^{-1}-\Theta^{-1})\hat{G}^T\hat{\sigma}+G\Theta^{-1}(\hat{G}^T\hat{\sigma}-G^T\sigma).
\end{align*}
It yields
\begin{align}
\lefteqn{\frac{1}{n} \|X(\hat\beta_K-\bar\beta)\|^2} \nonumber \\
    \nonumber& = (\hat{\beta}_{K}-\bar\beta)^T\Sigma(\hat{\beta}_{K}-\bar\beta)\\
    \nonumber&\le 4\hat{\sigma}^T\hat{G}\hat{\Theta}^{-1}(\hat{G}-G)^T\Sigma(\hat{G}-G)\hat{\Theta}^{-1}\hat{G}^T\hat{\sigma} + 4 \hat{\sigma}^T\hat{G}(\hat{\Theta}^{-1}-\Theta^{-1})\Theta(\hat{\Theta}^{-1}-\Theta^{-1})\hat{G}^T\hat{\sigma}\\
    \nonumber&\quad +2(\hat{\sigma}^T\hat{G}-\sigma^TG)\Theta^{-1}(\hat{G}^T\hat{\sigma}-G^T\sigma)\\
    & = 4\underbrace{\hat{\sigma}^T\hat{G}\hat{\Theta}^{-1}(\hat{G}-G)^T\Sigma(\hat{G}-G)\hat{\Theta}^{-1}\hat{G}^T\hat{\sigma}}_{:=T_1}+4\underbrace{\hat{\sigma}^T\hat{G}(\hat{\Theta}^{-1}-\Theta^{-1})\Theta(\hat{\Theta}^{-1}-\Theta^{-1})\hat{G}^T\hat{\sigma}}_{:=T_2}+2\,\III.
    \label{ineqPrincipale}
\end{align}
In the following, we shall bound these terms using $\I$, $\II$ and $\III$ defined respectively in \cref{lem:termI}, \cref{lem:termII}, and \cref{lem:termIII}.

\subsection[Bound on the first term]{Bound on $T_1$}
We decompose the term $T_1$ as follows: 
\begin{align*}
   T_1&=\hat{\sigma}^T\hat{G}\hat{\Theta}^{-1}(\hat{G}-G)^T\Sigma(\hat{G}-G)\hat{\Theta}^{-1}\hat{G}^T\hat{\sigma}\\
    &\le 2\hat{\sigma}^T\hat{G}\Theta^{-1}(\hat{G}-G)^T\Sigma(\hat{G}-G)\Theta^{-1}\hat{G}^T\hat{\sigma}\\
    &\quad +2\hat{\sigma}^T\hat{G}(\hat{\Theta}^{-1}-\Theta^{-1})(\hat{G}-G)\Sigma(\hat{G}-G)(\hat{\Theta}^{-1}-\Theta^{-1})\hat{G}^T\hat{\sigma}\\
    &=:T_{11}+T_{12}.
\end{align*}

\paragraph{Control of the term $T_{12}$.}

First, remark that
\begin{align*}
   T_{12}&=2\,\hat{\sigma}^T\hat{G}(\hat{\Theta}^{-1}-\Theta^{-1})(\hat{G}-G)^T\Sigma(\hat{G}-G)(\hat{\Theta}^{-1}-\Theta^{-1})\hat{G}^T\hat{\sigma}\\
    &= 2\,\hat{\sigma}^T\hat{G}\Theta^{-1}(\hat{\Theta}-\Theta)\hat{\Theta}^{-1}(\hat{G}-G)^{T}\Sigma(\hat{G}-G)\hat\Theta^{-1}(\hat{\Theta}-\Theta)\Theta^{-1}\hat{G}^T\hat{\sigma}\\
    & = 2\,\hat{\sigma}^T\hat{G}\Theta^{-1}(\hat{\Theta}-\Theta)D^{-\frac{1}{2}}\hat{R}^{-1}\bigg(D^{-\frac{1}{2}}(\hat{G}-G)^T\Sigma(\hat{G}-G)D^{-\frac{1}{2}}\bigg)\hat{R}^{-1}D^{-\frac{1}{2}}(\hat{\Theta}-\Theta)\Theta^{-1}\hat{G}^T\hat{\sigma}\\
    &\le\frac{8}{\rho_{\min}(R)^2}\rho\bigg(D^{-\frac{1}{2}}(\hat{G}-G)^T\Sigma(\hat{G}-G))D^{-\frac{1}{2}}\bigg)\hat{\sigma}^T\hat{G}\Theta^{-1}(\hat{\Theta}-\Theta)D^{-1}(\hat{\Theta}-\Theta)\Theta^{-1}\hat{G}^T\hat{\sigma},
\end{align*}
where we have used \cref{lem:R}. Using inequality \eqref{eq:inequA} and \cref{Cor:Majoration3Termes}, we deduce that, on the event $\mathcal{A}_\delta$,
\begin{align*}
T_{12} & \le\frac{8}{\rho_{\min}(R)^2}K\underset{i,j}{\max}\left\lbrace\frac{(\hat\sigma-\sigma)^T\Sigma^{i+j-1} (\hat\sigma - \sigma)}{\sqrt{\sigma^T\Sigma^{2i-1}\sigma}\sqrt{\sigma^T\Sigma^{2j-1}\sigma}}\right\rbrace\hat{\sigma}^T\hat{G}\Theta^{-1}(\hat{\Theta}-\Theta)D^{-1}(\hat{\Theta}-\Theta)\Theta^{-1}\hat{G}^T\hat{\sigma}\\
& \le\frac{8C_{\delta}}{\rho_{\min}(R)^2}K\underset{i,j}{\max}\left\lbrace\frac{\frac{\tau^2}{n}\Tr(\Sigma^{i+j})}{\sqrt{\sigma^T\Sigma^{2i-1}\sigma}\sqrt{\sigma^T\Sigma^{2j-1}\sigma}}\right\rbrace\hat{\sigma}^T\hat{G}\Theta^{-1}(\hat{\Theta}-\Theta)D^{-1}(\hat{\Theta}-\Theta)\Theta^{-1}\hat{G}^T\hat{\sigma}.
\end{align*}
Since $\Tr(\Sigma^{i+j})\le \sqrt{\Tr(\Sigma^{2i})}\sqrt{\Tr(\Sigma^{2j})}$ for any $i,j\in \lbrace 1,\dots, K \rbrace$, \ref{ass:signal} gives
\begin{align*}
T_{12}     
    & \le \frac{8C_{\delta}}{\rho_{\min}(R)^2}K\cdot\frac{1}{K\,t_{\delta,R}} \hat{\sigma}^T\hat{G}\Theta^{-1}(\hat{\Theta}-\Theta)D^{-1}(\hat{\Theta}-\Theta)\Theta^{-1}\hat{G}^T\hat{\sigma}\\
       &\le \frac{16C_{\delta}}{\rho_{\min}(R)^2 t_{\delta,R}}(\hat{\sigma}^T\hat{G}-\sigma^TG)\Theta^{-1}(\hat{\Theta}-\Theta)D^{-1}(\hat{\Theta}-\Theta)\Theta^{-1}(\hat{G}^T\hat{\sigma}-G^T\sigma)\\
    &\quad+\frac{16C_{\delta}}{\rho_{\min}(R)^2 t_{\delta,R}}\sigma^TG\Theta^{-1}(\hat{\Theta}-\Theta)D^{-1}(\hat{\Theta}-\Theta)\Theta^{-1}G^T\sigma.
\end{align*}
Recall that $R=D^{-1/2}\Theta D^{-1/2}$ and $\hat R=D^{-1/2}\hat \Theta D^{-1/2}$. \cref{lem:R} implies that, on the event $\mathcal{A}_\delta$,    
 \begin{align*}
   T_{12}  
    &\le \frac{16C_{\delta}}{\rho_{\min}(R)^2 t_{\delta,R}}(\hat{\sigma}^T\hat{G}-\sigma^TG)D^{-\frac{1}{2}}R^{-1}(\hat{R}-R)^{2}R^{-1}D^{-\frac{1}{2}}(\hat{G}^T\hat{\sigma}-G^T\sigma)\\  
    &\quad+\frac{16C_{\delta}}{\rho_{\min}(R)^2 t_{\delta,R}}\sigma^TG\Theta^{-1}(\hat{\Theta}-\Theta)D^{-1}(\hat{\Theta}-\Theta)\Theta^{-1}G^T\sigma\\
    &\le\frac{16C_{\delta}}{\rho_{\min}(R)^2 t_{\delta,R}}\rho(R)^{2}\rho_{\min}(R)^{-1}(\hat{\sigma}^T\hat{G}-\sigma^TG)\Theta^{-1}(\hat{G}^T\hat{\sigma}-G^T\sigma)\\
    &\quad+\frac{16C_{\delta}}{\rho_{\min}(R)^2 t_{\delta,R}}\sigma^TG\Theta^{-1}(\hat{\Theta}-\Theta)D^{-1}(\hat{\Theta}-\Theta)\Theta^{-1}G^T\sigma.
\end{align*}
Finally, using the fact that $t_{\delta,R}\geq 16\frac{C_\delta}{\rho_{\min}(R)}$,
\begin{equation} \label{eq:T12}
  T_{12}\le\frac{\rho(R)^{2}}{\rho_{\min}(R)^2}\III+\frac{1}{\rho_{\min}(R)}\II,  
\end{equation}
where the terms $\II$ and $\III$ have been introduced respectively in \cref{lem:termII} and \cref{lem:termIII}.

\paragraph{Control of the term $T_{11}$.}

First, we have
\begin{align*}
    T_{11}&={2\,\hat{\sigma}^T\hat{G}\Theta^{-1}(\hat{G}-G)^T\Sigma(\hat{G}-G)\Theta^{-1}\hat{G}^T\hat{\sigma}}\\
    &\le 4\,\sigma^TG\Theta^{-1}(\hat{G}-G)^T\Sigma(\hat{G}-G)\Theta^{-1}G^T\sigma\\
    &\quad+4(\hat{\sigma}^T\hat{G}-\sigma^TG)\Theta^{-1}(\hat{G}-G)^T\Sigma(\hat{G}-G)\Theta^{-1}(\hat{G}^T\hat{\sigma}-G^T\sigma)\\
    &\le 4\,\I+4\,(\hat{\sigma}^T\hat{G}-\sigma^TG)D^{-\frac{1}{2}}R^{-1}\bigg(D^{-\frac{1}{2}}(\hat{G}-G)^T\Sigma(\hat{G}-G)D^{-\frac{1}{2}}\bigg)R^{-1}D^{-\frac{1}{2}}(\hat{G}^T\hat{\sigma}-G^T\sigma)\\
    & \leq 4\,\I+4\,\rho\bigg(D^{-\frac{1}{2}}(\hat{G}-G)^T\Sigma(\hat{G}-G)D^{-\frac{1}{2}}\bigg)\times (\hat{\sigma}^T\hat{G}-\sigma^TG)D^{-\frac{1}{2}}R^{-1}R^{-1}D^{-\frac{1}{2}}(\hat{G}^T\hat{\sigma}-G^T\sigma).
\end{align*}

Using successively \cref{Cor:Majoration3Termes}, \cref{eq:inequA} and \ref{ass:signal},
\begin{align*}
    T_{11}
    &\le 4\,\I+4 \,\frac{C_{\delta}}{t_{\delta,R}}(\hat{\sigma}^T\hat{G}-\sigma^TG)D^{-\frac{1}{2}}R^{-1}R^{-1}D^{-\frac{1}{2}}(\hat{G}^T\hat{\sigma}-G^T\sigma)\\
    &\le 4\I
    + 4\frac{C_{\delta}}{t_{\delta,R}}\rho_{\min}(R)^{-1}(\hat{\sigma}^T\hat{G}-\sigma^TG)\Theta^{-1}(\hat{G}^T\hat{\sigma}-G^T\sigma).
\end{align*}
That is,
\begin{equation*}
    T_{11}\le 4\,\I+4\frac{C_{\delta}}{t_{\delta,R}\rho_{\min}(R)}\III.
\end{equation*}
Since $t_{\delta,R}\geq 16\frac{C_\delta}{\rho_{\min}(R)}$, it follows that 
\begin{equation} \label{eq:T11}
   T_{11}\le 4\,\I+\frac{1}{4}\III.
\end{equation}

\paragraph{Final bound on $T_1$.}

We deduce from \eqref{eq:T11} and \eqref{eq:T12} that
\begin{equation} \label{ineq:T1}
T_1\le 4\,\I+\frac{1}{\rho_{\min}(R)}\II+\big(\frac{\rho(R)^2}{\rho_{\min}(R)^2}+\frac{1}{4}\big)\III.
\end{equation}

\subsection[Bound on the second term]{Bound on $T_2$}
Using \ref{ass:inverse} and \cref{lem:R}, we obtain
\begin{align*}
    T_2&={\hat{\sigma}^T\hat{G}(\hat{\Theta}^{-1}-\Theta^{-1})\Theta(\hat{\Theta}^{-1}-\Theta^{-1})\hat{G}^T\hat{\sigma}}\\
    &=\hat{\sigma}^T\hat{G}\Theta^{-1}(\hat{\Theta}-\Theta)\hat{\Theta}^{-1}\Theta\hat{\Theta}^{-1}(\hat{\Theta}-\Theta)\Theta^{-1}\hat{G}^T\hat{\sigma}\\
    &\le\rho(R)\frac{4}{\rho_{\min}(R)^2}\hat{\sigma}^T\hat{G}\Theta^{-1}(\hat{\Theta}-\Theta)D^{-1}(\hat{\Theta}-\Theta)\Theta^{-1}\hat{G}^T\hat{\sigma}\\
    &\le2\rho(R)\frac{4}{\rho_{\min}(R)^2}\sigma^TG\Theta^{-1}(\hat{\Theta}-\Theta)D^{-1}(\hat{\Theta}-\Theta)\Theta^{-1}G^T\sigma\\
    &\quad+2\rho(R)\frac{4}{\rho_{\min}(R)^2}(\hat{\sigma}^T\hat{G}-\sigma^TG)\Theta^{-1}(\hat{\Theta}-\Theta)D^{-1}(\hat{\Theta}-\Theta)\Theta^{-1}(\hat{G}^T\hat{\sigma}-G^T\sigma)\\
    &\le2\rho(R)\frac{4}{\rho_{\min}(R)^2}\II 
    +2\rho(R)\frac{4}{\rho_{\min}(R)^2}(\hat{\sigma}^T\hat{G}-\sigma^TG)D^{-\frac{1}{2}}R^{-1}(\hat{R}-R)^2R^{-1}D^{-\frac{1}{2}}(\hat{G}^T\hat{\sigma}-G^T\sigma).
\end{align*}
Using again \cref{lem:R},
\begin{align}
    T_2    &\le2\rho(R)\frac{4}{\rho_{\min}(R)^2}\II+2\rho(R)\frac{4}{\rho_{\min}(R)^2}\rho(R)^2\rho_{\min}(R)^{-1}(\hat{\sigma}^T\hat{G}-\sigma^TG)\Theta^{-1}(\hat{G}^T\hat{\sigma}-G^T\sigma) \nonumber\\
    &\le 8\frac{\rho(R)}{\rho_{\min}(R)^2}\II+8\frac{\rho(R)^3}{\rho_{\min}(R)^{3}}\III.
    \label{ineq:T2}
\end{align}

\subsection{End of the proof}
Using \eqref{ineqPrincipale}, \eqref{ineq:T1} and \eqref{ineq:T2}, we obtain that
\begin{align*}
\frac{1}{n} \| X\hat\beta_K - X\bar{\beta}\|^2
& = (\hat\beta_K - \bar{\beta})^T \Sigma (\hat\beta_K - \bar{\beta})\\
& \leq 16\I+\frac{4}{\rho_{\min}(R)}\big(1+8\Cond(R)\big)\II + \big(3+4\Cond(R)^2+32\Cond(R)^3 \big) \III.
\end{align*}

Using \cref{lem:termI}, \cref{lem:termII}, \cref{lem:termIII}, we get, on the event $\mathcal{A}_\delta$, 
\begin{align}
\lefteqn{\frac{1}{n} \| X\hat\beta_K - X\bar\beta\|^2} \nonumber \\
 &   \leq C_\delta\left(16+ \frac{1}{2} +  4\,\Cond(R)\right)  \frac{\tau^2}{n} \left(\sum_{i=1}^{K}\sqrt{\Tr(\Sigma^{2i})}\lvert \Lambda_i\rvert\right)^2 \nonumber \\
 & \quad + 8C_\delta^2\,\Cond(R) \left( 1 + 8\,\Cond(R) \right) \frac{\tau^2}{n}\|\tilde{\Lambda}\|^2 \sum_{j=1}^{K}\frac{\rho(\Sigma)^{2j}}{\sigma^T\Sigma^{2j-1}\sigma} \nonumber \\
 &  \quad + 2\frac{C_\delta^2}{\rho_{\min}(R)} \left( 3 + 4\,\Cond(R)^2+32\,\Cond(R)^3\right) \frac{\tau^2}{n} \left(\frac{1}{K t_{\delta,R}}\sum_{i=1}^{K}\bar\Lambda_i\Tr(\Sigma^{i})+\sum_{i=1}^{K}\bar{\Lambda}_{i}\rho(\Sigma)^{i}\right).
\label{eq:bornebonus}
\end{align}

Now, remark that 
\begin{equation}\label{Eq:SbarL}
\frac{1}{K t_{\delta,R}}\sum_{i=1}^{K}\bar\Lambda_i\Tr(\Sigma^{i})+\sum_{i=1}^{K}\bar{\Lambda}_{i}\rho(\Sigma)^{i} \leq \left( \frac{1}{t_{\delta,R}}+1 \right) \| \bar{\Lambda}\| \left( \sum_{i=1}^K( \Tr (\Sigma^i))^2 \right)^{1/2}.
\end{equation}
Moreover
\begin{align}
\| \bar \Lambda\|^2    
 & = \| D^{-1} G^T \sigma \|^2,\nonumber \\
 & = \sigma^T G D^{-2} G^T \sigma, \nonumber \\
 & = \Lambda^T \Theta D^{-2} \Theta \Lambda, \nonumber\\
 & = \Lambda^T D^{1/2} R D^{-1} R D^{\frac{1}{2}}\Lambda, \nonumber\\
 & \leq \Cond(D) \rho(R)^2\lVert {\Lambda}\rVert^{2}.\label{Eq:BarL}
\end{align}
In the same time, recalling that $\tilde \Lambda = D^{1/2} \Lambda$, we get  
\begin{equation}\label{Eq:TildeL}
\|\tilde{\Lambda}\|^2\sum_{j=1}^{K}\frac{\rho(\Sigma)^{2j}}{\sigma^T\Sigma^{2j-1}\sigma}\le\frac{\underset{i}{\max}(\sigma^T\Sigma^{2i-1}\sigma)}{\underset{j}{\min}(\sigma^T\Sigma^{2j-1}\sigma)}~\|\Lambda\|^2\sum_{j=1}^{K}\rho(\Sigma)^{2j}.\end{equation}

Plugging inequalities \eqref{Eq:SbarL}, \eqref{Eq:BarL} and \eqref{Eq:TildeL}  in bound \eqref{eq:bornebonus} leads to 
\begin{align*}
\lefteqn{\frac{1}{n} \| X\hat\beta_K - X\bar \beta\|^2} \\
 & \leq  C_\delta\left(17 +  4\,\Cond(R)\right)  \frac{\tau^2}{n} \left(\sum_{i=1}^{K}\sqrt{\Tr(\Sigma^{2i})}\lvert \Lambda_i\rvert\right)^2 \nonumber\\
 &  \; + 8C_\delta^2\,\Cond(R) \left( 1 + 8\,\Cond(R) \right) \Cond(D)\|\Lambda\|^2 \frac{\tau^2}{n}\sum_{j=1}^{K}\rho(\Sigma)^{2j}\\
 & \; + 2{C_\delta}\Bigl(C_\delta{\Cond(R)}+\frac{\rho(R)}{16}\Bigr) \left( 3 + 4\Cond(R)^2+32\Cond(R)^3\right) \frac{\tau^2}{n} \sqrt{\Cond(D)} \| \Lambda\| \Bigl( \sum_{i=1}^K( \Tr (\Sigma^i))^2 \Bigr)^{\frac{1}{2}}\\
& \le C_\delta(21+72C_\delta)\Cond(R)^2\Cond(D) \frac{\tau^2}{n} \|\Lambda\|^2 \sum_{i=1}^{K}\Tr(\Sigma^{2i})\\
& \quad +78\,C_\delta(C_\delta+\rho(R)/16)\Cond(R)^4\sqrt{\Cond(D)}\| \Lambda\| \frac{\tau^2}{n} \left( \sum_{i=1}^K( \Tr (\Sigma^i))^2 \right)^{\frac{1}{2}}\\
 & =: D^{(1)}_{\delta,R} \frac{\tau^2}{n} \Cond(D)\|\Lambda\|^2 \sum_{i=1}^{K}\Tr(\Sigma^{2i})+ D^{(2)}_{\delta,R} \frac{\tau^2}{n} \sqrt{\Cond(D)}\| \Lambda\| \left( \sum_{i=1}^K( \Tr (\Sigma^i))^2 \right)^{\frac{1}{2}},
\end{align*}
with 
$$ D^{(1)}_{\delta,R} = C_\delta(21+72C_\delta)\Cond(R)^2 \text{ and }
 D^{(2)}_{\delta,R} = 78\,C_\delta(C_\delta+\rho(R)/16)\Cond(R)^4.$$
We highlight the term $\Cond(R)^{4}$ in the constant $D_{\delta,R}^{(2)}$.
The proof can be concluded according to the last bound and \eqref{eq:beginproof}, with 
\begin{equation}
D_{\delta,R} =  \max( D^{(1)}_{\delta,R},D^{(2)}_{\delta,R}   ).
\label{eq:constantD}
\end{equation}

\subsection{A more precise result}
\label{s:Bonus}

 The bound displayed in \cref{Th:1} has been simplified for the ease of exposition. However, a more precise bound can be extracted from the proof. The corresponding result is displayed below. 
 
\begin{theorem}\label[thm]{Th:3} 
Let $\delta \in (0,1)$. Suppose that \ref{ass:inverse} and \ref{ass:signal} hold. Then, with a probability larger than $1-\delta$, 
\begin{align*}
\lefteqn{\frac{1}{n}\|X\hat{\beta}_{K}-X\beta\|^2}\\
& \le\frac{2}{n}\underset{v\in[G]}{\inf}\|X(\beta-v)\|^2
    + \mathcal{D}^{(1)}_{\delta,R} \frac{\tau^2}{n} \left(\sum_{i=1}^{K}\sqrt{\Tr(\Sigma^{2i})}\lvert \Lambda_i\rvert\right)^2 \\
 &  \quad +  \mathcal{D}^{(2)}_{\delta,R}\|\tilde{\Lambda}\|^2 \frac{\tau^2}{n}\sum_{j=1}^{K}\frac{\rho(\Sigma)^{2j}}{\sigma^T\Sigma^{2j-1}\sigma} \\
 &  \quad + \mathcal{D}^{(3)}_{\delta,R} \frac{\tau^2}{n} \left(\frac{1}{K t_{\delta,R}}\sum_{i=1}^{K}\bar\Lambda_i\Tr(\Sigma^{i})+\sum_{i=1}^{K}\bar{\Lambda}_{i}\rho(\Sigma)^{i}\right).
 \end{align*}
 for some constants $\mathcal{D}^{(j)}_{\delta,R}$, $j\in \lbrace 1,2,3 \rbrace$ depending only from $\delta$ and $R$.
 \end{theorem}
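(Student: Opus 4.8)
The plan is to recognize that \cref{Th:3} is nothing but the sharper intermediate bound \eqref{eq:bornebonus} that already appears inside the proof of \cref{Th:1}, stated before the three simplifying estimates \eqref{Eq:SbarL}, \eqref{Eq:BarL} and \eqref{Eq:TildeL} are applied. I would therefore run the argument of Section \ref{s:proof} unchanged and simply stop one step earlier, reading off $\mathcal{D}^{(1)}_{\delta,R}$, $\mathcal{D}^{(2)}_{\delta,R}$ and $\mathcal{D}^{(3)}_{\delta,R}$ as the three prefactors occurring in \eqref{eq:bornebonus}. All deviation control is confined to the event $\mathcal{A}_\delta$ of \cref{Prop:Majoration3Termes}, which has probability at least $1-\delta$, so the probabilistic statement is automatic.

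First I would start from the bias/estimation split \eqref{eq:beginproof}, which isolates the bias $\frac{2}{n}\inf_{v\in[G]}\|X(\beta-v)\|^2$ and reduces the task to controlling $\frac{1}{n}\|X(\hat\beta_K-\bar\beta)\|^2$. Expanding $\hat\beta_K-\bar\beta$ as the sum $(\hat G-G)\hat\Theta^{-1}\hat G^T\hat\sigma+G(\hat\Theta^{-1}-\Theta^{-1})\hat G^T\hat\sigma+G\Theta^{-1}(\hat G^T\hat\sigma-G^T\sigma)$ and forming the quadratic form against $\Sigma$ gives \eqref{ineqPrincipale}, reducing everything to the three pieces $T_1$, $T_2$ and $\III$. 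The term $\III$ is already estimated in \cref{lem:termIII}.

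The heart of the matter is $T_1$ and $T_2$, and the key device throughout is the resolvent identity $\hat\Theta^{-1}-\Theta^{-1}=-\Theta^{-1}(\hat\Theta-\Theta)\hat\Theta^{-1}$, combined with the normalization $\Theta=D^{1/2}RD^{1/2}$, $\hat\Theta=D^{1/2}\hat R D^{1/2}$. The crucial input is \cref{lem:R}, giving $\rho_{\min}(\hat R)\ge\rho_{\min}(R)/2$ on $\mathcal{A}_\delta$, hence the uniform bound $\rho(\hat R^{-1})\le 2/\rho_{\min}(R)$; this is what permits replacing $\hat\Theta^{-1}$ by $\Theta^{-1}$ at the price of factors of $\Cond(R)$. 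I would split $T_1=T_{11}+T_{12}$, bound $T_{12}$ by factoring out $\rho(D^{-1/2}(\hat G-G)^T\Sigma(\hat G-G)D^{-1/2})$ through \eqref{eq:inequA} and \cref{Cor:Majoration3Termes}, and invoke \ref{ass:signal} to absorb the resulting $K\cdot\tfrac{1}{Kt_{\delta,R}}$ factor, yielding \eqref{eq:T12}; the same mechanism yields \eqref{eq:T11} for $T_{11}$, hence \eqref{ineq:T1}, and an identical treatment of $T_2$ yields \eqref{ineq:T2}. Combining \eqref{ineqPrincipale}, \eqref{ineq:T1} and \eqref{ineq:T2} produces a bound of the shape $16\,\I+\tfrac{4}{\rho_{\min}(R)}(1+8\Cond(R))\II+(3+4\Cond(R)^2+32\Cond(R)^3)\III$, into which I substitute the explicit forms from \cref{lem:termI}, \cref{lem:termII} and \cref{lem:termIII}. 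This reproduces exactly the three summands of \eqref{eq:bornebonus}, which is the claimed inequality.

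I expect the main obstacle to be purely organizational rather than conceptual: in $T_{12}$ and in $T_2$ one must repeatedly separate each quadratic form into a ``centered'' part carrying $\hat G^T\hat\sigma-G^T\sigma$ (eventually matched to $\III$ via $\Theta^{-1}$) and a ``population'' part carrying $\sigma^T G$ (matched to $\II$), while carefully tracking the power of $\Cond(R)$ accumulated each time $\hat R^{-1}$ is traded for $R^{-1}$, and using \ref{ass:signal} at precisely the places where the ratio $C_\delta/t_{\delta,R}$ must be driven below $1$, $1/4$ or $1/16$. Getting these numerical thresholds to line up so that the absorbed terms fold cleanly back into $\II$ and $\III$ is the delicate bookkeeping step; no new inequality beyond \cref{lem:R}, \cref{Cor:Majoration3Termes} and \eqref{eq:inequA} is needed.
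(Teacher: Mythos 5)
Your proposal is correct and coincides with the paper's own proof: \cref{Th:3} is obtained precisely by combining the initial split \eqref{eq:beginproof} with the intermediate bound \eqref{eq:bornebonus} from the proof of \cref{Th:1}, i.e.\ stopping before the simplifying estimates \eqref{Eq:SbarL}, \eqref{Eq:BarL} and \eqref{Eq:TildeL} are applied, and reading off the constants $\mathcal{D}^{(j)}_{\delta,R}$ as the three prefactors in \eqref{eq:bornebonus}. Your account of how \eqref{eq:bornebonus} itself is derived (the decomposition \eqref{ineqPrincipale} into $T_1$, $T_2$, $\III$, the use of \cref{lem:R}, \cref{Cor:Majoration3Termes}, \eqref{eq:inequA} and \ref{ass:signal} to reach \eqref{ineq:T1} and \eqref{ineq:T2}, then substituting \cref{lem:termI}, \cref{lem:termII}, \cref{lem:termIII}) is exactly the paper's argument.
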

\begin{proof}
We use \eqref{eq:beginproof} together with \eqref{eq:bornebonus} to immediately obtain the result with 
$$ \mathcal{D}^{(1)}_{\delta,R} =  32 C_\delta + 8\rho_{\min}(R)C_{\delta} \left( \frac{1}{8\,\rho_{\min}(R)} +  \frac{\rho(R)}{\rho_{\min}(R)^2} \right) ,$$
$$ \mathcal{D}^{(2)}_{\delta,R} = 128 C_\delta^2\rho(R) \left( \frac{1}{8\,\rho_{\min}(R)} + \frac{\rho(R)}{\rho_{\min}(R)^2} \right),$$
and
$$ \mathcal{D}^{(3)}_{\delta,R}= 4\frac{C_\delta^2}{\rho_{\min}(R)} \left( 2 + 32\frac{\rho(R)^3}{\rho_{\min}(R)^{3}}+\big(\frac{4\rho(R)^2}{\rho_{\min}(R)^2}+1\big)\right).$$
\end{proof}


\section{Ridge regularization}
\label{s:proof_ridge}

\subsection{Notations and assumptions}
We keep the same notations as before. We recall that $\Delta_{\alpha}=\mathrm{diag(\alpha_i)}$ where $\alpha_{i}>0$ for all $i\in\{1,...,K\}$. Let $\Theta_{\alpha}=\Theta+\Delta_{\alpha}$ and respectively  $D_{\alpha}=\mathrm{diag}(\Theta_{\alpha})$, $R_{\alpha}=D_{\alpha}^{-\frac{1}{2}}\Theta_{\alpha}D_{\alpha}^{-\frac{1}{2}}$. We also introduce $\hat{\Theta}_{\alpha}=\hat{\Theta}+\Delta_{\alpha}$ and $\hat{R}_{\alpha}=D_{\alpha}^{-\frac{1}{2}}\hat{\Theta}_{\alpha}D_{\alpha}^{-\frac{1}{2}}$.
We define $\beta_{\alpha}=G\Theta_{\alpha}^{-1}G^T\sigma$ and $\hat{\beta}_{K,\alpha}=\hat{G}\hat{\Theta}^{-1}_{\alpha}\hat{G}^T\hat{\sigma}$. Let us denote $\Lambda_{\alpha}=\Theta_{\alpha}^{-1}G^T\sigma$, the theoretical coordinates of $\beta_{\alpha}$ relative to the Krylov space.

\subsection[Some properties on the Ridge correlation matrices]{Some properties of $R_{\alpha}$ and $\hat{R}_{\alpha}$.}

\subsubsection[]{Bounds on the spectrum of $R_{\alpha}$}
\begin{lemma}\label{Lem:RhoMinAlpha1}
Assume that \ref{ass:inverse} holds. Then, 
$$ \rho_{\min}(R_\alpha) > \rho_{\min}(R) \quad \forall \alpha \in (\mathbb{R}^+)^K.$$
\end{lemma}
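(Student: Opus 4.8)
The plan is to exploit the fact that both $R$ and $R_\alpha$ are correlation matrices (unit diagonal) and to express $R_\alpha$ as a congruence shrinkage of $R$ plus a positive diagonal correction. First I would observe that, since $\Delta_\alpha$ is diagonal, the off-diagonal entries of $\Theta_\alpha=\Theta+\Delta_\alpha$ coincide with those of $\Theta$, while $D_\alpha=\mathrm{diag}(\Theta_\alpha)=D+\Delta_\alpha$. Introducing the diagonal matrix $S=\mathrm{diag}(s_1,\dots,s_K)$ with $s_i=\sqrt{D_{ii}/(D_{ii}+\alpha_i)}\in(0,1)$, an entrywise check gives the identity $R_\alpha=SRS+(I_K-S^2)$: the two terms reproduce the unit diagonal since $s_i^2+(1-s_i^2)=1$, and they reproduce the off-diagonals $s_is_jR_{ij}$ since $I_K-S^2$ is diagonal.

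Next I would estimate the Rayleigh quotient of $R_\alpha$. For any unit vector $x\in\RR^K$, setting $y=Sx$, we get $x^TR_\alpha x=y^TRy+1-\|y\|^2\ge \rho_{\min}(R)\|y\|^2+1-\|y\|^2=1-(1-\rho_{\min}(R))\|y\|^2$. Since $\|y\|^2=\|Sx\|^2\le s_{\max}^2$ where $s_{\max}:=\max_i s_i<1$, and since $1-\rho_{\min}(R)\ge 0$, minimizing over $x$ yields $\rho_{\min}(R_\alpha)\ge 1-(1-\rho_{\min}(R))\,s_{\max}^2$. Subtracting $\rho_{\min}(R)$ exhibits the gap $(1-\rho_{\min}(R))(1-s_{\max}^2)$, so the whole argument reduces to proving that this product is strictly positive.

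It remains to show the gap is strictly positive, and I expect this to be the main point. The factor $1-s_{\max}^2>0$ holds because every $\alpha_i>0$ forces $s_i<1$. The delicate factor is $1-\rho_{\min}(R)>0$, i.e.\ $\rho_{\min}(R)<1$. Since $R$ has unit diagonal, $\Tr(R)=K$, so its smallest eigenvalue is at most $1$, with equality forcing all eigenvalues to equal $1$ and hence $R=I_K$. But $R\ne I_K$ as soon as $K\ge 2$: the entry $\Theta_{12}=\sigma^T\Sigma^2\sigma=\|\Sigma\sigma\|^2$ is strictly positive, since $\Sigma\sigma=0$ would give $\Theta_{11}=\sigma^T\Sigma\sigma=0$ and contradict the positive-definiteness of $\Theta$ guaranteed by \ref{ass:inverse}. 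Consequently $R_{12}>0$, so $\rho_{\min}(R)<1$, and combining with the previous step gives $\rho_{\min}(R_\alpha)\ge 1-(1-\rho_{\min}(R))\,s_{\max}^2>\rho_{\min}(R)$, which is the claim.
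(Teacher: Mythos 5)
Your proposal is correct for $K\ge 2$, and its engine is the same decomposition the paper uses: your identity $R_\alpha=SRS+(I_K-S^2)$, with $S=D^{1/2}(D+\Delta_\alpha)^{-1/2}$, is exactly the paper's splitting $R_\alpha=(D+\Delta_\alpha)^{-1/2}D^{1/2}RD^{1/2}(D+\Delta_\alpha)^{-1/2}+(D+\Delta_\alpha)^{-1/2}\Delta_\alpha(D+\Delta_\alpha)^{-1/2}$, and the Rayleigh-quotient step coincides. The difference is in how the two arguments finish. The paper lower-bounds the two terms simultaneously by $\min(1,\rho_{\min}(R))\,\bigl(x^TS^2x+x^T(I_K-S^2)x\bigr)=\min(1,\rho_{\min}(R))$ and stops there; since $\rho_{\min}(R)\le\Tr(R)/K=1$, this yields only the \emph{non-strict} bound $\rho_{\min}(R_\alpha)\ge\rho_{\min}(R)$. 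You instead keep the sharper bound $1-(1-\rho_{\min}(R))s_{\max}^2$ and then confront strictness head on: the gap $(1-\rho_{\min}(R))(1-s_{\max}^2)$ is positive only if $\rho_{\min}(R)<1$, and your verification of this fact ($\Theta_{12}=\|\Sigma\sigma\|^2>0$ under \ref{ass:inverse}, hence $R_{12}\neq 0$, hence $R\neq I_K$, hence $\rho_{\min}(R)<1$ by the trace argument) is sound. So you actually prove more than the paper's own proof, which never addresses the strict inequality it claims. The flip side is that your argument cannot — and correctly does not — cover $K=1$: there $R=R_\alpha=(1)$ and the claimed strict inequality is simply false, as it is whenever $R=I_K$. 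That is a defect of the lemma's statement rather than of your proof, and it is harmless for the paper, since every later invocation of this lemma only requires the non-strict bound $\rho_{\min}(R_\alpha)\ge\rho_{\min}(R)$.
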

\begin{proof}
First remark that 
\begin{align*}
R_\alpha 
& =  D_\alpha^{-1/2} \Theta_\alpha D_\alpha^{-1/2}\\
& =  (D+\Delta_\alpha)^{-1/2} \Theta (D+\Delta_\alpha)^{-1/2} +  (D+\Delta_\alpha)^{-1/2} \Delta_\alpha (D+\Delta_\alpha)^{-1/2}\\
& = (D+\Delta_\alpha)^{-1/2} D^{1/2} R D^{1/2} (D+\Delta_\alpha)^{-1/2} +  (D+\Delta_\alpha)^{-1/2} \Delta_\alpha (D+\Delta_\alpha)^{-1/2}.
\end{align*}
Then, for any $x\in \mathbb{R}^K$ such that $x^Tx=1$, we have
\begin{align*}
\lefteqn{x^T R_\alpha x }\\
& = x^T (D+\Delta_\alpha)^{-1/2} D^{1/2} R D^{1/2} (D+\Delta_\alpha)^{-1/2}x + x^T (D+\Delta_\alpha)^{-1/2} \Delta_\alpha (D+\Delta_\alpha)^{-1/2} x\\
& \geq \rho_{\min}(R) \, x^T (D+\Delta_\alpha)^{-1/2} D (D+\Delta_\alpha)^{-1/2}x + x^T (D+\Delta_\alpha)^{-1/2} \Delta_\alpha (D+\Delta_\alpha)^{-1/2} x\\
& \geq \min(1,\rho_{\min}(R)) \times \left( x^T (D+\Delta_\alpha)^{-1/2} D (D+\Delta_\alpha)^{-1/2}x + x^T (D+\Delta_\alpha)^{-1/2} \Delta_\alpha (D+\Delta_\alpha)^{-1/2} x\right)\\
& = \min(1,\rho_{\min}(R)),
\end{align*}
which proves the desired result. 
\end{proof}

The following lemma provides a more accurate bound.
\begin{lemma}\label{Lem:RhoMinAlpha}
Assume that \ref{ass:inverse} holds. Then, for any $\alpha \in (\mathbb{R}^+)^K$, for any $x\in\mathbb{R}^K$,
	$$\rho_{\min}(R_{\alpha})x^Tx\ge	\rho_{\min}(R)x^Tx+\left(1-\rho_{\min}(R)\right)x^T(D+\Delta_\alpha)^{-\frac{1}{2}}\Delta_{\alpha}(D+\Delta_\alpha)^{-\frac{1}{2}}x.$$
\end{lemma}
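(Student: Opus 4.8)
The plan is to refine the computation already carried out in the proof of \cref{Lem:RhoMinAlpha1}. Writing $D_\alpha=D+\Delta_\alpha$, which is a positive diagonal (hence invertible) matrix since $D$ is positive under \ref{ass:inverse} and $\Delta_\alpha\succeq 0$, that proof records the decomposition
\[
R_\alpha = D_\alpha^{-1/2} D^{1/2} R D^{1/2} D_\alpha^{-1/2} + D_\alpha^{-1/2}\Delta_\alpha D_\alpha^{-1/2}.
\]
The two diagonal positive semi-definite matrices $P:=D_\alpha^{-1/2} D\, D_\alpha^{-1/2}$ and $Q:=D_\alpha^{-1/2}\Delta_\alpha D_\alpha^{-1/2}$ satisfy the key identity
\[
P+Q = D_\alpha^{-1/2}(D+\Delta_\alpha) D_\alpha^{-1/2} = I_K .
\]
It is precisely this identity, which was only used through the crude observation $P,Q\succeq 0$ in \cref{Lem:RhoMinAlpha1}, that I will exploit to keep track of the exact contribution of $\Delta_\alpha$.

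First I would fix $x\in\RR^K$ and split $x^T R_\alpha x$ along the decomposition into $x^T D_\alpha^{-1/2} D^{1/2} R D^{1/2} D_\alpha^{-1/2} x + x^T Q x$. Setting $y:=D^{1/2}D_\alpha^{-1/2}x$, the first summand equals $y^T R y$, which I bound from below by the Rayleigh-quotient inequality $y^T R y\ge \rho_{\min}(R)\,\|y\|^2=\rho_{\min}(R)\,x^T P x$ (legitimate since $R$ is positive definite under \ref{ass:inverse}). This produces
\[
x^T R_\alpha x \;\ge\; \rho_{\min}(R)\,x^T P x + x^T Q x .
\]
Next I would eliminate $P$ using $P=I_K-Q$, turning the right-hand side into
\[
\rho_{\min}(R)\big(x^Tx - x^T Q x\big) + x^T Q x = \rho_{\min}(R)\,x^Tx + \big(1-\rho_{\min}(R)\big)\,x^T Q x,
\]
which, after recalling that $Q=(D+\Delta_\alpha)^{-1/2}\Delta_\alpha(D+\Delta_\alpha)^{-1/2}$, is exactly the claimed right-hand side. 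To read the inequality as a statement on the smallest eigenvalue, I would finally take $x$ to be a unit eigenvector of $R_\alpha$ realizing $\rho_{\min}(R_\alpha)$, so that $x^T R_\alpha x=\rho_{\min}(R_\alpha)\,x^Tx$ and the quadratic-form bound becomes the displayed inequality.

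The argument is pure (diagonal) linear algebra, so no step is a serious obstacle; the only genuine idea is noticing $P+Q=I_K$, which lets the surplus over $\rho_{\min}(R)\,x^Tx$ be attached, with the nonnegative weight $1-\rho_{\min}(R)$, to the term $x^TQx$. The one point that requires care is the sign bookkeeping in this substitution: the weight $1-\rho_{\min}(R)$ is nonnegative because $\rho_{\min}(R)\le \tfrac1K\Tr(R)=1$ for a correlation matrix, and $Q\succeq 0$, so replacing $x^TPx$ by $x^Tx-x^TQx$ preserves the direction of the inequality and yields a bound that is never worse than—and is generically strictly sharper than—the estimate $\rho_{\min}(R_\alpha)\ge\min(1,\rho_{\min}(R))$ obtained in \cref{Lem:RhoMinAlpha1}.
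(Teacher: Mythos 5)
Your central computation is exactly the paper's proof: the same decomposition $R_\alpha = D_\alpha^{-1/2} D^{1/2} R D^{1/2} D_\alpha^{-1/2} + D_\alpha^{-1/2}\Delta_\alpha D_\alpha^{-1/2}$, the same Rayleigh bound applied to $y = D^{1/2}D_\alpha^{-1/2}x$, and the same use of the identity $D_\alpha^{-1/2}(D+\Delta_\alpha)D_\alpha^{-1/2} = I_K$ (which, incidentally, the proof of \cref{Lem:RhoMinAlpha1} also exploits in its final equality, not merely through $P,Q\succeq 0$ as you suggest). Both arguments establish, for every $x\in\RR^K$,
\begin{equation*}
x^T R_\alpha x \;\ge\; \rho_{\min}(R)\,x^Tx + \bigl(1-\rho_{\min}(R)\bigr)\,x^T(D+\Delta_\alpha)^{-\frac12}\Delta_\alpha(D+\Delta_\alpha)^{-\frac12}x .
\end{equation*}

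Where you and the paper part ways is your very last step, and it deserves a precise warning. The paper stops at the quadratic-form inequality above and declares the lemma proved: the left-hand side $\rho_{\min}(R_\alpha)\,x^Tx$ in the statement is evidently a misprint for $x^TR_\alpha x$, and it is the quadratic-form version that is actually invoked downstream (in the proof of \cref{lem:invertR}, through $y^T\Theta_\alpha y = x^TR_\alpha x$ for an arbitrary unit vector $x$). Your patch --- specializing $x$ to a unit eigenvector attaining $\rho_{\min}(R_\alpha)$ --- yields the displayed inequality only for that particular eigenvector, not \enquote{for any $x\in\RR^K$} as the statement demands; and no argument can close this gap, because the statement as literally written is false. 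Take $K=2$, $D=I_2$, $R=\Theta=\begin{pmatrix}1 & 1/2\\ 1/2 & 1\end{pmatrix}$ (realizable as a Krylov Gram matrix, e.g.\ with $p=2$, $\Sigma=\diag(1/10,\,19/8)$ and $\sigma$ chosen suitably) and $\Delta_\alpha=\diag(3,\varepsilon)$ with $\varepsilon$ small: then $\rho_{\min}(R_\alpha)\approx 1-\tfrac{1/2}{\sqrt{4}}=3/4$, while the right-hand side evaluated at $x=e_1$ is approximately $\tfrac12+\tfrac12\cdot\tfrac34 = 7/8 > 3/4$. So your instinct to prove the quadratic-form version was the right one; the eigenvector specialization should simply be deleted (it proves a true but strictly weaker, pointwise statement), and the lemma's left-hand side should be read as $x^TR_\alpha x$.
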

\begin{proof}
We have
\begin{align*}
	\lefteqn{x^T R_\alpha x } \nonumber\\
	& = x^T (D+\Delta_\alpha)^{-1/2} D^{1/2} R D^{1/2} (D+\Delta_\alpha)^{-1/2}x + x^T (D+\Delta_\alpha)^{-1/2} \Delta_\alpha (D+\Delta_\alpha)^{-1/2} x \nonumber \\
	& \geq \rho_{\min}(R) \times \left( x^T (D+\Delta_\alpha)^{-1/2} D (D+\Delta_\alpha)^{-1/2}x + x^T (D+\Delta_\alpha)^{-1/2} \Delta_\alpha (D+\Delta_\alpha)^{-1/2} x\right) \nonumber \\
	& \quad + (1-\rho_{\min}(R))x^T(D+\Delta_\alpha)^{-\frac{1}{2}}\Delta_{\alpha}(D+\Delta_\alpha)^{-\frac{1}{2}}x \\
	& \geq \rho_{\min}(R)x^Tx+\left(1-\rho_{\min}(R)\right)x^T(D+\Delta_\alpha)^{-\frac{1}{2}}\Delta_{\alpha}(D+\Delta_\alpha)^{-\frac{1}{2}}x.
\end{align*}
This proves \cref{Lem:RhoMinAlpha}.
\end{proof}
Actually, a straightforward consequence is that 	$$\rho_{\min}(R_{\alpha})\ge	\rho_{\min}(R)+\big(1-\rho_{\min}(R)\big)\underset{1\le i \le K}{\min}\left(\frac{\alpha_i}{\sigma^T\Sigma^{2i-1}\sigma+\alpha_i}\right).$$
Yet, the formulation in \cref{Lem:RhoMinAlpha} is more useful in the following.

We can also provide an upper bound on the spectral radius of $R_\alpha$.
\begin{lemma}\label{Lem:RhoMaxRalpha}
For any $\alpha \in (\mathbb{R}^+)^K$, we have
$$\rho(R_{\alpha})\le \rho(R).$$
\end{lemma}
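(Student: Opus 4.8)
The plan is to mirror the computation already carried out in the proofs of \cref{Lem:RhoMinAlpha1} and \cref{Lem:RhoMinAlpha}, but to push the inequality in the opposite direction. I would start from the same algebraic decomposition
$$ R_\alpha = (D+\Delta_\alpha)^{-1/2} D^{1/2} R D^{1/2} (D+\Delta_\alpha)^{-1/2} +  (D+\Delta_\alpha)^{-1/2} \Delta_\alpha (D+\Delta_\alpha)^{-1/2},$$
which is valid since $D_\alpha = D + \Delta_\alpha$ and $\Theta_\alpha = \Theta + \Delta_\alpha = D^{1/2} R D^{1/2} + \Delta_\alpha$. Fixing an arbitrary $x \in \mathbb{R}^K$ with $x^T x = 1$ and writing $Q_1 = x^T (D+\Delta_\alpha)^{-1/2} D (D+\Delta_\alpha)^{-1/2} x$ and $Q_2 = x^T (D+\Delta_\alpha)^{-1/2} \Delta_\alpha (D+\Delta_\alpha)^{-1/2} x$, I observe the fundamental identity $Q_1 + Q_2 = x^T (D+\Delta_\alpha)^{-1/2} (D+\Delta_\alpha) (D+\Delta_\alpha)^{-1/2} x = x^T x = 1$, since $D$ and $\Delta_\alpha$ are both diagonal.

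Next I would bound the first (non-diagonal) term. Setting $y = D^{1/2} (D+\Delta_\alpha)^{-1/2} x$, its contribution is $y^T R y \le \rho(R)\,\|y\|^2 = \rho(R)\, Q_1$. This gives the clean intermediate estimate $x^T R_\alpha x \le \rho(R)\, Q_1 + Q_2$.

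The key remaining observation — and what I expect to be the only genuine point of the argument — is that $\rho(R) \ge 1$. This holds because $R = D^{-1/2} \Theta D^{-1/2}$ is a correlation matrix: its diagonal entries are all equal to $1$, so $\Tr(R) = K$, whence the largest eigenvalue is at least the average eigenvalue, $\rho(R) \ge \Tr(R)/K = 1$. Armed with this, and since $Q_2 \ge 0$, I can absorb the second term, $Q_2 \le \rho(R)\, Q_2$, to obtain
$$ x^T R_\alpha x \le \rho(R)\, Q_1 + Q_2 \le \rho(R)\, Q_1 + \rho(R)\, Q_2 = \rho(R)\,(Q_1 + Q_2) = \rho(R).$$
Since this holds for every unit vector $x$, the variational characterization of the largest eigenvalue yields $\rho(R_\alpha) \le \rho(R)$, which is the claim. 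The main obstacle is genuinely just recognizing that $\rho(R) \ge 1$ is needed to convert the crude bound $\rho(R)\,Q_1 + Q_2$ into the sharp $\rho(R)$; the rest is the same bookkeeping with the diagonal splitting used in the preceding lemmas.
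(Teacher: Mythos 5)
Your proof is correct and takes essentially the same route as the paper's: the same splitting $R_\alpha = D_\alpha^{-1/2}D^{1/2}RD^{1/2}D_\alpha^{-1/2} + D_\alpha^{-1/2}\Delta_\alpha D_\alpha^{-1/2}$, the same bound $\rho(R)Q_1$ on the first term, and the same trace argument $\Tr(R)=K$ to get $\rho(R)\ge 1$. The only cosmetic difference is that you absorb $Q_2 \le \rho(R)Q_2$ directly, whereas the paper first bounds everything by $\max(1,\rho(R))$ and then observes this equals $\rho(R)$.
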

\begin{proof}
Let $x\in \mathbb{R}^K$ such that $x^Tx=1$. Then,
\begin{align*}
    x^TR_{\alpha}x&=x^TD_{\alpha}^{-\frac{1}{2}}\Theta_{\alpha}D_{\alpha}^{-\frac{1}{2}}x\\
    &=x^TD_{\alpha}^{-\frac{1}{2}}D^{\frac{1}{2}}RD^{\frac{1}{2}}D_{\alpha}^{-\frac{1}{2}}x+x^TD_{\alpha}^{-\frac{1}{2}}\Delta_{\alpha}D_{\alpha}^{-\frac{1}{2}}x\\
    &\le\rho(R)\,x^TD_{\alpha}^{-\frac{1}{2}}DD_{\alpha}^{-\frac{1}{2}}x+x^TD_{\alpha}^{-\frac{1}{2}}\Delta_{\alpha}D_{\alpha}^{-\frac{1}{2}}x\\
    &\le \max(1,\rho(R))\,x^TD_{\alpha}^{\frac{1}{2}}(D+\Delta_{\alpha})D^{-\frac{1}{2}}_{\alpha}x\\
    &\le \max(1,\rho(R)).
\end{align*}
Note that $\Tr(R)=K\leq K\rho(R)$. Consequently, $\rho(R)\geq 1$.
\end{proof}

\subsubsection[]{Inversion of $\hat{R}_{\alpha}$}

We first state that the inversion of the matrix $\hat{R}_{\alpha}$ exists with high probability.

\begin{lemma}
\label{lem:invertR}
Assume that \ref{ass:inverse} holds and consider $\alpha$ in \eqref{eq:alphai} with $c_{\delta}\geq 16C_\delta$. Then, on the event $\mathcal{A}_\delta$ defined in  \cref{Prop:Majoration3Termes}, we have
\begin{align*}
    \rho_{\min}(\hat{R}_{\alpha}) \ge \frac{\rho_{\min}(R)}{2} \quad \mathrm{and} \quad   \rho(\hat{R}_{\alpha}-R_{\alpha})\le\rho(R).
\end{align*}
\end{lemma}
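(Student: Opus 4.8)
The plan is to mirror the proof of \cref{lem:R}, the new ingredient being that the regularization $\Delta_\alpha$ must play the role previously played by \ref{ass:signal}. The first observation is that $\Delta_\alpha$ cancels in the difference: since $D_\alpha=D+\Delta_\alpha$ and $\hat\Theta_\alpha-\Theta_\alpha=\hat\Theta-\Theta$, one has $\hat R_\alpha-R_\alpha=D_\alpha^{-1/2}(\hat\Theta-\Theta)D_\alpha^{-1/2}$. I would fix a unit vector $x\in\RR^K$ and set $u_\alpha=\Sigma^{1/2}GD_\alpha^{-1/2}x$ and $v_\alpha=\Sigma^{1/2}(\hat G-G)D_\alpha^{-1/2}x$, so that $\|u_\alpha\|^2=x^TD_\alpha^{-1/2}\Theta D_\alpha^{-1/2}x$ and $\|v_\alpha\|^2=x^TD_\alpha^{-1/2}(\hat G-G)^T\Sigma(\hat G-G)D_\alpha^{-1/2}x$, while $x^TR_\alpha x=\|u_\alpha\|^2+s$ with $s:=x^TD_\alpha^{-1/2}\Delta_\alpha D_\alpha^{-1/2}x\in[0,1]$. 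The engine of the whole argument is a single perturbation estimate: using \eqref{eq:inequA}, \cref{Cor:Majoration3Termes} (line \eqref{eq13}), the inequalities $\Tr(\Sigma^{i+j})\le\sqrt{\Tr(\Sigma^{2i})\Tr(\Sigma^{2j})}$ and $\Tr(\Sigma^{2i})\le\rho(\Sigma)^i\Tr(\Sigma^i)$, together with $(D_\alpha)_{ii}=\sigma^T\Sigma^{2i-1}\sigma+\alpha_i\ge\alpha_i$ and the definition \eqref{eq:alphai}, I would show that on $\mathcal A_\delta$,
$$\rho\!\left(D_\alpha^{-1/2}(\hat G-G)^T\Sigma(\hat G-G)D_\alpha^{-1/2}\right)\le\frac{C_\delta}{c_\delta}\le\frac1{16}.$$

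For the lower bound on $\rho_{\min}(\hat R_\alpha)$, I would expand $x^T\hat R_\alpha x=x^TR_\alpha x+\|v_\alpha\|^2+2v_\alpha^Tu_\alpha$ and apply $2|v_\alpha^Tu_\alpha|\le\tfrac14\|u_\alpha\|^2+4\|v_\alpha\|^2$ with $\|u_\alpha\|^2\le x^TR_\alpha x$ to reach $x^T\hat R_\alpha x\ge\tfrac34 x^TR_\alpha x-3\|v_\alpha\|^2$. The naive bound $\|v_\alpha\|^2\le C_\delta/c_\delta$ leaves a gap when $\rho_{\min}(R)$ is small, so the crucial refinement is to track the regularization mass $s$ in the directions carrying $x$. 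Since each $\Sigma^{i+j-1}$ is positive semidefinite, the matrix defining $\|v_\alpha\|^2$ has nonnegative entries; this lets me factor the quadratic form and, after two Cauchy--Schwarz steps using $\tfrac{\tau^2}{n}\Tr(\Sigma^{2i})\le\alpha_i/(c_\delta K)$, obtain the sharper estimate $\|v_\alpha\|^2\le\tfrac{C_\delta}{c_\delta}s$. Combining this with the direction-dependent lower bound $x^TR_\alpha x\ge\rho_{\min}(R)+(1-\rho_{\min}(R))s$ from \cref{Lem:RhoMinAlpha} and using $0\le s\le1$, a short case distinction on the sign of the coefficient of $s$ yields $x^T\hat R_\alpha x\ge\tfrac12\rho_{\min}(R)$ for every unit $x$, which is the first claim.

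For the upper bound $\rho(\hat R_\alpha-R_\alpha)\le\rho(R)$, I would expand $x^T(\hat R_\alpha-R_\alpha)x=\|v_\alpha\|^2+2u_\alpha^Tv_\alpha$ and use $2|u_\alpha^Tv_\alpha|\le\tfrac12\|u_\alpha\|^2+2\|v_\alpha\|^2$, giving $|x^T(\hat R_\alpha-R_\alpha)x|\le\tfrac12\|u_\alpha\|^2+3\|v_\alpha\|^2$. Here $\|u_\alpha\|^2\le x^TR_\alpha x\le\rho(R_\alpha)\le\rho(R)$ by \cref{Lem:RhoMaxRalpha}, and $\|v_\alpha\|^2\le1/16$ by the perturbation estimate above, so the right-hand side is at most $\tfrac12\rho(R)+\tfrac3{16}$. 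Since $\rho(R)\ge1$ (because $\Tr(R)=K$ forces $\rho(R)\ge1$), this is $\le\rho(R)$; the matching lower bound is obtained identically, yielding $\rho(\hat R_\alpha-R_\alpha)\le\rho(R)$.

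The main obstacle is the first inequality. Unlike in \cref{lem:R}, where \ref{ass:signal} makes $\|v_\alpha\|^2$ uniformly tiny, here the perturbation is only small where the regularization is active, so a uniform bound is insufficient. The delicate point is therefore to prove the coupled estimate $\|v_\alpha\|^2\le\tfrac{C_\delta}{c_\delta}s$ and to match it against the direction-dependent lower bound on $x^TR_\alpha x$ coming from \cref{Lem:RhoMinAlpha}; once these two are in hand, the rest is a transcription of the computations already performed for \cref{lem:R} and \cref{Th:1}.
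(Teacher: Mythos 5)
Your proposal is correct and takes essentially the same route as the paper: its proof also expands the quadratic form via $2ab\le a^2+b^2$, derives exactly your coupled estimate entrywise (via \eqref{eq:inequA}, \cref{Cor:Majoration3Termes}, Cauchy--Schwarz and the definition \eqref{eq:alphai}, so that the error term is bounded by $(C_\delta/c_\delta)\,x^TD_\alpha^{-1/2}\Delta_\alpha D_\alpha^{-1/2}x$), invokes \cref{Lem:RhoMinAlpha} for the direction-dependent lower bound, and settles the second claim with the spectral bound $C_\delta/c_\delta\le 1/16$ together with $\rho(R_\alpha)\ge 1$ and \cref{Lem:RhoMaxRalpha}. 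The only difference is cosmetic: where you conclude the first claim by a case distinction on the sign of the coefficient of $s$, the paper splits the error term so that one piece is absorbed by $\tfrac34(1-\rho_{\min}(R))\,y^T\Delta_\alpha y$ and the remainder is bounded by $\tfrac14\rho_{\min}(R)$.
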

\begin{proof}
Let $x\in\RR^{K}$ such that $x^Tx=1$ and denote $y=D_{\alpha}^{-\frac{1}{2}}x$. Then
\begin{align*}
    x^T\hat{R}_{\alpha}x&=y^T\hat{G}^T\Sigma\hat{G}y+y^T\Delta_{\alpha}y\\
    &=y^T(\hat{G}-G)^T\Sigma(\hat{G}-G)y+y^T\big(G^T\Sigma G+\Delta_{\alpha}\big)y+2y^T(\hat{G}-G)^{T}\Sigma G y\\
	&\ge y^T\Theta_{\alpha}y-2|y^T(\hat{G}-G)^T\Sigma Gy|.
\end{align*}
Applying inequality $2ab\leq a^2+b^2$ with well chosen $a,b$, we get
\begin{align*}
	x^T\hat{R}_{\alpha}x&\ge y^T\Theta_{\alpha}y-\frac{1}{4}y^T\big(G^T\Sigma G\big)y-4y^T(\hat{G}-G)^T\Sigma(\hat{G}-G)y\\
	&\ge y^T\Theta y+y^T\Delta_{\alpha}y-\frac{1}{4} y^T\Theta y-4y^T(\hat{G}-G)^T\Sigma(\hat{G}-G)y\\
	&\ge \frac{3}{4}y^T\Theta_{\alpha}y-4y^T(\hat{G}-G)^T\Sigma(\hat{G}-G)y.
\end{align*}
Considering the fact that $y^T\Theta_{\alpha}y=x^TR_{\alpha}x$, we can use \cref{Lem:RhoMinAlpha} to have
$$y^T\Theta_{\alpha}y\ge \rho_{\min}(R)+(1-\rho_{\min}(R))y^T\Delta_{\alpha}y.$$
We obtain 
\begin{align*}
    x^T\hat{R}_{\alpha}x&\ge \frac{3}{4}\rho_{\min}(R)+\frac{3}{4}(1-\rho_{\min}(R))y^T\Delta_{\alpha}y-4y^T(\hat{G}-G)^T\Sigma(\hat{G}-G)y\\
    &\ge \frac{3}{4}\rho_{\min}(R)+\frac{3}{4}(1-\rho_{\min}(R))\left(y^T\Delta_{\alpha}y-\frac{16}{3}y^T(\hat{G}-G)^T\Sigma(\hat{G}-G)y\right)\\
    &\quad -4\rho_{\min}(R)y^T(\hat{G}-G)^T\Sigma(\hat{G}-G)y.
\end{align*}
We apply \cref{Cor:Majoration3Termes} and Cauchy-Schwarz inequality to get, on $\A_{\delta}$,  
\begin{align*}
    y^T(\hat{G}-G)^{T}\Sigma(\hat{G}-G)y&=\sum_{i,j}^{K}y_iy_j(\hat{\sigma}-\sigma)^T\Sigma^{i+j-1}(\hat{\sigma}-\sigma)\\
    &\le\sum_{i,j}^{K}|y_i| |y_j| C_{\delta}\frac{\tau^2}{n}\Tr(\Sigma^{i+j})\\
    &\le \sum_{i,j}^{K}|y_i| |y_j| C_{\delta}\frac{\tau^2}{n} \sqrt{\Tr(\Sigma^{2i})}\sqrt{\Tr(\Sigma^{2j})}\\
    &\le \bigg(\sum_{i=1}^{K}y_i\sqrt{C_{\delta}\frac{\tau^2}{n}\Tr(\Sigma^{2i})}\bigg)^{2}\\
    &\le C_{\delta}K\frac{\tau^2}{n}\sum_{i=1}^{K}y_i^{2}\Tr(\Sigma^{2i}).
\end{align*}
Then, we have 
\begin{multline*}
    x^T\hat{R}_{\alpha}x\ge \frac{3}{4}\rho_{\min}(R)+\frac{3}{4}(1-\rho_{\min}(R))\left(\sum_{i=1}^{K}y_{i}^{2}(\alpha_i-\frac{16}{3}C_{\delta}K\frac{\tau^2}{n}\Tr(\Sigma^{2i}))\right)\\
    -\rho_{\min}(R)\left(\sum_{i=1}^{K}y_i^{2}4C_{\delta}K\frac{\tau^2}{n}\Tr(\Sigma^{2i})\right).
\end{multline*}
The definition of $\alpha_i$ simplifies the inequality to 
\begin{align*}
    x^T\hat{R}_{\alpha}x&\ge\frac{3}{4}\rho_{\min}(R)-\rho_{\min}(R)\left(\sum_{i=1}^{K}y_i^{2}4C_{\delta}K\frac{\tau^2}{n}\Tr(\Sigma^{2i})\right)\\
    &\ge \frac{3}{4}\rho_{\min}(R)-\rho_{\min}(R)\left(\sum_{i=1}^{K}x_i^2\frac{4C_{\delta}K\frac{\tau^2}{n}\Tr(\Sigma^{2i})}{\sigma^T\Sigma^{2i-1}\sigma+\alpha_i}\right)\\
    &\ge \frac{3}{4}\rho_{\min}(R)-\rho_{\min}(R)\left(\sum_{i=1}^{K}x_i^2\frac{4C_{\delta}K\frac{\tau^2}{n}\Tr(\Sigma^{2i})}{\alpha_i}\right)\\
    &\ge \frac{\rho_{\min}(R)}{2},
\end{align*}
where the last step results from the definition of $\alpha_i$. This proves the first part of the lemma.

Let us now prove the second part. We have 
\begin{align*}
    x^T(R_{\alpha}-\hat{R}_{\alpha})x&\le x^TD^{-\frac{1}{2}}_{\alpha}(\hat{\Theta}_{\alpha}-\Theta_{\alpha})D^{-\frac{1}{2}}_{\alpha}x\\
	&\le x^TD^{-\frac{1}{2}}_{\alpha}(\hat{G}^T\Sigma\hat{G}-G^T\Sigma G)D^{-\frac{1}{2}}_{\alpha}x\\
	&\le x^TD^{-\frac{1}{2}}_{\alpha}\big((\hat{G}-G)^T\Sigma(\hat{G}-G)-2G^T\Sigma(\hat{G}-G)\big)D^{-\frac{1}{2}}_{\alpha}x\\
	&\le x^TD^{-\frac{1}{2}}_{\alpha}(\hat{G}-G)^T\Sigma(\hat{G}-G)D^{-\frac{1}{2}}_{\alpha}x-2x^TD^{-\frac{1}{2}}_{\alpha}G^T\Sigma(\hat{G}-G)D^{-\frac{1}{2}}_{\alpha}x\\
	&\le \rho(D^{-\frac{1}{2}}_{\alpha}(\hat{G}-G)^T\Sigma(\hat{G}-G)D^{-\frac{1}{2}}_{\alpha})+\frac{1}{2}x^TD^{-\frac{1}{2}}_{\alpha}\Theta_{\alpha}D^{-\frac{1}{2}}_{\alpha}x\\ 
	&\quad + 2x^TD^{-\frac{1}{2}}_{\alpha}(\hat{G}-G)^T\Sigma(\hat{G}-G)D^{-\frac{1}{2}}_{\alpha}x\\
	&\le 3\rho(D^{-\frac{1}{2}}_{\alpha}(\hat{G}-G)^T\Sigma(\hat{G}-G)D^{-\frac{1}{2}}_{\alpha})+\frac{1}{2}\rho(R_{\alpha})
\end{align*}
Using \eqref{eq:inequA} and \cref{Cor:Majoration3Termes}, we get 
\begin{align*}
    \rho(D^{-\frac{1}{2}}_{\alpha}(\hat{G}-G)^T\Sigma(\hat{G}-G)D^{-\frac{1}{2}}_{\alpha})&\le K \underset{1\le i,j \le K}{\max}\frac{(\hat{\sigma}-\sigma)^T\Sigma^{i+j-1}(\hat{\sigma}-\sigma)}{\sqrt{\sigma^T\Sigma^{2i-1}\sigma+\alpha_i}\sqrt{\sigma^T\Sigma^{2j-1}\sigma+\alpha_j}}\\
    &\le K \underset{1\le i,j\le K}{\max}\frac{C_{\delta}\Tr(\Sigma^{i+j})}{\sqrt{\sigma^T\Sigma^{2i-1}\sigma+\alpha_i}\sqrt{\sigma^T\Sigma^{2j-1}\sigma+\alpha_j}}\\
    &\le C_{\delta} \underset{1\le i,j\le K}{\max}\frac{\sqrt{K\frac{\tau^2}{n}\Tr(\Sigma^{2i})}}{\sqrt{\alpha_i}}\frac{\sqrt{K\frac{\tau^2}{n}\Tr(\Sigma^{2j})}}{\sqrt{\alpha_j}}\\
    &\le \frac{C_{\delta}}{c_{\delta}}.
\end{align*}
Hence,
$$
    x^T(R_{\alpha}-\hat{R}_{\alpha})x
    \le3\frac{C_{\delta}}{c_{\delta}}+\frac{1}{2}\rho(R_{\alpha})\le \rho(R_{\alpha}),
    $$
where we used the fact that $\rho(R_{\alpha})\ge 1\ge 6\frac{C_{\delta}}{c_{\delta}}.$ We conclude with \cref{Lem:RhoMaxRalpha}.
\end{proof}

\subsection{Preliminary and technical results}
In this part, we propose bounds on three major terms that appears in the proof of \cref{Th:2} (see Section \ref{s:Th:2} below). 

\subsubsection{First term}
\begin{lemma}\label{lem:TermIalpha}
On the event $\mathcal{A}_\delta$ defined in  \cref{Prop:Majoration3Termes}, we have 
\begin{align*}
\mathrm{I}_{\alpha}
& := \Lambda_{\alpha}^T(G-\hat{G})^T\Sigma(G-\hat{G})\Lambda_{\alpha}\\
& \le 2C_{\delta}\frac{\tau^2}{n}\big(\sum_{i=1}^{K}|\Lambda_i|\sqrt{\Tr(\Sigma^{2i})}\big)^2+2\rho_{\min}(R)^{-2}\frac{C_\delta}{c_{\delta}}\|D_{\alpha}^{-\frac{1}{2}}\Delta_{\alpha}\Lambda\|^2.
\end{align*}
\end{lemma}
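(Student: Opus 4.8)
The plan is to reduce $\mathrm{I}_\alpha$ to the already-controlled quantity $\mathrm{I}$ from \cref{lem:termI} plus a remainder governed entirely by the regularization. The starting point is a clean algebraic identity relating $\Lambda_\alpha$ and $\Lambda$: since $G^T\sigma=\Theta\Lambda=(\Theta_\alpha-\Delta_\alpha)\Lambda$, one has
$$\Lambda_\alpha=\Theta_\alpha^{-1}G^T\sigma=\Theta_\alpha^{-1}(\Theta_\alpha-\Delta_\alpha)\Lambda=\Lambda-\Theta_\alpha^{-1}\Delta_\alpha\Lambda.$$
Writing $Q=(G-\hat{G})^T\Sigma(G-\hat{G})$, which is positive semidefinite, I would apply the elementary bound $(a+b)^TQ(a+b)\le 2a^TQa+2b^TQb$ with $a=\Lambda$ and $b=-\Theta_\alpha^{-1}\Delta_\alpha\Lambda$, obtaining
$$\mathrm{I}_\alpha\le 2\,\mathrm{I}+2\,(\Theta_\alpha^{-1}\Delta_\alpha\Lambda)^TQ(\Theta_\alpha^{-1}\Delta_\alpha\Lambda).$$
The first term is immediately bounded by \cref{lem:termI}, producing exactly the contribution $2C_\delta\frac{\tau^2}{n}\big(\sum_{i}|\Lambda_i|\sqrt{\Tr(\Sigma^{2i})}\big)^2$ claimed in the statement.

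It then remains to control the second term. I would conjugate by $D_\alpha^{\pm 1/2}$ to recognize the matrix already estimated inside the proof of \cref{lem:invertR}: with $w=\Theta_\alpha^{-1}\Delta_\alpha\Lambda$,
$$w^TQw=(D_\alpha^{1/2}w)^T\big(D_\alpha^{-1/2}(\hat{G}-G)^T\Sigma(\hat{G}-G)D_\alpha^{-1/2}\big)(D_\alpha^{1/2}w)\le\frac{C_\delta}{c_\delta}\,\|D_\alpha^{1/2}w\|^2,$$
where the sign of $\hat{G}-G$ is irrelevant in the quadratic form and where I invoke the spectral estimate $\rho\big(D_\alpha^{-1/2}(\hat{G}-G)^T\Sigma(\hat{G}-G)D_\alpha^{-1/2}\big)\le C_\delta/c_\delta$ established in \cref{lem:invertR} on the event $\mathcal{A}_\delta$ (this is the only deviation input, inherited from \cref{Cor:Majoration3Termes}). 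Finally I would simplify $\|D_\alpha^{1/2}w\|^2$ using $\Theta_\alpha=D_\alpha^{1/2}R_\alpha D_\alpha^{1/2}$, hence $D_\alpha^{1/2}\Theta_\alpha^{-1}=R_\alpha^{-1}D_\alpha^{-1/2}$, so that $D_\alpha^{1/2}w=R_\alpha^{-1}D_\alpha^{-1/2}\Delta_\alpha\Lambda$ and
$$\|D_\alpha^{1/2}w\|^2\le\rho_{\min}(R_\alpha)^{-2}\|D_\alpha^{-1/2}\Delta_\alpha\Lambda\|^2\le\rho_{\min}(R)^{-2}\|D_\alpha^{-1/2}\Delta_\alpha\Lambda\|^2,$$
the last step using $\rho_{\min}(R_\alpha)\ge\rho_{\min}(R)$ from \cref{Lem:RhoMinAlpha1}. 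Collecting the two contributions yields precisely the stated bound.

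Once the decomposition $\Lambda_\alpha=\Lambda-\Theta_\alpha^{-1}\Delta_\alpha\Lambda$ is in hand, all the steps are mechanical; the only point requiring care is the bookkeeping of the conjugation by $D_\alpha^{\pm 1/2}$ together with the passage $D_\alpha^{1/2}\Theta_\alpha^{-1}=R_\alpha^{-1}D_\alpha^{-1/2}$, which is where the factor $\rho_{\min}(R)^{-2}$ enters. The main conceptual obstacle, and the reason the lemma is so clean, is recognizing that the regularization bias $\Theta_\alpha^{-1}\Delta_\alpha\Lambda$ is exactly the vector whose $Q$-norm the spectral bound of \cref{lem:invertR} already controls, so that no new probabilistic argument beyond \cref{Cor:Majoration3Termes} is needed.
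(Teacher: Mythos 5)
Your proof is correct and follows essentially the same route as the paper: the identity $\Lambda_\alpha=\Lambda-\Theta_\alpha^{-1}\Delta_\alpha\Lambda$, the $2a^TQa+2b^TQb$ split into $2\,\mathrm{I}$ plus a remainder, the conjugation $\Theta_\alpha^{-1}=D_\alpha^{-1/2}R_\alpha^{-1}D_\alpha^{-1/2}$, and the bounds $\rho_{\min}(R_\alpha)\ge\rho_{\min}(R)$ (\cref{Lem:RhoMinAlpha1}) and $\rho\big(D_\alpha^{-1/2}(\hat G-G)^T\Sigma(\hat G-G)D_\alpha^{-1/2}\big)\le C_\delta/c_\delta$. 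The only (harmless) difference is that you cite this last spectral estimate from the proof of \cref{lem:invertR}, whereas the paper re-derives it inline via \eqref{eq:inequA}, \cref{Cor:Majoration3Termes}, Cauchy--Schwarz and the definition of $\alpha$ in \eqref{eq:alphai}.
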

\begin{proof}
First,
\begin{align*}
    \I_\alpha&=\sigma^TG\Theta^{-1}\Theta\Theta^{-1}_{\alpha}(G-\hat{G})^T\Sigma(G-\hat{G})\Theta^{-1}_{\alpha}\Theta\Theta^{-1}G^T\sigma\\
	&=\Lambda^T(\Theta_{\alpha}-\Delta_{\alpha})\Theta_{\alpha}^{-1}(G-\hat{G})^T\Sigma(G-\hat{G})\Theta_{\alpha}^{-1}(\Theta_{\alpha}-\Delta_{\alpha})\Lambda\\
	&\le2\I+2\Lambda^T\Delta_{\alpha}\Theta_{\alpha}^{-1}(G-\hat{G})^T\Sigma(G-\hat{G})\Theta_{\alpha}^{-1}\Delta_{\alpha}\Lambda,
	\end{align*}
where the term $\I$ is defined in \cref{lem:termI} above. Then,with \cref{Lem:RhoMinAlpha1},
\begin{align*}
    \I_\alpha	&\le2\I+2\Lambda^T\Delta_{\alpha}D_{\alpha}^{-\frac{1}{2}}R_{\alpha}^{-1}D_{\alpha}^{-\frac{1}{2}}(G-\hat{G})^T\Sigma(G-\hat{G})D_{\alpha}^{-\frac{1}{2}}
	R_{\alpha}^{-1}D_{\alpha}^{-\frac{1}{2}}\Delta_{\alpha}\Lambda\\
	&\le 2\I + 2\rho\big(D_{\alpha}^{-\frac{1}{2}}(G-\hat{G})\Sigma(G-\hat{G})D_{\alpha}^{-\frac{1}{2}}\big)\times \rho_{\min}(R)^{-2}\times \|D_{\alpha}^{-\frac{1}{2}}\Delta_{\alpha}\Lambda\|^2\\
	&\le 2\I+2K\underset{1\le i,j\le K}{\max}\left\{\frac{(\hat{\sigma}-\sigma)^T\Sigma^{i+j-1}(\hat{\sigma}-\sigma)}{\sqrt{\sigma^T\Sigma^{2i-1}\sigma+\alpha_i}\sqrt{\sigma^T\Sigma^{2j-1}\sigma+\alpha_j}}\right\}\rho_{\min}(R)^{-2}\|D_{\alpha}^{-\frac{1}{2}}\Delta_{\alpha}\Lambda\|^2,
\end{align*}
where the last step results from equation \eqref{eq:inequA}. We apply Cauchy-Schwarz inequality and \cref{Cor:Majoration3Termes} to get
\begin{align*}
	\I_{\alpha}&\le2\I+2\underset{1\le i,j\le K}{\max}\left\{\sqrt{C_{\delta}\frac{K\frac{\tau^2}{n}\Tr(\Sigma^{2i})}{\alpha_i}}\sqrt{C_{\delta}\frac{K\frac{\tau^2}{n}\Tr(\Sigma^{2j})}{\alpha_j}}\right\}\rho_{\min}(R)^{-2}\|D_{\alpha}^{-\frac{1}{2}}\Delta_{\alpha}\Lambda\|^2\\
	&\le 2\I+2\rho_{\min}(R)^{-2}\frac{C_\delta}{c_{\delta}}\|D_{\alpha}^{-\frac{1}{2}}\Delta_{\alpha}\Lambda\|^2.
\end{align*}
The definition of $\alpha$ in \cref{eq:alphai} justifies the last step. 
We conclude by the bound on the term $\I$ given by \cref{lem:termI}.
\end{proof}

\subsubsection{Second term}

\begin{lemma}\label{lem:TermIIalpha}
On the event $\A_{\delta} $ we have 
\begin{align*}
\tilde{\mathrm{II}}_{\alpha}
& := \sigma^TG\Theta_{\alpha}^{-1}(\Theta_{\alpha}-\hat{\Theta}_{\alpha})D_{\alpha}^{-1}(\Theta_{\alpha}-\hat{\Theta}_{\alpha})\Theta_{\alpha}^{-1}G^T\sigma\\
& \le 2\mathrm{II}_{\alpha}+2\frac{\rho(R)^2}{\rho_{\min}(R)^2}\|D_{\alpha}^{-\frac{1}{2}}\Delta_{\alpha}\Lambda\|^2,
\end{align*}
where 
$$ \mathrm{II}_{\alpha}:=\Lambda^T(\Theta-\hat{\Theta})D_\alpha^{-1}(\Theta-\hat{\Theta})\Lambda.$$
\end{lemma}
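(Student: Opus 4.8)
The plan is to reduce $\tilde{\mathrm{II}}_{\alpha}$ to the simpler quantity $\mathrm{II}_{\alpha}$ by exploiting that the Ridge shift cancels in the fluctuation, since $\hat{\Theta}_{\alpha}-\Theta_{\alpha}=\hat{\Theta}-\Theta$. First I would set $M:=\Theta-\hat{\Theta}$, which is symmetric, and record that $\sigma^TG\Theta_{\alpha}^{-1}=(\Theta_{\alpha}^{-1}G^T\sigma)^T=\Lambda_{\alpha}^T$, so that $\tilde{\mathrm{II}}_{\alpha}=\Lambda_{\alpha}^T M D_{\alpha}^{-1} M \Lambda_{\alpha}$. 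The pivotal identity is $\Lambda_{\alpha}=\Lambda-\Theta_{\alpha}^{-1}\Delta_{\alpha}\Lambda$, obtained by writing $G^T\sigma=\Theta\Lambda$ and $\Theta=\Theta_{\alpha}-\Delta_{\alpha}$, whence $\Lambda_{\alpha}=\Theta_{\alpha}^{-1}\Theta\Lambda=(I-\Theta_{\alpha}^{-1}\Delta_{\alpha})\Lambda$.

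Next I would apply the elementary bound $(a+b)^TQ(a+b)\le 2a^TQa+2b^TQb$, valid for the positive semidefinite matrix $Q=MD_{\alpha}^{-1}M$, with $a=\Lambda$ and $b=-\Theta_{\alpha}^{-1}\Delta_{\alpha}\Lambda$. This immediately yields
\begin{equation*}
\tilde{\mathrm{II}}_{\alpha}\le 2\,\Lambda^T M D_{\alpha}^{-1} M \Lambda + 2\,(\Theta_{\alpha}^{-1}\Delta_{\alpha}\Lambda)^T M D_{\alpha}^{-1} M (\Theta_{\alpha}^{-1}\Delta_{\alpha}\Lambda),
\end{equation*}
where the first term is exactly $2\,\mathrm{II}_{\alpha}$ by definition. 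It then remains to bound the second term by $2\frac{\rho(R)^2}{\rho_{\min}(R)^2}\|D_{\alpha}^{-\frac{1}{2}}\Delta_{\alpha}\Lambda\|^2$.

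For this correction term I would set $u=\Theta_{\alpha}^{-1}\Delta_{\alpha}\Lambda$ and rewrite $u^T M D_{\alpha}^{-1} M u=\|D_{\alpha}^{-\frac{1}{2}}Mu\|^2=\|N\,D_{\alpha}^{\frac{1}{2}}u\|^2$, where $N:=D_{\alpha}^{-\frac{1}{2}}MD_{\alpha}^{-\frac{1}{2}}=R_{\alpha}-\hat{R}_{\alpha}$ is symmetric. On the event $\mathcal{A}_\delta$, \cref{lem:invertR} gives $\rho(\hat{R}_{\alpha}-R_{\alpha})\le\rho(R)$, hence $\|N\,D_{\alpha}^{\frac{1}{2}}u\|^2\le\rho(R)^2\|D_{\alpha}^{\frac{1}{2}}u\|^2$. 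Using $\Theta_{\alpha}^{-1}=D_{\alpha}^{-\frac{1}{2}}R_{\alpha}^{-1}D_{\alpha}^{-\frac{1}{2}}$ I get $D_{\alpha}^{\frac{1}{2}}u=R_{\alpha}^{-1}D_{\alpha}^{-\frac{1}{2}}\Delta_{\alpha}\Lambda$, so that $\|D_{\alpha}^{\frac{1}{2}}u\|^2\le\rho_{\min}(R_{\alpha})^{-2}\|D_{\alpha}^{-\frac{1}{2}}\Delta_{\alpha}\Lambda\|^2$. Finally \cref{Lem:RhoMinAlpha1} ($\rho_{\min}(R_{\alpha})>\rho_{\min}(R)$) replaces $\rho_{\min}(R_{\alpha})$ by $\rho_{\min}(R)$, and combining the three estimates closes the proof.

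The main obstacle is bookkeeping rather than conceptual: one must observe that the Ridge perturbation drops out of $\hat{\Theta}_{\alpha}-\Theta_{\alpha}$, isolate the single surviving discrepancy $\Lambda_{\alpha}-\Lambda=-\Theta_{\alpha}^{-1}\Delta_{\alpha}\Lambda$, and then conjugate by $D_{\alpha}^{\pm\frac{1}{2}}$ so that the fluctuation becomes $N=R_{\alpha}-\hat{R}_{\alpha}$, which \cref{lem:invertR} controls. The one subtle point is that passing from $\|N\,D_{\alpha}^{\frac{1}{2}}u\|^2$ to $\rho(R)^2\|D_{\alpha}^{\frac{1}{2}}u\|^2$ requires the two-sided (operator-norm) control of the symmetric matrix $N$, which is precisely what the two-sided deviation estimate underlying \cref{lem:invertR} supplies on $\mathcal{A}_\delta$.
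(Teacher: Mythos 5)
Your proof is correct and follows essentially the same route as the paper: both exploit $\hat{\Theta}_{\alpha}-\Theta_{\alpha}=\hat{\Theta}-\Theta$, split $\Lambda_{\alpha}=\Lambda-\Theta_{\alpha}^{-1}\Delta_{\alpha}\Lambda$, apply the inequality $(a+b)^TQ(a+b)\le 2a^TQa+2b^TQb$ with $Q=MD_{\alpha}^{-1}M$, and control the correction term by conjugating with $D_{\alpha}^{\pm\frac{1}{2}}$ so that \cref{lem:invertR} (for $\rho(R_{\alpha}-\hat{R}_{\alpha})\le\rho(R)$) and \cref{Lem:RhoMinAlpha1} (for $\rho_{\min}(R_{\alpha})\ge\rho_{\min}(R)$) apply. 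The only cosmetic difference is that you phrase the decomposition through the identity for $\Lambda_{\alpha}$ while the paper writes $\sigma^TG\Theta_{\alpha}^{-1}=\Lambda^T(\Theta_{\alpha}-\Delta_{\alpha})\Theta_{\alpha}^{-1}$, which is the same thing.
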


\begin{proof}
Note that $\hat{\Theta}_{\alpha}-\Theta_{\alpha}=\hat{\Theta}-\Theta$. Hence
\begin{align*}
    \tilde{\mathrm{II}}_{\alpha}&=\sigma^TG\Theta_{\alpha}^{-1}(\Theta_{\alpha}-\hat{\Theta}_{\alpha})D_{\alpha}^{-1}(\Theta_{\alpha}-\hat{\Theta}_{\alpha})\Theta_{\alpha}^{-1}G^T\sigma\\
	&=\sigma^TG\Theta_{\alpha}^{-1}(\Theta-\hat{\Theta})D_{\alpha}^{-1}(\Theta-\hat{\Theta})\Theta_{\alpha}^{-1}G^T\sigma\\
	&=\sigma^TG\Theta^{-1}\Theta\Theta_{\alpha}^{-1}(\Theta-\hat{\Theta})D_{\alpha}^{-1}(\Theta-\hat{\Theta})\Theta_{\alpha}^{-1}\Theta\Theta^{-1}G^T\sigma\\
	&=\Lambda(\Theta_{\alpha}-\Delta_{\alpha})\Theta_{\alpha}^{-1}(\Theta-\hat{\Theta})D_{\alpha}^{-1}(\Theta-\hat{\Theta})\Theta_{\alpha}^{-1}(\Theta_{\alpha}-\Delta_{\alpha})\Lambda.
\end{align*}
By developing this inequality we end up with 
\begin{align*}
\tilde{\mathrm{II}}_{\alpha}
&\le2\Lambda^T(\Theta-\hat{\Theta})D^{-1}_{\alpha}(\Theta-\hat{\Theta})\Lambda+2\Lambda^T\Delta_{\alpha}\Theta_{\alpha}^{-1}(\Theta_{\alpha}-\hat{\Theta}_{\alpha})D^{-1}_{\alpha}(\Theta_{\alpha}-\hat{\Theta}_{\alpha})\Theta_{\alpha}^{-1}\Delta_{\alpha}\Lambda\\
& \le 2\mathrm{II}_{\alpha}+2\Lambda^T\Delta_{\alpha}D_{\alpha}^{-\frac{1}{2}}R_{\alpha}^{-1}(R_{\alpha}-\hat{R}_{\alpha})^2R_{\alpha}^{-1}D_{\alpha}^{-\frac{1}{2}}\Delta_{\alpha}\Lambda\\
& \leq 2\mathrm{II}_{\alpha}+2(\rho(R_{\alpha}-\hat{R}_{\alpha}))^2\times  (\rho_{\min} (R))^{-2}  \times\Lambda^T\Delta_{\alpha}D_{\alpha}^{-1}\Delta_{\alpha}\Lambda\\
&\le2\mathrm{II}_{\alpha}+2\rho_{\min}(R)^{-2}\rho(R)^{2}\|D_{\alpha}^{-\frac{1}{2}}\Delta_{\alpha}\Lambda\|^2,
\end{align*}
where the last step results from \cref{lem:invertR}.
\end{proof}

\begin{lemma}\label{lem:TermIIRidge}
On the event $\mathcal{A}_\delta$, we have
\begin{align*}
\mathrm{II}_{\alpha}
& := \Lambda^T(\Theta-\hat{\Theta})D_\alpha^{-1}(\Theta-\hat{\Theta})\Lambda \\ 
& \le 2\frac{C_{\delta}}{c_{\delta}}\frac{\tau^2}{n}\left(\sum_{j=1}^{K}\sqrt{\Tr(\Sigma^{2j})}|\Lambda_j|\right)^2\\
&\quad+2\frac{\tau^2}{n} C_{\delta}^2\rho(R)\sum_{j=1}^{K}\frac{\rho(\Sigma)^{2j}}{\sigma^T\Sigma^{2j-1}\sigma+\alpha_j}\|\tilde{\Lambda}\|^2+2\frac{C_{\delta}^2}{c_{\delta}}\rho(R)\sum_{j=1}^{K}\alpha_j\Lambda_j^2,
\end{align*}
with $\tilde \Lambda=D^{1/2}\Lambda$ (introduced in \cref{lem:termII}).
\end{lemma}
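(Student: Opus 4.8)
The plan is to follow the proof of \cref{lem:termII} almost verbatim, the only structural change being that the diagonal weights are now $1/(\sigma^T\Sigma^{2k-1}\sigma+\alpha_k)$ instead of $1/(\sigma^T\Sigma^{2k-1}\sigma)$, and that the role played there by \ref{ass:signal} is now played by the explicit definition of $\alpha$ in \eqref{eq:alphai}. First I would write $\mathrm{II}_\alpha$ in diagonal form,
$$\mathrm{II}_{\alpha}=\sum_{k=1}^{K}\frac{1}{\sigma^T\Sigma^{2k-1}\sigma+\alpha_k}\Big(\sum_{j=1}^{K}(\hat{\Theta}_{kj}-\Theta_{kj})\Lambda_j\Big)^2,$$
using that $D_{\alpha}$ has $k$-th diagonal entry $\sigma^T\Sigma^{2k-1}\sigma+\alpha_k$ and that $\hat{\Theta}_{kj}-\Theta_{kj}=\hat{\sigma}^T\Sigma^{k+j-1}\hat{\sigma}-\sigma^T\Sigma^{k+j-1}\sigma$. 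Applying \cref{Cor:Majoration3Termes} and splitting the two summands of its bound through $(a+b)^2\le 2a^2+2b^2$ then produces a trace part $\mathrm{II}_{\alpha}^{(1)}$ and a spectral part $\mathrm{II}_{\alpha}^{(2)}$.

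For $\mathrm{II}_{\alpha}^{(1)}$ I would use $\Tr(\Sigma^{j+k})\le\sqrt{\Tr(\Sigma^{2j})}\sqrt{\Tr(\Sigma^{2k})}$ to factor out $\big(\sum_j\sqrt{\Tr(\Sigma^{2j})}|\Lambda_j|\big)^2$, and then bound, for each $k$, the factor $\frac{\tau^2}{n}\Tr(\Sigma^{2k})/(\sigma^T\Sigma^{2k-1}\sigma+\alpha_k)\le \frac{\tau^2}{n}\Tr(\Sigma^{2k})/\alpha_k$. Since $\Tr(\Sigma^{2k})\le\rho(\Sigma)^k\Tr(\Sigma^k)$, the definition $\alpha_k=c_\delta K\frac{\tau^2}{n}\rho(\Sigma)^k\Tr(\Sigma^k)$ makes this at most $1/(c_\delta K)$; summing over the $K$ indices gives the first term of the lemma. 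This is exactly the mechanism by which \ref{ass:signal} was exploited in \cref{lem:termII}, with $t_{\delta,R}$ now replaced by $c_\delta$.

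The delicate step is $\mathrm{II}_{\alpha}^{(2)}$, which must produce both of the remaining target terms at once. Here I would apply Cauchy--Schwarz in the $j$-sum with the weight $\sigma^T\Sigma^{2j-1}\sigma+\alpha_j$, splitting each summand as $\big(\rho(\Sigma)^{(j+k)/2}\sqrt{\sigma^T\Sigma^{j+k-1}\sigma}\,/\sqrt{\sigma^T\Sigma^{2j-1}\sigma+\alpha_j}\big)\cdot\big(\sqrt{\sigma^T\Sigma^{2j-1}\sigma+\alpha_j}\,|\Lambda_j|\big)$. This factors off $\sum_l(\sigma^T\Sigma^{2l-1}\sigma+\alpha_l)\Lambda_l^2=\|\tilde{\Lambda}\|^2+\sum_l\alpha_l\Lambda_l^2$ and leaves the double sum $S=\sum_{j,k}\rho(\Sigma)^{j+k}\sigma^T\Sigma^{j+k-1}\sigma/[(\sigma^T\Sigma^{2j-1}\sigma+\alpha_j)(\sigma^T\Sigma^{2k-1}\sigma+\alpha_k)]$. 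Writing $\sigma^T\Sigma^{j+k-1}\sigma=R_{jk}\sqrt{\sigma^T\Sigma^{2j-1}\sigma}\sqrt{\sigma^T\Sigma^{2k-1}\sigma}$ identifies $S=v^TRv$ with $v_j=\rho(\Sigma)^j\sqrt{\sigma^T\Sigma^{2j-1}\sigma}/(\sigma^T\Sigma^{2j-1}\sigma+\alpha_j)$, whence $S\le\rho(R)\|v\|^2\le\rho(R)\sum_j\rho(\Sigma)^{2j}/(\sigma^T\Sigma^{2j-1}\sigma+\alpha_j)$. Distributing $S\cdot(\|\tilde{\Lambda}\|^2+\sum_l\alpha_l\Lambda_l^2)$ reproduces the $\|\tilde{\Lambda}\|^2$-term verbatim; for the $\sum_l\alpha_l\Lambda_l^2$-term I would close with $\frac{\tau^2}{n}\sum_j\rho(\Sigma)^{2j}/(\sigma^T\Sigma^{2j-1}\sigma+\alpha_j)\le 1/c_\delta$, which follows from $\sigma^T\Sigma^{2j-1}\sigma+\alpha_j\ge\alpha_j$ and $\rho(\Sigma)^j\le\Tr(\Sigma^j)$.

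I expect the main obstacle to be precisely this last step: one must choose the Cauchy--Schwarz weight as $\sigma^T\Sigma^{2j-1}\sigma+\alpha_j$ (rather than $\sigma^T\Sigma^{2j-1}\sigma$ as in \cref{lem:termII}) so that the two inertia contributions $\|\tilde{\Lambda}\|^2$ and $\sum_l\alpha_l\Lambda_l^2$ emerge together, and one must recognize the resulting double sum as a genuine quadratic form $v^TRv$ in the correlation matrix $R$, so that $\rho(R)$ can be extracted cleanly. Once the weight is fixed, the remaining bounds are routine consequences of the definition of $\alpha$ and of $\rho(\Sigma)^j\le\Tr(\Sigma^j)$.
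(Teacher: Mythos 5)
Your proposal is correct and follows essentially the same route as the paper's own proof: the same diagonal expansion of $\mathrm{II}_\alpha$, the same application of \cref{Cor:Majoration3Termes} with the $(a+b)^2\le 2a^2+2b^2$ split, the same Cauchy--Schwarz weighting by $\sigma^T\Sigma^{2j-1}\sigma+\alpha_j$ that factors off $\|\tilde{\Lambda}\|^2+\sum_l\alpha_l\Lambda_l^2$, and the same use of the definition of $\alpha$ together with $\rho(\Sigma)^j\le\Tr(\Sigma^j)$ to close. The only (harmless) variation is that you identify the double sum exactly as $v^TRv$ and bound $\|v\|^2$ directly, whereas the paper bounds it by $v_\alpha^TR_\alpha v_\alpha$ and then invokes \cref{Lem:RhoMaxRalpha} to pass from $\rho(R_\alpha)$ to $\rho(R)$; both yield the same bound, and note that your derivation, like the paper's own proof, produces the constant $2C_\delta^2/c_\delta$ in the first term rather than the $2C_\delta/c_\delta$ written in the lemma statement.
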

\begin{proof}
On the event $\mathcal{A}_\delta$, using \cref{Cor:Majoration3Termes}, we get
\begin{align*}
    \mathrm{II}_{\alpha}
    &=\sum_{k=1}^{K}\frac{1}{\sigma^T\Sigma^{2k-1}\sigma+\alpha_{k}}\left(\sum_{j=1}^{K}(\hat{\Theta}_{kj}-\Theta_{kj})\Lambda_{j}\right)^2\\
    &\le\sum_{k=1}^{K}\frac{1}{\sigma^T\Sigma^{2k-1}\sigma+\alpha_k}\left(\sum_{j=1}^{K}\left(C_{\delta}\frac{\tau^2}{n}\Tr(\Sigma^{j+k})+C_{\delta}\sqrt{\frac{\tau^2}{n}}\rho(\Sigma)^{\frac{j+k}{2}}\sqrt{\sigma^T\Sigma^{j+k-1}\sigma}\right)|\Lambda_{j}|\right)^2\\
   &\le 2\,C_\delta^2\sum_{k=1}^{K}\Big(\sum_{j=1}^{K}\sqrt{\frac{\frac{\tau^2}{n}\Tr(\Sigma^{2k})}{\sigma^T\Sigma^{2k-1}\sigma+\alpha_k}}\sqrt{\frac{\tau^2}{n}\Tr(\Sigma^{2j})}|\Lambda_j|\Bigr)^2\\
   &+ 2\,C_{\delta}^2\sum_{k=1}^{K}\left(\sum_{j=1}^{K}\sqrt{\frac{\frac{\tau^2}{n}\rho(\Sigma)^{j+k}\sigma^T\Sigma^{j+k-1}\sigma}{\sigma^T\Sigma^{2k-1}\sigma+\alpha_k}}|\Lambda_{j}|\right)^2.
\end{align*}
Based on the definition of $\alpha$ in \eqref{eq:alphai}, we deduce that 
\begin{align*} 
\mathrm{II}_{\alpha}
    &\le 2\frac{C_\delta^2}{c_{\delta}}\left(\sum_{j=1}^{K}\sqrt{\frac{\tau^2}{n}\Tr(\Sigma^{2j})}|\Lambda_j|\right)^2 \\ 
    & \quad + 2{C_\delta^2}\sum_{k=1}^{K}\left(\sum_{j=1}^{K}\sqrt{\frac{\frac{\tau^2}{n}\rho(\Sigma)^{k}\sigma^T\Sigma^{j+k-1}\sigma}{\sigma^T\Sigma^{2k-1}\sigma+\alpha_k}}\sqrt{\frac{\frac{\tau^2}{n}\rho(\Sigma)^{j}}{\sigma^T\Sigma^{2j-1}\sigma+\alpha_j}}(D_\alpha^{1/2})_j|\Lambda_{j}|\right)^2.
\end{align*}
Then, applying Cauchy-Schwarz inequality on the second term, we obtain
\begin{align*}
\mathrm{II}_{\alpha}
    &\le 2\frac{C_\delta^2}{c_{\delta}}\left(\sum_{j=1}^{K}\sqrt{\frac{\tau^2}{n}\Tr(\Sigma^{2j})}|\Lambda_j|\right)^2\\ 
    &\quad +2C_{\delta}^2\sum_{k=1}^{K}\sum_{j=1}^{K} \frac{\tau^2}{n}\frac{\rho(\Sigma)^{k}\sigma^T\Sigma^{k+j-1}\sigma}{\sigma^T\Sigma^{2k-1}\sigma+\alpha_k}\frac{\rho(\Sigma)^{j}}{\sigma^T\Sigma^{2j-1}\sigma+\alpha_j}\|D_{\alpha}^{\frac{1}{2}}\Lambda\|^2.
\end{align*}
We consider in the following the vector $v_{\alpha}\in\RR^{K}$ defined as 
$$ (v_{\alpha})_j=\frac{\rho(\Sigma)^{j}}{\sqrt{\sigma^T\Sigma^{2j-1}\sigma+\alpha_j}} \quad \forall j\in\lbrace 1,\dots, K \rbrace.$$ 
Note that $\|D_{\alpha}^{\frac{1}{2}}\Lambda\|^2= \Lambda^T(D + \Delta_\alpha)\Lambda = \|\tilde{\Lambda}\|^2+\|\Delta_{\alpha}^{\frac{1}{2}}\Lambda\|^2$. Hence,
\begin{align*}
\mathrm{II}_{\alpha}
    &\le 2\frac{C_\delta^2}{c_{\delta}}\left(\sum_{j=1}^{K}\sqrt{\frac{\tau^2}{n}\Tr(\Sigma^{2j})}|\Lambda_j|\right)^2
    +2C_{\delta}^2\frac{\tau^2}{n}v_{\alpha}^TR_{\alpha}v_{\alpha}\bigl(\|\tilde{\Lambda}\|^2+\|\Delta_{\alpha}^{\frac{1}{2}}\Lambda\|^2\bigr)\\
    &\le 2\frac{C_\delta^2}{c_{\delta}}\left(\sum_{j=1}^{K}\sqrt{\frac{\tau^2}{n}\Tr(\Sigma^{2j})}|\Lambda_j|\right)^2 + 2C_{\delta}^{2}\frac{\tau^2}{n}\|v_{\alpha}\|^2\rho(R)\bigl(\|\tilde{\Lambda}\|^2+\|\Delta_{\alpha}^{\frac{1}{2}}\Lambda\|^2\bigr),
\end{align*}    
where we have used \cref{Lem:RhoMaxRalpha}.
Then, using the definition of $v_{\alpha}$ we have
\begin{align*}
\mathrm{II}_{\alpha}
    &\le2\frac{C_\delta^2}{c_{\delta}}\left(\sum_{j=1}^{K}\sqrt{\frac{\tau^2}{n}\Tr(\Sigma^{2j})}|\Lambda_j|\right)^2+ 2C_{\delta}^{2}\frac{\tau^2}{n}\sum_{j=1}^{K}\frac{\rho(\Sigma)^{2j}}{\sigma^T\Sigma^{2j-1}\sigma+\alpha_j}\times \rho(R)\bigl(\|\tilde{\Lambda}\|^2+\|\Delta_{\alpha}^{\frac{1}{2}}\Lambda\|^2\bigr)\\
    &\le2\frac{C_\delta^2}{c_{\delta}}\left(\sum_{j=1}^{K}\sqrt{\frac{\tau^2}{n}\Tr(\Sigma^{2j})}|\Lambda_j|\right)^2\\
    &\quad+2C_{\delta}^{2}\frac{\tau^2}{n}\sum_{j=1}^{K}\frac{\rho(\Sigma)^{2j}}{\sigma^T\Sigma^{2j-1}\sigma+\alpha_j}\times \rho(R)\|\tilde{\Lambda}\|^2+2C_{\delta}^2\rho(R)\frac{\tau^2}{n} \sum_{j=1}^{K}\frac{\rho(\Sigma)^{2j}}{\alpha_j}\|\Delta_{\alpha}^{\frac{1}{2}}\Lambda\|^2.
\end{align*}
We conclude the proof using the definition of $\alpha$ in \eqref{eq:alphai} in the first term.
\end{proof}

\subsubsection{Third term}

\begin{lemma}\label{lem:TermIIIalpha}
On the event $\mathcal{A}_\delta$ defined in  \cref{Prop:Majoration3Termes}, we have  
\begin{align*}
\tilde{\mathrm{III}}_{\alpha}
& := (\hat{\sigma}^T\hat{G}-\sigma^TG)\Theta_{\alpha}^{-1}\Theta\Theta_{\alpha}^{-1}(\hat{G}^T\hat{\sigma}-G^T\sigma) \le \frac{\rho(R)}{\rho_{\min}(R)^2}\mathrm{III}_{\alpha},
\end{align*}
with
\begin{align*}
\mathrm{III}_{\alpha}
& := (\hat{\sigma}^T\hat{G}-\sigma^TG)^TD_{\alpha}^{-1}(\hat{G}^T\hat{\sigma}-G^T\sigma) \\
& \le 2C_\delta \frac{\tau^2}{n}\left(\frac{1}{K\, c_{\delta}}\sum_{j=1}^K \Tr(\Sigma^j)\bar{\Lambda}_j+\sum_{j=1}^{K}\rho(\Sigma)^{j}\bar{\Lambda_{j}}\right),
\end{align*}
and where $\bar{\Lambda}$ is defined in \eqref{eq:defLbar}.
\end{lemma}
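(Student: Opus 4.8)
The plan is to establish the two displayed inequalities separately, following the blueprint of \cref{lem:termIII} but with the Ridge-penalized matrices $\Theta_{\alpha}$ and $D_{\alpha}$ in place of $\Theta$ and $D$; the conceptual point is that the penalty levels $\alpha_i$ will play exactly the role devoted to \ref{ass:signal} in the unregularized proof. Throughout I write $w=\hat{G}^T\hat{\sigma}-G^T\sigma\in\RR^K$, so that $\tilde{\mathrm{III}}_{\alpha}=w^T\Theta_{\alpha}^{-1}\Theta\Theta_{\alpha}^{-1}w$ and $\mathrm{III}_{\alpha}=w^TD_{\alpha}^{-1}w$.

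For the first inequality I would exploit the factorizations $\Theta=D^{\frac{1}{2}}RD^{\frac{1}{2}}$ and $\Theta_{\alpha}=D_{\alpha}^{\frac{1}{2}}R_{\alpha}D_{\alpha}^{\frac{1}{2}}$. Since $R\preceq\rho(R)I$ and $D\preceq D_{\alpha}$ (because every $\alpha_i>0$), one gets $\Theta\preceq\rho(R)D\preceq\rho(R)D_{\alpha}$, whence
\begin{equation*}
w^T\Theta_{\alpha}^{-1}\Theta\Theta_{\alpha}^{-1}w\le\rho(R)\,w^T\Theta_{\alpha}^{-1}D_{\alpha}\Theta_{\alpha}^{-1}w=\rho(R)\,w^TD_{\alpha}^{-\frac{1}{2}}R_{\alpha}^{-2}D_{\alpha}^{-\frac{1}{2}}w.
\end{equation*}
Bounding $R_{\alpha}^{-2}\preceq\rho_{\min}(R_{\alpha})^{-2}I$ and invoking \cref{Lem:RhoMinAlpha1} to replace $\rho_{\min}(R_{\alpha})$ by the smaller $\rho_{\min}(R)$ then yields $\tilde{\mathrm{III}}_{\alpha}\le\frac{\rho(R)}{\rho_{\min}(R)^2}\,w^TD_{\alpha}^{-1}w$, which is exactly $\frac{\rho(R)}{\rho_{\min}(R)^2}\mathrm{III}_{\alpha}$.

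For the second inequality I would first write $\mathrm{III}_{\alpha}$ as the diagonal sum $\sum_{i=1}^K(\hat{\sigma}^T\Sigma^{i-1}\hat{\sigma}-\sigma^T\Sigma^{i-1}\sigma)^2/(\sigma^T\Sigma^{2i-1}\sigma+\alpha_i)$, apply \cref{Cor:Majoration3Termes} to each numerator, and split the resulting square through $(a+b)^2\le 2a^2+2b^2$. For the contribution carrying $\frac{\tau^2}{n}\Tr(\Sigma^i)$ I discard $\sigma^T\Sigma^{2i-1}\sigma$ and retain only $\alpha_i=c_{\delta}K\frac{\tau^2}{n}\rho(\Sigma)^i\Tr(\Sigma^i)$ in the denominator, which cancels one power of $\Tr(\Sigma^i)$ and leaves a factor $1/\rho(\Sigma)^i$; for the contribution carrying $\rho(\Sigma)^{i/2}\sqrt{\sigma^T\Sigma^{i-1}\sigma}$ I instead keep $\sigma^T\Sigma^{2i-1}\sigma$ in the denominator. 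Applying the elementary inequality \eqref{eq:ratioS}, namely $1/\rho(\Sigma)^i\le\sigma^T\Sigma^{i-1}\sigma/\sigma^T\Sigma^{2i-1}\sigma=\bar{\Lambda}_i$, to the first term and recognizing $\bar{\Lambda}_i$ directly in the second converts both sums into the announced expression involving $\frac{1}{Kc_{\delta}}\sum_i\Tr(\Sigma^i)\bar{\Lambda}_i$ and $\sum_i\rho(\Sigma)^i\bar{\Lambda}_i$, the prefactor being determined by the constant $C_{\delta}$ of \cref{Cor:Majoration3Termes}.

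The second inequality is essentially routine, transcribing the argument of \cref{lem:termIII} with $D_{\alpha}$ and the penalty levels $\alpha_i$ substituted for $D$ and the lower bound furnished by \ref{ass:signal}. The genuine obstacle is the first inequality: the delicate point is to bound $\Theta$ by a multiple of $D_{\alpha}$ rather than of the identity, so that the diagonal weighting matches the $D_{\alpha}^{-1}$ appearing in $\mathrm{III}_{\alpha}$. It is this choice, combined with the monotonicity $D\preceq D_{\alpha}$ and with \cref{Lem:RhoMinAlpha1}, that makes the reduction collapse to exactly the prefactor $\rho(R)/\rho_{\min}(R)^2$ with no spurious dependence on the dimension $K$; any cruder bound on $\Theta_{\alpha}^{-1}\Theta\Theta_{\alpha}^{-1}$ would break the clean separation between the two displayed estimates.
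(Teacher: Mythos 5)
Your proposal is correct and follows essentially the same route as the paper: the first inequality uses the factorizations $\Theta=D^{1/2}RD^{1/2}$, $\Theta_{\alpha}^{-1}=D_{\alpha}^{-1/2}R_{\alpha}^{-1}D_{\alpha}^{-1/2}$, the fact that $DD_{\alpha}^{-1}$ has diagonal entries in $[0,1]$, and \cref{Lem:RhoMinAlpha1}, exactly as in the paper (you phrase it via the Loewner ordering $\Theta\preceq\rho(R)D\preceq\rho(R)D_{\alpha}$ where the paper writes $\tilde{\mathrm{III}}_{\alpha}$ as a squared norm and bounds operator norms, but the ingredients are identical); the second inequality reproduces the paper's term-by-term treatment of the diagonal sum, using $\alpha_j$ in the denominator for the trace part and $\sigma^T\Sigma^{2j-1}\sigma$ for the other, together with \eqref{eq:ratioS}.
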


\begin{proof}
We can write $\tilde\III_{\alpha}$ as 
\begin{align*}
\tilde\III_\alpha &= (\hat{G}^T\hat{\sigma}-G^T\sigma)^TD_\alpha^{-1/2}R_\alpha^{-1}D_\alpha^{-1/2}D^{1/2}RD^{1/2}D_\alpha^{-1/2}R_\alpha^{-1}D_\alpha^{-1/2}(\hat{G}^T\hat{\sigma}-G^T\sigma)\\
& = \left\lVert R^{1/2}D^{1/2}D_\alpha^{-1/2}R_\alpha^{-1}D_\alpha^{-1/2}(\hat{G}^T\hat{\sigma}-G^T\sigma)\right\rVert^2.
\end{align*}
Note that $DD_\alpha^{-1}$ is a diagonal matrix with entries in [0,1]. Hence, 
$$ \tilde\III_\alpha\leq  \frac{\rho(R)}{\rho_{\min}(R_\alpha)^2} (\hat{G}^T\hat{\sigma}-G^T\sigma)^TD_{\alpha}^{-1}(\hat{G}^T\hat{\sigma}-G^T\sigma).$$
The right hand side is equal to $\frac{\rho(R)}{\rho_{\min}(R_\alpha)^2}\III_\alpha$ which is bounded by $\frac{\rho(R)}{\rho_{\min}(R)^2}\III_\alpha$ by \cref{Lem:RhoMinAlpha1}.

Let us now study the term $\III_\alpha$.
Using \cref{Cor:Majoration3Termes}, on the set $\mathcal{A}_\delta$,
$$
\III_\alpha \le 2C_{\delta}\sum_{j=1}^{K}\frac{(\frac{\tau^2}{n})^2\Tr(\Sigma^j)^2+\frac{\tau^2}{n}\rho(\Sigma)^{j}\sigma^T\Sigma^{j-1}\sigma}{\sigma^T\Sigma^{2j-1}\sigma+\alpha_j}.$$
Then, using the expression of $\alpha$ in \eqref{eq:alphai} for the first term, and the definition of $\bar\Lambda$,
\begin{align*}
\III_\alpha 	&\le 2C_{\delta}\frac{\tau^2}{n}\sum_{j=1}^{K}\frac{\Tr(\Sigma^{j})}{K}\frac{K\frac{\tau^2}{n}\Tr(\Sigma^j)}{\alpha_j}+2C_{\delta}\frac{\tau^2}{n}\sum_{j=1}^{K}\rho(\Sigma)^{j}\bar{\Lambda}_{j}\\
	&\le 2\frac{C_{\delta}}{K\,c_{\delta}}\frac{\tau^2}{n}\sum_{j=1}^{K}\frac{\Tr(\Sigma^{j})}{\rho(\Sigma)^j}+2C_{\delta}\frac{\tau^2}{n}\sum_{j=1}^{K}\rho(\Sigma)^{j}\bar{\Lambda}_{j}\\
	&\le 2\frac{C_{\delta}}{K\,c_{\delta}}\frac{\tau^2}{n}\sum_{j=1}^{K}\bar{\Lambda}_{j}\Tr(\Sigma^{j})+2C_{\delta}\frac{\tau^2}{n}\sum_{j=1}^{K}\rho(\Sigma)^{j}\bar{\Lambda}_j,
\end{align*}
where last inequality results from \eqref{eq:ratioS}.
\cref{lem:TermIIIalpha} follows.
\end{proof}

\subsection[Proof of Theorem 4.1]{Proof of \cref{Th:2}}
\label{s:Th:2}

The introduction of a regularization matrix $\Delta_{\alpha}$ in the expression of $\hat\beta_{K,\alpha}$ (see \eqref{eq:betaridge}) induces a new bias. Indeed, introducing the parameter
\begin{equation}
\beta_\alpha = G\Theta_\alpha^{-1} G^T \sigma \quad \mathrm{with} \quad \Theta_\alpha = \Theta + \Delta_\alpha,
\label{eq:betaalpha}
\end{equation}
we obtain
$$ \beta - \hat\beta_{K,\alpha} = \beta - \bar \beta + \bar\beta - \beta_\alpha + \beta_\alpha - \hat\beta_{K,\alpha},$$
where $\bar\beta=G\Theta^{-1} G^T \sigma$ has been introduced in \eqref{Eq:barbeta}. This equality leads to 
\begin{align}
\frac{1}{n}\|X(\beta-\hat{\beta}_{K,\alpha})\|^2
&\le\frac{2}{n}\|X(\beta-\bar{\beta})\|^2+\frac{4}{n}\|X(\bar{\beta}-\beta_{\alpha})\|^2+\frac{4}{n}\|X(\beta_{\alpha}-\hat{\beta}_{K,\alpha})\|^2 \nonumber\\
& \leq \frac{2}{n}\inf_{v\in [G]}\|X(\beta-v)\|^2+\frac{4}{n}\|X(\bar{\beta}-\beta_{\alpha})\|^2+\frac{4}{n}\|X(\beta_{\alpha}-\hat{\beta}_{K,\alpha})\|^2.
\label{Eq:BiaisDecomp}
\end{align}
The first member of this inequality illustrates the distance - in terms of prediction - between the target $\beta$ and the Krylov space $[G]$: it exactly corresponds to the first term displayed in our bound. Te second represents the bias created by the addition of the regularization  term $\Delta_{\alpha}$, while the last one is related to the variance of the estimator.  Finally, we will focus on the last term to obtain an upper bound on the prediction.

Concerning the second term in the r.h.s. of \eqref{Eq:BiaisDecomp}, we have
\begin{align*}
		\frac{4}{n}\|X(\bar{\beta}-\beta_{\alpha})\|^2&\le4\sigma^TG(\Theta^{-1}-\Theta_{\alpha}^{-1})\Theta(\Theta^{-1}-\Theta_{\alpha}^{-1})G^T\sigma\\
		&\le 4 \sigma^TG\Theta^{-1}(\Theta-\Theta_{\alpha})\Theta_{\alpha}^{-1}\Theta\Theta_{\alpha}^{-1}(\Theta-\Theta_{\alpha})\Theta^{-1}G^T\sigma\\
		&\le 4\Lambda^T\Delta_{\alpha}\Theta_{\alpha}^{-1}(\Theta_{\alpha}-\Delta_{\alpha})\Theta_{\alpha}^{-1}\Delta_{\alpha}\Lambda\\
		&\le 4\Lambda^T\Delta_{\alpha}\Theta_{\alpha}^{-1}\Delta_{\alpha}\Lambda+4\Lambda^T\Delta_{\alpha}D_{\alpha}^{-\frac{1}{2}}R_{\alpha}^{-1}D_{\alpha}^{-\frac{1}{2}}\Delta_{\alpha}D_{\alpha}^{-\frac{1}{2}}R_{\alpha}^{-1}D_{\alpha}^{-\frac{1}{2}}\Delta_{\alpha}\Lambda\\
		&\le 4\rho_{\min}(R_{\alpha})^{-1}\|D_{\alpha}^{-\frac{1}{2}}\Delta_{\alpha}\Lambda\|^2+4\rho_{\min}(R_{\alpha})^{-2}\|D_{\alpha}^{-\frac{1}{2}}\Delta_{\alpha}\Lambda\|^2 \times \rho(D_\alpha^{-1/2} \Delta_\alpha D_{\alpha}^{-1/2}) \\
		& \leq 4(\rho_{\min}(R_{\alpha})^{-1} + \rho_{\min}(R_{\alpha})^{-2}) \|D_{\alpha}^{-\frac{1}{2}}\Delta_{\alpha}\Lambda\|^2.
\end{align*}
The last inequality comes from the fact that $D_\alpha^{-1/2} \Delta_\alpha D_{\alpha}^{-1/2}$ is a diagonal matrix with entries in [0,1], since $D_\alpha=D+\Delta_\alpha$. 
Then, remark that 
$$
\|D_{\alpha}^{-\frac{1}{2}}\Delta_{\alpha}\Lambda\|^2=\sum_{j=1}^{K}\frac{\alpha_j^2}{\sigma^T\Sigma^{2j-1}\sigma+\alpha_j}\Lambda_j^2\le\sum_{j=1}^{K}\alpha_j\Lambda_j^2.
$$
Using this result and \cref{Lem:RhoMinAlpha1}, we hence obtain 
\begin{align}
  \frac{4}{n}\|X(\bar{\beta}-\beta_{\alpha})\|^2 & \leq 4(\rho_{\min}(R_{\alpha})^{-1} + \rho_{\min}(R_{\alpha})^{-2}) \times \sum_{j=1}^{K}\alpha_j\Lambda_j^2\nonumber\\
  & \leq 4(\rho_{\min}(R)^{-1} + \rho_{\min}(R)^{-2}) \times \sum_{j=1}^{K}\alpha_j\Lambda_j^2.
\label{eq:BiasTerm}    
\end{align}

The remaining part of the proof is devoted to the control of the last term appearing in \eqref{Eq:BiaisDecomp}. We will use the following decomposition:
\begin{align*}
\hat\beta_{K,\alpha} - \beta_\alpha
& = 	\hat{G}\hat{\Theta}^{-1}_{\alpha}\hat{G}^T\hat{\sigma}-G\Theta^{-1}_{\alpha}G^T\sigma \\
&=(\hat{G}-G)\hat{\Theta}^{-1}_{\alpha}\hat{G}^T\hat{\sigma}+G(\hat{\Theta}^{-1}_{\alpha}\hat{G}^T\hat{\sigma}-\Theta^{-1}_{\alpha}G^T\sigma)\\
	&=(\hat{G}-G)\hat{\Theta}_{\alpha}^{-1}\hat{G}^T\hat{\sigma}+G(\hat{\Theta}^{-1}_{\alpha}-\Theta^{-1}_{\alpha})\hat{G}^T\hat{\sigma}+G\Theta^{-1}_{\alpha}(\hat{G}^T\hat{\sigma}-G^T\sigma).
\end{align*}
It yields
\begin{align*}
\lefteqn{\frac{1}{n} \| X(\hat\beta_{K,\alpha} - \beta_\alpha)\|^2 }  \\
    & = (\hat{\beta}_{\alpha}-\beta_{\alpha})^T\Sigma(\hat{\beta}_{\alpha}-\beta_{\alpha}) \\
    &\le 4\hat{\sigma}^T\hat{G}\hat{\Theta}^{-1}_{\alpha}(\hat{G}-G)^T\Sigma(\hat{G}-G)\hat{\Theta}^{-1}_{\alpha}\hat{G}^T\hat{\sigma} + 4 \hat{\sigma}^T\hat{G}(\hat{\Theta}^{-1}_{\alpha}-\Theta^{-1}_{\alpha})\Theta(\hat{\Theta}^{-1}_{\alpha}-\Theta^{-1}_{\alpha})\hat{G}^T\hat{\sigma}\\
    &\quad +2(\hat{\sigma}^T\hat{G}-\sigma^TG)\Theta_{\alpha}^{-1}\Theta\Theta^{-1}_{\alpha}(\hat{G}^T\hat{\sigma}-G^T\sigma)\\
    &\le4\underbrace{\hat{\sigma}^T\hat{G}\hat{\Theta}^{-1}_{\alpha}(\hat{G}-G)^T\Sigma(\hat{G}-G)\hat{\Theta}^{-1}_{\alpha}\hat{G}^T\hat{\sigma}}_{:=T_{1}^{\alpha}}+4\underbrace{\hat{\sigma}^T\hat{G}(\hat{\Theta}^{-1}_{\alpha}-\Theta^{-1}_{\alpha})\Theta(\hat{\Theta}^{-1}_{\alpha}-\Theta^{-1}_{\alpha})\hat{G}^T\hat{\sigma}}_{:=T_{2}^{\alpha}}+2\tilde{\III}_{\alpha},
\end{align*}
where the term $\tilde{\III}_{\alpha}$ is introduced in \cref{lem:TermIIIalpha}.
With \cref{lem:TermIIIalpha},
\begin{equation}\label{ineqPrincipalealpha}
    \frac{1}{n} \| X(\hat\beta_{K,\alpha} - \beta_\alpha)\|^2\leq T_1+T_2+2\frac{\rho(R)}{\rho_{\min}(R)^2}\III_\alpha.
\end{equation}

\subsubsection[Bound on the first term]{Bound on $T_1^{\alpha}$}
First consider the term $T_1^{\alpha}$ appearing in \eqref{ineqPrincipalealpha}. We decompose this term as follows: 
\begin{align}
   T_1^{\alpha}&=\hat{\sigma}^T\hat{G}\hat{\Theta}^{-1}_{\alpha}(\hat{G}-G)^T\Sigma(\hat{G}-G)\hat{\Theta}^{-1}_{\alpha}\hat{G}^T\hat{\sigma} \nonumber\\
    &\le 2\hat{\sigma}^T\hat{G}\Theta^{-1}_{\alpha}(\hat{G}-G)^T\Sigma(\hat{G}-G)\Theta^{-1}_{\alpha}\hat{G}^T\hat{\sigma} \nonumber \\
     & \quad +2\hat{\sigma}^T\hat{G}(\hat{\Theta}^{-1}_{\alpha}-\Theta^{-1}_{\alpha})(\hat{G}-G)\Sigma(\hat{G}-G)(\hat{\Theta}^{-1}_{\alpha}-\Theta^{-1}_{\alpha})\hat{G}^T\hat{\sigma}\nonumber \\
    &=:T_{11}^{\alpha}+T_{12}^{\alpha}.
    \label{eq:decompT1a}
\end{align}

First, we concentrate our attention on the term $T_{12}^{\alpha}$. We have
\begin{align*}
   \lefteqn{T_{12}^{\alpha}}\\
   &= 2\hat{\sigma}^T\hat{G}(\hat{\Theta}^{-1}_{\alpha}-\Theta^{-1}_{\alpha})(\hat{G}-G)^T\Sigma(\hat{G}-G)(\hat{\Theta}^{-1}_{\alpha}-\Theta^{-1}_{\alpha})\hat{G}^T\hat{\sigma}\\
    &= 2\hat{\sigma}^T\hat{G}\Theta^{-1}_{\alpha}(\hat{\Theta}_{\alpha}-\Theta_{\alpha})\hat{\Theta}^{-1}_{\alpha}(\hat{G}-G)\Sigma(\hat{G}-G)\hat{\Theta}^{-1}_{\alpha}(\hat{\Theta}_{\alpha}-\Theta_{\alpha})\Theta^{-1}_{\alpha}\hat{G}^T\hat{\sigma}\\
    &= 2\hat{\sigma}^T\hat{G}\Theta^{-1}_{\alpha}(\hat{\Theta}_{\alpha}-\Theta_{\alpha})D^{-\frac{1}{2}}_{\alpha}\hat{R}_{\alpha}^{-1}\big(D^{-\frac{1}{2}}_{\alpha}(\hat{G}-G)^T\Sigma(\hat{G}-G)D^{-\frac{1}{2}}_{\alpha}\big)\hat{R}_{\alpha}^{-1}D^{-\frac{1}{2}}_{\alpha}(\hat{\Theta}_{\alpha}-\Theta_{\alpha})\Theta^{-1}_{\alpha}\hat{G}^T\hat{\sigma}\\
    &\le \frac{8}{\rho_{\min}(R)^2}\rho\bigg(D^{-\frac{1}{2}}_{\alpha}(\hat{G}-G)^T\Sigma(\hat{G}-G)D^{-\frac{1}{2}}_{\alpha}\bigg)\times \hat{\sigma}^T\hat{G}\Theta^{-1}_{\alpha}(\hat{\Theta}_{\alpha}-\Theta_{\alpha})D^{-1}_{\alpha}(\hat{\Theta}_{\alpha}-\Theta_{\alpha})\Theta^{-1}_{\alpha}\hat{G}^T\hat{\sigma}.
\end{align*}
where we have used \cref{lem:invertR}. Using \cref{eq:inequA} and \cref{Cor:Majoration3Termes}, we can remark that on the event $\mathcal{A}_\delta$
\begin{align}
    \rho\bigg(D^{-\frac{1}{2}}_{\alpha}(\hat{G}-G)^T\Sigma(\hat{G}-G))D^{-\frac{1}{2}}_{\alpha}\bigg) 
    & \leq K \max_{i,j} \left\{\frac{(\hat\sigma-\sigma)^T\Sigma^{i+j-1}(\hat\sigma -\sigma)}{\sqrt{\sigma^T\Sigma^{2i-1}\sigma+\alpha_i}\sqrt{\sigma^T\Sigma^{2j-1}\sigma+\alpha_j}}\right\} \nonumber\\
    & \leq C_\delta K \max_{i,j} \left\{\frac{\frac{\tau^2}{n}\Tr(\Sigma^{i+j})}{\sqrt{\sigma^T\Sigma^{2i-1}\sigma+\alpha_i}\sqrt{\sigma^T\Sigma^{2j-1}\sigma+\alpha_j}}\right\} \nonumber\\
    & \leq\frac{C_{\delta}}{c_{\delta}}, 
    \label{eq:1controlRho}
\end{align}
where the final step results from the definition of $\alpha$ (see \eqref{eq:alphai}).
Hence, 
$$
T_{12}^{\alpha}
\le \frac{8}{\rho_{\min}(R)^2}\frac{C_{\delta}}{c_{\delta}}\ \hat{\sigma}^T\hat{G}\Theta^{-1}_{\alpha}(\hat{\Theta}_{\alpha}-\Theta_{\alpha})D^{-1}_{\alpha}(\hat{\Theta}_{\alpha}-\Theta_{\alpha})\Theta^{-1}_{\alpha}\hat{G}^T\hat{\sigma}.$$
Then,
\begin{align*}
T_{12}^{\alpha}
    &\le \frac{16}{\rho_{\min}(R)^2}\frac{C_{\delta}}{c_{\delta}}(\hat{\sigma}^T\hat{G}-\sigma^TG)\Theta^{-1}_{\alpha}(\hat{\Theta}_{\alpha}-\Theta_{\alpha})D^{-1}_{\alpha}(\hat{\Theta}_{\alpha}-\Theta_{\alpha})\Theta^{-1}_{\alpha}(\hat{G}^T\hat{\sigma}-G^T\sigma)\\
    &\quad+\frac{16}{\rho_{\min}(R)^2}\frac{C_{\delta}}{c_{\delta}}\sigma^TG\Theta^{-1}_{\alpha}(\hat{\Theta}_{\alpha}-\Theta_{\alpha})D^{-1}_{\alpha}(\hat{\Theta}_{\alpha}-\Theta_{\alpha})\Theta^{-1}_{\alpha}G^T\sigma\\
    &\leq \frac{16}{\rho_{\min}(R)^2}\frac{C_{\delta}}{c_{\delta}}(\hat{\sigma}^T\hat{G}-\sigma^TG)D^{-\frac{1}{2}}_{\alpha}R^{-1}_{\alpha}(\hat{R}_{\alpha}-R_{\alpha})^{2}R^{-1}_{\alpha}D^{-\frac{1}{2}}_{\alpha}(\hat{G}^T\hat{\sigma}-G^T\sigma)\\
    &\quad+\frac{16}{\rho_{\min}(R)^2}\frac{C_{\delta}}{c_{\delta}}\sigma^TG\Theta^{-1}_{\alpha}(\hat{\Theta}_{\alpha}-\Theta_{\alpha})D^{-1}_{\alpha}(\hat{\Theta}_{\alpha}-\Theta_{\alpha})\Theta^{-1}_{\alpha}G^T\sigma.
\end{align*}
Using \cref{lem:invertR}, we obtain
\begin{align}
    T_{12}^{\alpha} & \le 16\rho_{\min}(R)^{-4}\rho(R)^{2}\frac{C_{\delta}}{c_{\delta}}(\hat{\sigma}^T\hat{G}-\sigma^TG)D^{-1}_{\alpha}(\hat{G}^T\hat{\sigma}-G^T\sigma)\nonumber \\
    &\quad + \frac{16}{\rho_{\min}(R)^2}\frac{C_{\delta}}{c_{\delta}}\sigma^TG\Theta^{-1}_{\alpha}(\hat{\Theta}_{\alpha}-\Theta_{\alpha})D^{-1}_{\alpha}(\hat{\Theta}_{\alpha}-\Theta_{\alpha})\Theta^{-1}_{\alpha}G^T\sigma \nonumber\\
    &\leq  \frac{16\rho(R)^2}{\rho_{\min}(R)^{4}}\frac{C_\delta}{c_{\delta}}\III_{\alpha}+\frac{16}{\rho_{\min}(R)^2}\frac{C_\delta}{c_{\delta}}\tilde{\II}_{\alpha},
    \label{eq:contT12}
\end{align}
where the terms $\III_{\alpha}$ and $\tilde{\II}_{\alpha}$ have been respectively introduced in  \cref{lem:TermIIIalpha} and \cref{lem:TermIIalpha}.

Now, we propose a bound for the term $T_{11}^{\alpha}$. First,
\begin{align*}
    T_{11}^{\alpha} & = 2\hat{\sigma}^T\hat{G}\Theta^{-1}_{\alpha}(\hat{G}-G)^T\Sigma(\hat{G}-G)\Theta^{-1}_{\alpha}\hat{G}^T\hat{\sigma}\\
    &\le4\sigma^TG\Theta^{-1}_{\alpha}(\hat{G}-G)^T\Sigma(\hat{G}-G)\Theta^{-1}_{\alpha}G^T\sigma\\
    &\quad+4(\hat{\sigma}^T\hat{G}-\sigma^TG)\Theta^{-1}_{\alpha}(\hat{G}-G)^T\Sigma(\hat{G}-G)\Theta^{-1}_{\alpha}(\hat{G}^T\hat{\sigma}-G^T\sigma)\\
    & = 4\I_{\alpha}
    +4(\hat{\sigma}^T\hat{G}-\sigma^TG)D^{-\frac{1}{2}}_{\alpha}R^{-1}_{\alpha}\bigg(D^{-\frac{1}{2}}_{\alpha}(\hat{G}-G)^T\Sigma(\hat{G}-G)D^{-\frac{1}{2}}_{\alpha}\bigg)R^{-1}_{\alpha}D^{-\frac{1}{2}}_{\alpha}(\hat{G}^T\hat{\sigma}-G^T\sigma),
\end{align*}
where the term $\I_{\alpha}$ has been introduced in \cref{lem:TermIalpha}. Using \eqref{eq:1controlRho}, we obtain
\begin{align}
T_{11}^\alpha
    &\le4\I_{\alpha}+4(\hat{\sigma}^T\hat{G}-\sigma^TG)D^{-\frac{1}{2}}_{\alpha}R^{-1}_{\alpha}R^{-1}_{\alpha}D^{-\frac{1}{2}}_{\alpha}(\hat{G}^T\hat{\sigma}-G^T\sigma) \times \rho\bigg(D^{-\frac{1}{2}}_{\alpha}(\hat{G}-G)^T\Sigma(\hat{G}-G)D^{-\frac{1}{2}}_{\alpha}\bigg), \nonumber\\
    &\le4\I_{\alpha}
    +4\frac{C_{\delta}}{c_{\delta}}\rho_{\min}(R_{\alpha})^{-2}\times (\hat{\sigma}^T\hat{G}-\sigma^TG)D^{-1}_{\alpha}(\hat{G}^T\hat{\sigma}-G^T\sigma), \nonumber\\
    & \leq 4\I_{\alpha}+4\frac{C_{\delta}}{c_{\delta}}\rho_{\min}(R)^{-2}\mathrm{III}_{\alpha}.
    \label{eq:contT11}
\end{align}
where we have used \cref{Lem:RhoMinAlpha1}, and with the term $\mathrm{III}_{\alpha}$ has been introduced in \cref{lem:TermIIIalpha}.

Finally, we deduce from \eqref{eq:decompT1a}, \eqref{eq:contT12} and  \eqref{eq:contT11} that
\begin{equation} \label{ineq:T1alpha}
    T_1^{\alpha}\le 4\I_{\alpha}+\frac{16}{\rho_{\min}(R)^2}\frac{C_{\delta}}{c_{\delta}}\tilde{\II}_{\alpha}+4\frac{C_{\delta}}{c_{\delta}}\bigg(1+4\frac{\rho(R)^{2}}{\rho_{\min}(R)^{2}}\bigg)\rho_{\min}(R)^{-2}\mathrm{III}_{\alpha}.
\end{equation}

\subsubsection[Bound on the second term]{Bound on $T_{2}^{\alpha}$}
First,
\begin{align*}
    T_2^{\alpha}&={\hat{\sigma}^T\hat{G}(\hat{\Theta}^{-1}_{\alpha}-\Theta^{-1}_{\alpha})\Theta(\hat{\Theta}^{-1}_{\alpha}-\Theta^{-1}_{\alpha})\hat{G}^T\hat{\sigma}}\\
    &= \hat{\sigma}^T\hat{G}\Theta^{-1}_{\alpha}(\hat{\Theta}_{\alpha}-\Theta_{\alpha})\hat{\Theta}^{-1}_{\alpha}\Theta\hat{\Theta}^{-1}_{\alpha}(\hat{\Theta}_{\alpha}-\Theta_{\alpha})\Theta^{-1}_{\alpha}\hat{G}^T\hat{\sigma}\\
    &\le\rho(R)\hat{\sigma}^T\hat{G}\Theta_{\alpha}^{-1}(\hat{\Theta}_{\alpha}-\Theta_{\alpha})D_{\alpha}^{-\frac{1}{2}}\hat{R}_{\alpha}^{-1}D_{\alpha}^{-\frac{1}{2}}DD_{\alpha}^{-\frac{1}{2}}\hat{R}^{-1}_{\alpha}D_{\alpha}^{-\frac{1}{2}}(\hat{\Theta}_{\alpha}-\Theta)\Theta_{\alpha}^{-1}\hat{G}^T\hat{\sigma}\\
    &\le\rho(R)\rho(D_{\alpha}^{-\frac{1}{2}}DD_{\alpha}^{-\frac{1}{2}})\times \hat{\sigma}^T\hat{G}\Theta_{\alpha}^{-1}(\hat{\Theta}_{\alpha}-\Theta_{\alpha})D_{\alpha}^{-\frac{1}{2}}\hat{R}_{\alpha}^{-1}\hat{R}^{-1}_{\alpha}D_{\alpha}^{-\frac{1}{2}}(\hat{\Theta}_{\alpha}-\Theta)\Theta_{\alpha}^{-1}\hat{G}^T\hat{\sigma}\\
    &\le\rho(R)\times \hat{\sigma}^T\hat{G}\Theta^{-1}_{\alpha}(\hat{\Theta}_{\alpha}-\Theta_{\alpha})D^{-\frac{1}{2}}_{\alpha}\hat{R}_{\alpha}^{-2}D_{\alpha}^{-\frac{1}{2}}(\hat{\Theta}_{\alpha}-\Theta_{\alpha})\Theta^{-1}_{\alpha}\hat{G}^T\hat{\sigma}.
\end{align*}
Using \cref{lem:invertR}, we get
$$    T_2^{\alpha}\le\rho(R)\frac{4}{\rho_{\min}(R)^2}\times \hat{\sigma}^T\hat{G}\Theta^{-1}_{\alpha}(\hat{\Theta}_{\alpha}-\Theta_{\alpha})D^{-1}_{\alpha}(\hat{\Theta}_{\alpha}-\Theta_{\alpha})\Theta^{-1}_{\alpha}\hat{G}^T\hat{\sigma}.$$
Next, we introduce  the term $\tilde{\mathrm{II}}_{\alpha}$ defined in \cref{lem:TermIIRidge} as follows,
\begin{align*}
T_2^{\alpha}
    &\le\frac{8\rho(R)}{\rho_{\min}(R)^2}\tilde{\II}_{\alpha}+\frac{8\rho(R)}{\rho_{\min}(R)^2}(\hat{\sigma}^T\hat{G}-\sigma^TG)\Theta_{\alpha}^{-1}(\hat{\Theta}_{\alpha}-\Theta_{\alpha})D_{\alpha}^{-1}(\hat{\Theta}_{\alpha}-\Theta_{\alpha})\Theta_{\alpha}^{-1}(\hat{G}^T\hat{\sigma}-G^T\sigma)\\
    &\le \frac{8\rho(R)}{\rho_{\min}(R)^2}\tilde{\mathrm{II}}_{\alpha}+\frac{8\rho(R)}{\rho_{\min}(R)^2}(\hat{\sigma}^T\hat{G}-\sigma^TG)D_{\alpha}^{-\frac{1}{2}}R_{\alpha}^{-1}(\hat{R}_{\alpha}-R_{\alpha})^2R_{\alpha}^{-1}D_{\alpha}^{-\frac{1}{2}}(\hat{G}^T\hat{\sigma}-G^T\sigma).\\
\end{align*}
Using again \cref{lem:invertR},
 \begin{equation}
     T_2^{\alpha}\le \frac{8\rho(R)}{\rho_{\min}(R)^2}\tilde{\mathrm{II}}_{\alpha}+8\frac{\rho(R)^3}{\rho_{\min}(R)^{4}}\mathrm{III}_{\alpha},
    \label{ineq:T2alpha}
\end{equation}
where the term $\mathrm{III}_{\alpha}$ has been introduced in \cref{lem:TermIIIalpha}.

\subsubsection{End of the proof}
We consider in the following that we are on the event $\A_{\delta}$. Using \eqref{ineqPrincipalealpha}, \eqref{ineq:T1alpha} and \eqref{ineq:T2alpha} we obtain that
\begin{align*}
\frac{1}{n}\|X(\hat{\beta}_{K,\alpha}-\beta_{\alpha})\|^2&=(\hat{\beta}_{\alpha}-\beta_{\alpha})^T\Sigma(\hat{\beta}_{\alpha}-\beta_{\alpha})\\
&\le 16\,\I_{\alpha}+32\,\bigg(2\frac{C_{\delta}}{c_{\delta}}+{\rho(R)}\bigg)\rho_{\min}(R)^{-2}\tilde{\II}_{\alpha}\\
&\quad +\bigg(2{\rho(R)}+16\frac{C_{\delta}}{c_{\delta}}+64\frac{C_{\delta}}{c_{\delta}}\frac{\rho(R)^2}{\rho_{\min}(R)^2}+32\frac{\rho(R)^3}{\rho_{\min}(R)^2}\bigg)\rho_{\min}(R)^{-2}\III_{\alpha}.
\end{align*}
Using \cref{lem:TermIalpha}, \cref{lem:TermIIalpha} and \cref{lem:TermIIIalpha}, we get 
\begin{align}
    \lefteqn{\frac{1}{n}\|X(\hat{\beta}_{K,\alpha}-\beta_{\alpha})\|^2}\nonumber\\
    &\le 32\left(C_{\delta}+4\frac{C_\delta}{c_\delta}\big(2\frac{C_{\delta}}{c_{\delta}}+{\rho(R)}\big)\rho_{\min}(R)^{-2}\right)
    \frac{\tau^2}{n}\bigg(\sum_{i=1}^{K}\sqrt{\Tr(\Sigma^{2i})}|\Lambda_i|\bigg)^2\nonumber\\
    & \quad +\left(128\,C_{\delta}^2\rho(R)\bigg(2\frac{C_{\delta}}{c_{\delta}}+{\rho(R)}\bigg)\rho_{\min}(R)^{-2}\right)\|\tilde{\Lambda}\|^2\frac{\tau^2}{n}\sum_{j=1}^{K}\frac{\rho(\Sigma)^{2j}}{\sigma^T\Sigma^{2j-1}\sigma+\alpha_j}\nonumber\\
    & \quad +\left(16\frac{C_{\delta}}{c_{\delta}}+2{\rho(R)}+64\frac{C_{\delta}}{c_{\delta}}\frac{\rho(R)^2}{\rho_{\min}(R)^2}+32\frac{\rho(R)^3}{\rho_{\min}(R)^2}\right)\frac{2C_\delta}{\rho_{\min}(R_{\alpha})^{2}}\frac{\tau^2}{n}\sum_{j=1}^K \bar{\Lambda}_{j}\Big(\frac{\Tr(\Sigma^j)}{Kc_{\delta}}+\rho(\Sigma)^{j}\Big)\nonumber\\
    & \quad +\left(32\frac{C_{\delta}}{c_{\delta}}
    +64\frac{\rho(R)^{2}}{\rho_{\min}(R)^2}\big(2\frac{C_{\delta}}{c_{\delta}}+{\rho(R)}\big)\right)\rho_{\min}(R)^{-2}\|D_{\alpha}^{-\frac{1}{2}}\Delta_{\alpha}\Lambda\|^2\nonumber\\
    &  \quad + 128\frac{C_{\delta}^2}{c_{\delta}}\bigg(2\frac{C_{\delta}}{c_{\delta}}+{\rho(R)}\bigg)\frac{\rho(R)}{\rho_{\min}(R)^{2}}\sum_{j=1}^{K}\alpha_j\Lambda_j^2.\label{Eq:RidgeBound}
\end{align}
The two last line corresponds to the bias induced by the Ridge procedure.

Now, remark that
$$\frac{1}{K c_{\delta}}\sum_{j=1}^{K}\bar\Lambda_j\Tr(\Sigma^{j})+\sum_{j=1}^{K}\bar{\Lambda}_{j}\rho(\Sigma)^{j} \leq \left( \frac{1}{c_{\delta}}+1 \right) \| \bar{\Lambda}\| \left( \sum_{j=1}^K( \Tr (\Sigma^j))^2 \right)^{1/2}.$$
Moreover, with \eqref{Eq:BarL},
\begin{align*}
\| \bar \Lambda\|  \leq \Cond(D)^{1/2} \rho(R)\lVert {\Lambda}\rVert.
\end{align*}
In the same time, equation \eqref{Eq:TildeL} yields
$$\|\tilde{\Lambda}\|^2\sum_{j=1}^{K}\frac{\rho(\Sigma)^{2j}}{\sigma^T\Sigma^{2j-1}\sigma}\le\Cond(D)~\|\Lambda\|^2\sum_{j=1}^{K}\rho(\Sigma)^{2j}.$$

Plugging these three inequalities in the previous bound leads to 
\begin{align}  
{\frac{1}{n}\|X(\hat{\beta}_{K,\alpha}-\beta_{\alpha})\|^2}&\leq C_{\delta,R}^{(1)}\frac{\tau^2}{n}\Cond(D)\|\Lambda\|^2\sum_{i=1}^{K}\Tr(\Sigma^{2i}) \nonumber \\
&\quad + C_{\delta,R}^{(2)}\frac{\tau^2}{n}\Cond(D)^{\frac{1}{2}}\|\Lambda\|\bigg(\sum_{j=1}^{K}\Tr(\Sigma^{i})^{2}\bigg)^{\frac{1}{2}} + C_{\delta,R}^{(3)}\sum_{j=1}^{K}\alpha_j\Lambda_j^2,\label{Eq:Variance}
\end{align}
with 
$$C_{\delta,R}^{(1)}=32\left(C_{\delta}+4\frac{C_\delta}{c_\delta}\big(2\frac{C_{\delta}}{c_{\delta}}+{\rho(R)}\big)\rho_{\min}(R)^{-2}\right)+\left(128\,C_{\delta}^2\rho(R)\bigg(2\frac{C_{\delta}}{c_{\delta}}+{\rho(R)}\bigg)\rho_{\min}(R)^{-2}\right),$$
$$C_{\delta,R}^{(2)}=\left(16\frac{C_{\delta}}{c_{\delta}}+2{\rho(R)}+64\frac{C_{\delta}}{c_{\delta}}\frac{\rho(R)^2}{\rho_{\min}(R)^2}+32\frac{\rho(R)^3}{\rho_{\min}(R)^2}\right)\frac{2C_\delta}{\rho_{\min}(R)^{2}}\left( \frac{1}{c_{\delta}}+1 \right),$$
and 
$$C_{\delta,R}^{(3)}=128\frac{C_{\delta}^2}{c_{\delta}}\bigg(2\frac{C_{\delta}}{c_{\delta}}+{\rho(R)}\bigg)\frac{\rho(R)}{\rho_{\min}(R)^{2}}.$$

We can highlight the term $\Cond(R)^{4}$ in the constants $C_{\delta,R}^{(1)},C_{\delta,R}^{(2)}$ and $C_{\delta,R}^{(3)}$.

Using \eqref{Eq:BiaisDecomp}, \eqref{eq:BiasTerm} and \eqref{Eq:Variance}, we finally obtain 
\begin{align*}
    \frac{1}{n}\|X(\hat{\beta}_{K,\alpha}-\beta)\|^2 &\le \frac{2}{n}\inf_{v\in[G]}\|X(\beta-v)\|^2  +  C_{\delta,R}^{(1)}\frac{\tau^2}{n}\Cond(D)\|\Lambda\|^2\sum_{i=1}^{K}\Tr(\Sigma^{2i}) \\
    &\quad + C_{\delta,R}^{(2)}\frac{\tau^2}{n}\Cond(D)^{\frac{1}{2}}\|\Lambda\|\bigg(\sum_{j=1}^{K}\Tr(\Sigma^{i})^{2}\bigg)^{\frac{1}{2}} \\
    &\quad+ \Big(C_{\delta,R}^{(3)} +4(\rho_{\min}(R)^{-1}+\rho_{\min}(R)^{-2})\Big)\sum_{j=1}^{K}\alpha_j\Lambda_j^2.
\end{align*}
The proof can be concluded with the expression of $\alpha$ given in \eqref{eq:alphai}, with $D_{\delta,R}'$ such that
$$D_{\delta,R}'\geq\max\big(C_{\delta,R}^{(1)},C_{\delta,R}^{(2)},C_{\delta,R}^{(3)}+4(\rho_{\min}(R)^{-1}+\rho_{\min}(R)^{-2})\big).$$ Note that $\rho(R)\geq \Tr(R)/K=1$ and $\Cond(R)\geq 1$. Hence, it is straightforward that it is possible to consider $D_{\delta,R}'=c_\delta'\Cond(R)^4$ with well chosen $c_\delta'$.

\subsection{A more precise result for Ridge PLS estimator}
As the bound displayed in \cref{Th:1}, \cref{Th:2} has been simplified for the sake of clarity. We give a more precise result below.

\begin{theorem}\label[thm]{Th:4} 
Let $\delta \in (0,1)$. Suppose that \ref{ass:inverse} holds, and set 
$$\alpha_{i}=c_{\delta}K\frac{\tau^2}{n}\rho(\Sigma)^{i}\Tr(\Sigma^{i})\quad \forall 
i \in \lbrace1,...,K\rbrace,$$
Then, with a probability larger than $1-\delta$,
\begin{align*}
\frac{1}{n}\|X\hat{\beta}_{K,\alpha}-X\beta\|^2
& \le\frac{2}{n}\underset{v\in[G]}{\inf}\|X(\beta-v)\|^2
    +\mathcal{C}^{'(1)}_{\delta,R} \frac{\tau^2}{n} \left(\sum_{i=1}^{K}\sqrt{\Tr(\Sigma^{2i})}\lvert \Lambda_i\rvert\right)^2\\
& \quad +  \mathcal{C}^{'(2)}_{\delta,R}\|\tilde{\Lambda}\|^2 \frac{\tau^2}{n}\sum_{j=1}^{K}\frac{\rho(\Sigma)^{2j}}{\sigma^T\Sigma^{2j-1}\sigma}\\
 & \quad + \mathcal{C}^{'(3)}_{\delta,R} \frac{\tau^2}{n} \left(\frac{1}{K c_{\delta}}\sum_{i=1}^{K}\bar\Lambda_i\Tr(\Sigma^{i})+\sum_{i=1}^{K}\bar{\Lambda}_{i}\rho(\Sigma)^{i}\right)\\
 &  \quad + \C_{\delta,R}^{'(4)}\frac{\tau^2}{n}Kc_{\delta}\sum_{i=1}^{K}\bigg(\rho(\Sigma)^{i}\Tr(\Sigma^{i})\bigg)\Lambda_i^2.
 \end{align*}
 for some constants $\mathcal{C}^{'(j)}_{\delta,R}$, $j\in \lbrace 1,2,3,4 \rbrace$ depending only from $\delta$ and $R$.
 \end{theorem}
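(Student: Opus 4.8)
The plan is to retrace, step for step, the route already established in the proof of \cref{Th:2}, but to halt \emph{before} the three norm simplifications \eqref{Eq:SbarL}, \eqref{Eq:BarL} and \eqref{Eq:TildeL} are invoked, so that the sharper unsimplified estimate is preserved. Concretely, I would begin from the bias decomposition \eqref{Eq:BiaisDecomp}, which splits $\frac{1}{n}\|X(\beta-\hat\beta_{K,\alpha})\|^2$ into the approximation term $\frac{2}{n}\inf_{v\in[G]}\|X(\beta-v)\|^2$, the Ridge bias $\frac{4}{n}\|X(\bar\beta-\beta_\alpha)\|^2$, and the variance-type term $\frac{4}{n}\|X(\beta_\alpha-\hat\beta_{K,\alpha})\|^2$. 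The first of these is already in its final form and is carried over unchanged into the statement.

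Next I would bound the remaining two pieces with the intermediate results derived earlier. For the variance-type term, inequality \eqref{Eq:RidgeBound} supplies (up to the overall factor $4$ from \eqref{Eq:BiaisDecomp}) exactly five contributions: one proportional to $\frac{\tau^2}{n}(\sum_i\sqrt{\Tr(\Sigma^{2i})}|\Lambda_i|)^2$, one to $\|\tilde\Lambda\|^2\frac{\tau^2}{n}\sum_j\frac{\rho(\Sigma)^{2j}}{\sigma^T\Sigma^{2j-1}\sigma+\alpha_j}$, one to $\frac{\tau^2}{n}\sum_j\bar\Lambda_j(\frac{\Tr(\Sigma^j)}{Kc_\delta}+\rho(\Sigma)^j)$, and two of the form $\sum_j\alpha_j\Lambda_j^2$, the $\|D_\alpha^{-1/2}\Delta_\alpha\Lambda\|^2$ contribution being folded into this last family through $\|D_\alpha^{-1/2}\Delta_\alpha\Lambda\|^2\le\sum_j\alpha_j\Lambda_j^2$. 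For the Ridge bias I would invoke \eqref{eq:BiasTerm}, which contributes one further term of the same $\sum_j\alpha_j\Lambda_j^2$ type.

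It then remains to align each contribution with the four error terms of \cref{Th:4}. The first three match the first three terms of the statement directly, after two harmless monotonicity replacements: $\frac{1}{\sigma^T\Sigma^{2j-1}\sigma+\alpha_j}\le\frac{1}{\sigma^T\Sigma^{2j-1}\sigma}$ to clear $\alpha_j$ from the denominator of the second term, and $\rho_{\min}(R_\alpha)\ge\rho_{\min}(R)$ from \cref{Lem:RhoMinAlpha1} to replace $R_\alpha$ by $R$ in the prefactor $\frac{2C_\delta}{\rho_{\min}(R_\alpha)^2}$ of the third. The three separate $\sum_j\alpha_j\Lambda_j^2$ contributions (two from \eqref{Eq:RidgeBound}, one from \eqref{eq:BiasTerm}) are then pooled into a single term, and the definition $\alpha_i=c_\delta K\frac{\tau^2}{n}\rho(\Sigma)^i\Tr(\Sigma^i)$ of \eqref{eq:alphai} rewrites $\sum_j\alpha_j\Lambda_j^2=c_\delta K\frac{\tau^2}{n}\sum_i\rho(\Sigma)^i\Tr(\Sigma^i)\Lambda_i^2$, which is precisely the fourth term. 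Reading off the accumulated prefactors, all expressible through $C_\delta$, $c_\delta$, $\rho(R)$ and $\rho_{\min}(R)$, yields the constants $\mathcal{C}^{'(j)}_{\delta,R}$.

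There is no genuine obstacle: the entire analytic content is already packaged in \eqref{Eq:RidgeBound}, \eqref{eq:BiasTerm} and the spectral estimates of \cref{Lem:RhoMinAlpha1} and \cref{lem:invertR}, all valid on the event $\mathcal{A}_\delta$ of probability at least $1-\delta$. The only points demanding care are the tracking of the factor $4$ from \eqref{Eq:BiaisDecomp} and the correct pooling of the three $\sum_j\alpha_j\Lambda_j^2$ terms into $\mathcal{C}^{'(4)}_{\delta,R}$; as in \cref{Th:2}, one then checks that each $\mathcal{C}^{'(j)}_{\delta,R}$ is dominated by a constant multiple of $\Cond(R)^4$, so that the simplified \cref{Th:2} is recovered from \cref{Th:4} merely by applying \eqref{Eq:BarL} and \eqref{Eq:TildeL}.
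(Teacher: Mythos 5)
Your proposal is correct and follows exactly the paper's own route: the paper's proof of \cref{Th:4} is literally the one-line remark that the result follows from \eqref{Eq:BiaisDecomp} and \eqref{Eq:RidgeBound}, and your write-up simply makes explicit the bookkeeping that remark leaves implicit (the factor $4$, the bound \eqref{eq:BiasTerm} on the Ridge bias, the monotone replacements $\frac{1}{\sigma^T\Sigma^{2j-1}\sigma+\alpha_j}\le\frac{1}{\sigma^T\Sigma^{2j-1}\sigma}$ and $\rho_{\min}(R_\alpha)\ge\rho_{\min}(R)$, and the pooling of the three $\sum_j\alpha_j\Lambda_j^2$ contributions via \eqref{eq:alphai}). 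No gap to report.
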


\begin{proof}
The results follows from \eqref{Eq:BiaisDecomp} and \eqref{Eq:RidgeBound}.
\end{proof}



\printbibliography

\end{document}